\documentclass[12pt]{amsart}

\usepackage{amsfonts, amsthm}
\usepackage{enumitem}
\usepackage{indentfirst}
\usepackage{longtable}
\usepackage[all, cmtip]{xy}
\usepackage{graphicx}
\usepackage{epstopdf}
\usepackage{array}

\usepackage{hyperref}

\hypersetup{colorlinks,linkcolor=blue,citecolor=red,urlcolor=blue,pagebackref,hyperindex}

\usepackage{tikz}
\usepgflibrary{shapes.geometric}

\numberwithin{equation}{section}

\def\xh{\\[5pt]}

\def\Q{\mathbb{Q}}
\def\A{\mathcal{A}}
\def\D{\mathcal{D}}
\def\P{\mathcal{P}}
\def\Qcal{\mathcal{Q}}
\def\M{\mathcal{M}}
\def\L{\mathcal{L}}

\def\G{\mathcal{G}}

\def\vf{\mathfrak{v}}
\def\wf{\mathfrak{w}}
\def\bfa{{\bf a}}
\def\bfd{{\bf d}}
\def\lqv{n}

\def\QQ{\mathbf{Q}}

\def\dis{\displaystyle}

\renewcommand{\th}[1]{$#1^{\text{th}}$}

\makeatletter
\renewenvironment{proof}[1][\proofname]{\par
	\pushQED{\qed}%
	\normalfont \partopsep=\z@skip \topsep=\z@skip
	\trivlist
	\item[\hskip\labelsep
	\itshape
	#1\@addpunct{.}]\ignorespaces
}{%
\popQED\endtrivlist\@endpefalse
}
\makeatother

\tikzstyle{MyPic}=[draw=lightgray, inner sep=2pt]
\tikzstyle{SoThick}=[solid, very thick, line width=2pt, draw=black]
\tikzstyle{ThickDashed}=[densely dashed, line width=1pt, draw=black]
\tikzstyle{MyNode}=[circle, draw=black, fill=black, thick, minimum size=2mm]

\def\tsomepoints{
\coordinate (P00) at (0,0);
\coordinate (P01) at (0,1);
\coordinate (P02) at (0,2);
\coordinate (P03) at (0,3);
\coordinate (P04) at (0,4);
\coordinate (P05) at (0,5);

\coordinate (P10) at (1,0);
\coordinate (P11) at (1,1);
\coordinate (P12) at (1,2);
\coordinate (P13) at (1,3);
\coordinate (P14) at (1,4);
\coordinate (P15) at (1,5);

\coordinate (P20) at (2, 0);
\coordinate (P21) at (2, 1);
\coordinate (P22) at (2, 2);
\coordinate (P23) at (2, 3);
\coordinate (P24) at (2, 4);
\coordinate (P25) at (2, 5);

\coordinate (P30) at (3, 0);
\coordinate (P31) at (3, 1);
\coordinate (P32) at (3, 2);
\coordinate (P33) at (3, 3);
\coordinate (P34) at (3, 4);
\coordinate (P35) at (3, 5);

\coordinate (P40) at (4, 0);
\coordinate (P41) at (4, 1);
\coordinate (P42) at (4, 2);
\coordinate (P43) at (4, 3);
\coordinate (P44) at (4, 4);
\coordinate (P45) at (4, 5);

\coordinate (P50) at (5, 0);
\coordinate (P51) at (5, 1);
\coordinate (P52) at (5, 2);
\coordinate (P53) at (5, 3);
\coordinate (P54) at (5, 4);
\coordinate (P55) at (5, 5);

\coordinate (P60) at (6, 0);
\coordinate (P61) at (6, 1);
\coordinate (P62) at (6, 2);
\coordinate (P63) at (6, 3);
\coordinate (P64) at (6, 4);
\coordinate (P65) at (6, 5);

\coordinate (P70) at (7, 0);
\coordinate (P71) at (7, 1);
\coordinate (P72) at (7, 2);
\coordinate (P73) at (7, 3);
\coordinate (P74) at (7, 4);
\coordinate (P75) at (7, 5);

\coordinate (P80) at (8, 0);
\coordinate (P81) at (8, 1);
\coordinate (P82) at (8, 2);
\coordinate (P83) at (8, 3);
\coordinate (P84) at (8, 4);
\coordinate (P85) at (8, 5);

\coordinate (P90) at (9, 0);
\coordinate (P91) at (9, 1);
\coordinate (P1000) at (10, 0);
\coordinate (P1001) at (10, 1);
\coordinate (P1100) at (11, 0);
\coordinate (P1101) at (11, 1);
}

\theoremstyle{plain}

\newtheorem{theorem}{Theorem}[section]
\newtheorem{proposition}[theorem]{Proposition}
\newtheorem{lemma}[theorem]{Lemma}

\theoremstyle{definition}

\newtheorem{definition}[theorem]{Definition}
\newtheorem{example}[theorem]{Example}
\newtheorem{remark}[theorem]{Remark}

\newtheoremstyle{note}{0pt}{0pt}{}{}{\bfseries}{.}{5pt}{}
\theoremstyle{note}

\newtheorem{note}{Note}

\setlength{\textwidth}{6.5 in}
\setlength{\textheight}{8.2 in}
\setlength{\evensidemargin}{0 in}
\setlength{\oddsidemargin}{0 in}

\setlength{\parskip}{4pt}

\begin{document}
	
\title{New combinatorial formulas for cluster monomials of type $A$ quivers}

\author{Kyungyong Lee}
\address{Department of Mathematics \\
 University of Nebraska--Lincoln \\
 Lincoln, NE 68588, USA \\
and Korea Institute for Advanced Study \\
Seoul 02455, Republic of Korea}
\email{klee24@unl.edu; klee1@kias.re.kr}
\thanks{The first author was supported by the Korea Institute for
Advanced Study (KIAS), the AMS Centennial Fellowship, NSA grant
H98230-14-1-0323, and the University of Nebraska--Lincoln.}

\author{Li Li}
\address{Department of Mathematics and Statistics \\
 Oakland University \\
 Rochester, MI 48309}
\email{li2345@oakland.edu}
\thanks{The second author was partially supported by the
Oakland University URC Faculty Research Fellowship Award, and NSA grant H98230-16-1-0303.}

\author{Ba Nguyen}
\address{Department of Mathematics, Wayne State University, Detroit, MI 48202}
\email{ba.nguyen@wayne.edu}

\begin{abstract}
Lots of research focuses on the combinatorics behind various bases of cluster algebras.
This paper studies the natural basis of a type $A$ cluster algebra, which consists of all cluster monomials. We introduce a new kind of combinatorial formulas for the cluster monomials in terms of the so-called globally compatible collections. We give bijective proofs of these formulas by comparing with the well-known combinatorial models of the $T$-paths and of the perfect matchings in a snake diagram. For cluster variables of a type $A$ cluster algebra, we give a bijection that relates our new formula with the theta functions constructed by Gross, Hacking, Keel and Kontsevich.
\end{abstract}

\maketitle


\section{Introduction}

Cluster algebras were first introduced by S. Fomin and A. Zelevinsky in \cite{FZ1} to design an algebraic framework for understanding total positivity and canonical bases for quantum groups. 
A cluster algebra is a subring of a rational function field generated by a distinguished set of Laurent polynomials called cluster variables. The long-standing Positivity Conjecture, now proved in \cite{LS} and \cite{GHKK}, asserts that  the coefficients in the Laurent expansion of any cluster variable with
respect to any fixed cluster are positive integers.
From the combinatorial point of view, the Positivity Conjecture suggests that these coefficients should count some combinatorial objects.
Lots of research focuses on building such combinatorial models. We give a brief summary of the pros and cons of four such models.
\begin{itemize}[leftmargin=20pt] \itemsep=5pt
\item \textbf{$T$-paths:} In \cite{S1}, Schiffler (independently Carroll--Price and Fomin--Zelevinsky in
their unpublished work) obtained a formula for the cluster variables of a cluster algebra of finite type $A$ (see \S2 for the definition)
in terms of $T$-paths. This formula has been modified and generalized to cluster algebras coming from surfaces \cite{MS, MSW, ST, S2, GM}. The formula is computation-friendly, but it does not seem to generalize to cluster algebras not coming from surfaces.

\item
\textbf{Perfect matchings of a snake diagram:} A description that is similar to the $T$-path model but has a more graph-theoretic flavor \cite{MS}. Interesting combinatorics, for example the snake graph calculus \cite{CS1,CS2}, arises in the study of this model. This formula is simply bijective to the $T$-path formula, but uses more classical graph-theoretical notion ``perfect matching'' and is easier to compute; like the $T$-path formula, it is also restricted to cluster algebras coming from surfaces.

\item
\textbf{Compatible pairs in a Dyck path:} In \cite{ls-comm}, the cluster variables of rank 2 quivers are described in terms of Dyck paths. A more general construction of the so-called compatible pairs is used in the study of greedy bases in \cite{LLZ}, and another generalization called GCC is used in \cite{B9, LLM}. This formula is computation-friendly and easy to implement,  and it applies to rank 2 cluster algebras that do not necessarily come from surfaces. Meanwhile, the main drawback is that we could not yet find a generalization that gives the cluster variables for higher rank non-type-$A$  cluster algebras. The combinatorics is quite different than the $T$-paths and perfect matchings, but we shall give a bijection in this paper showing that they indeed coincide in the type $A$ case (Theorem \ref{PMsGCCs} and Theorem \ref{TpathsGCCs}).

\item
\textbf{Broken lines and Theta functions:} Discovered in \cite{GHKK}, they are the most general combinatorial models so far. They are so powerful that can be used to proved several well-known conjectures including the positivity conjecture. On the other hand, they are mainly of theoretical importance but do not yet give a satisfying combinatorial model (at least not in the sense of the previous three models): for example, the finiteness of the number of broken lines is not immediate from the definition, and  it is difficult to implement even for rank 2 cluster algebras.
\end{itemize}

Our ultimate goal is to find a combinatorial model that is both general and effective in computation. Even though this goal appears out of reach for now, we feel that the model of maximal Dyck paths and compatible pairs has the potential to be generalized. This motivates the main goal of this paper:

{\it For a type $A$ quiver, give a new formula for the cluster monomials using a combinatorial model similar to compatible pairs, and find the bijections to other known models.}

We reach this goal by proving three equivalent formulas.

-- In Theorem \ref{thm:01sequence}, we give a formula using a sequence of 0-1 sequences called a GCS (globally compatible sequence), where each vertex of the quiver is assigned a 0-1 sequence satisfying a certain compatibility condition.

-- In Theorem \ref{thm:GCCxa}, we give a formula using globally compatible collections (GCCs) in Dyck paths. This formula has a similar flavor to the combinatorial formula for greedy bases in \cite{LLZ}.

-- In \S\ref{dvecpipe}, we first use a combinatorial gadget called pipelines to decompose the \textbf{d}-vector of a cluster monomial into the ones of cluster variables, then give a formula for cluster variables using GCCs in Theorem \ref{MainThm}.

We would like to point that that the above results are extending the results on the equioriented type $A$ quivers given in \cite{B9}.

Moreover, for cluster variables of a type $A$ quiver, we construct a bijection between GCSs (which is equivalent to GCCs) and broken lines in Theorem \ref{MainThm2} (the even rank case) and \ref{MainThm2'} (the general case), which relates our new formula with the theta functions constructed in \cite{GHKK}. The simplicity of this bijection came as a surprise for us: namely, under our setting, the $i$-th number in a GCS (which is a 0-1 sequence) is 0 if and only if the corresponding broken line bends at the $i$-th coordinate hyperplane ${\bf e}_i^\perp$.  This suggests that there could be further connections between our new combinatorial formulas and theta functions, and thus could provide a new approach to understanding broken lines (which are difficult to describe explicitly in general); however, as explained in Remark \ref{conditions not needed}, right now we are only able to construct a bijection for cluster variables of a type $A$ quiver because the broken lines are special in this case.

The paper is organized as follows. In \S 2 we recall the definition of cluster algebra and some facts about type $A$ quivers. In \S 3 we define the  \textbf{d}-vector of a cluster monomial and introduce its decomposition using pipelines. \S4    consists of the statements of the main results of the paper. In \S5 we prove the GCC formula for cluster variables (Theorem \ref{MainThm}) by establishing a bijection from GCCs to perfect matchings. Then in \S 6 we give the proof of the other main results of \S4. In \S7 we give the bijection between GCSs and broken lines. Then we give some examples in \S8. In the appendix, we give another proof of Theorem \ref{MainThm} using $T$-paths.

\noindent\emph{Acknowledgement.} We are grateful to Ralf Schiffler for valuable correspondences, and to Man Wai Cheung, Mark Gross and Greg Muller for very helpful discussion on scattering diagrams and broken lines. We are grateful to the anonymous referees for carefully reading through the manuscript and giving us many constructive suggestions to improve the presentation.

\section{Background on cluster algebras and type $A$ quivers}
In this section, we recall some definitions and fix notations about quivers and skew-symmetric cluster algebras (\S2.1) and some special type $A$ quivers (\S2.2).

\subsection{Quivers and skew-symmetric cluster algebras}\label{subsection:quiver def}
Recall that a finite oriented graph is a quadruple $Q=(Q_0,Q_1,h,t)$ formed by a finite set of vertices $Q_0$, a finite set of arrows $Q_1$ and two maps $h$ and $t$ from $Q_1$ to $Q_0$ which send an arrow $\alpha$ respectively to its head $h(\alpha)$ and its tail $t(\alpha)$. An arrow $\alpha$ whose head and tail coincide is a \emph{loop}; a \emph{$2$-cycle} is a pair of distinct arrows $\beta$ and $\gamma$ such that $h(\beta)=t(\gamma)$ and $t(\beta)=h(\gamma)$. Similarly, it is clear how to define $n$-cycles for $n\ge3$. A vertex is a \emph{source} (respectively a \emph{sink}) if it is not the head (resp.~the tail) of any arrow.

In this paper, a \emph{quiver} is a finite oriented graph without loops or 2-cycles.

Given a quiver $Q$ and a vertex $v\in Q_0$, the mutation $\mu_v(Q)$ is the new quiver $Q'$ obtained as follows:
\vspace{-5pt}
\begin{enumerate} \itemsep=5pt
\item For every path of the form $u\to v\to w$, add a new arrow from $u$ to $w$.
\item Reverse all arrows incident to $v$.
\item Remove all 2-cycles.
\end{enumerate}

\smallskip

Let $Q=(Q_0,Q_1,h,t)$ be a quiver. Let $Q_0=\{v_1,\ldots,v_n\}=\{1,\dots,n\}$ (for simplicity, we denote $v_i$ by $i$ in this paper if no confusion arises; and later we also use notation $I=I_{\rm uf}=Q_0$ to denote the same set). Let $F=\Q(x_1,\ldots,x_n)$ be the field of rational functions in $x_1,x_2,\ldots,x_n$ with rational coefficients.  A \emph{seed} is a pair $({\bf u}, Q)$ where $u=\{u_1,u_2,\ldots,u_n\}$ is a set of elements of $F$ which freely generate the field $F$.
For any vertex $i\in Q_0$, we denote
$$\prod_{j\to i}u_j=\prod_{\alpha\in Q_1,h(\alpha)=i}u_{t(\alpha)},\quad\quad
 \prod_{i\to j}u_j=\prod_{\alpha\in Q_1,t(\alpha)=i}u_{h(\alpha)}.$$
The mutation $\mu_i({\bf u},Q)$ is the seed $({\bf u}', Q')$  where $Q'=\mu_i(Q)$ and ${\bf u}'$ is obtained from ${\bf u}$ by replacing $u_i$ by
$$u'_i=\dfrac{\dis\prod_{j\to i}u_j+\prod_{i\to j}u_j}{u_i}.$$
Let $(\{x_1\dots,x_n\},Q)$ be the initial seed.  A \emph{cluster} is a set ${\bf u}'$ which appears in a seed $({\bf u}', Q')$ obtained from the initial seed by iterated mutations. An element in a cluster is called a \emph{cluster variable}. A \emph{cluster monomial} is a product of cluster variables in the same cluster. The (coefficient-free) \emph{cluster algebra} $\A(Q)$ associated with $Q$ is the subring of  $F$ generated by all cluster variables.

Next we recall the definition of the cluster algebra $\mathcal{A}_{\rm prin}$ with principal coefficient corresponding to the coefficient-free cluster algebra $\A=\A(Q)$. Let $I_{\rm uf}=Q_0=\{1,\dots,n\}$, $I=\{1,\dots,2n\}$. Define a quiver $\tilde{Q}$ with the vertex set $\tilde{Q}_0:=I$ and the edge set
$$\tilde{Q}_1:=Q_1\cup \{(n+i,i)|i=1,\dots,n\}.$$
In other words, $\tilde{Q}$ is obtained from $Q$ by adding an arrow from $n+i$ to $i$ for each $i=1,\dots,n$. We call $i\in I_{\rm uf}$ unfrozen vertices and $i\in I\setminus I_{\rm uf}$ frozen vertices. Starting with the initial seed $(\{x_1,\dots,x_{2n}\},\tilde{Q})$, we mutate it iteratively similar as above, with the restriction that we only use the mutations $\mu_i$ for $1\le i\le n$ (that is, only mutate at the unfrozen vertices). For each new seed $(\{x_1',\dots,x_{2n}'\},\tilde{Q}')$, the first $n$ rational functions $x_1',\dots,x_n'$ form a cluster. (Note that the frozen variables do not change after mutation, that is, $x_i'=x_i$ for $n+1\le i\le 2n$; and we do not consider them to be cluster variables.) The union of all clusters gives the set of cluster variables.   The cluster algebra $\mathcal{A}_{\rm prin}$ is the subring of $\mathbb{Q}(x_1,\dots,x_{2n})$ generated over  $\mathbb{QP}:=\mathbb{Q}[x_{n+1}^{\pm1},\dots,x_{2n}^{\pm1}]$ by all cluster variables.

\smallskip

Let $Q'=(Q_0', Q_1', h', t')$ be another quiver. We say that $Q$ is a \emph{subquiver} of $Q'$ if
$$Q_0\subseteq Q'_0 \text{ and } Q_1\subseteq Q'_1$$
and $h(e)=h'(e)$ and $t(e)=t'(e)$ for any arrow $e\in Q_1$.
We say that $Q$ is a \emph{full subquiver} of $Q'$ if $Q$ can be obtained from $Q'$ by removing vertices $Q'_0\setminus Q_0$ and their incident arrows.


\subsection{Special classes of type $A$ quivers}
Here we define type $A$, linear,  completely extended linear, and extended linear quivers. The relation among the four classes can be described as follows:
$$
\aligned
&\{\textrm{type }A\} \supset  \{\textrm{extended linear}\}\supset \{\textrm{completely extended linear}\}\\
&\hspace{13em} \raisebox{.6em}{\rotatebox[origin=c]{-45}{\large $\supset$}}
\, \{\textrm{linear}\}
\endaligned
$$
\subsubsection{Type A quivers}

By definition, type $A$ quivers are those that are mutation equivalent to quivers 
of the form $\bullet\to \bullet\to \cdots\to \bullet$. In \cite{BV}, A. Buan and D. Vatne showed that a type $A$ quiver is a connected quiver such that
\begin{itemize}[leftmargin=20pt, itemsep=5pt]
\item all nontrivial simple cycles in the underlying graph have length 3, and the corresponding directed subgraphs are oriented (3-cycles);
\item the vertex degrees of the underlying graph are at most 4; moreover, a degree-4 vertex belongs to two 3-cycles, a degree-3 vertex belongs to one 3-cycle.
\end{itemize}

\subsubsection{Linear quivers}\label{subsection:Linear quivers}
For two integers $a$ and $b$, we denote $[a,b]=\{a,a+1,\ldots,b\}$ if $a\le b$, and $[a,b]=\emptyset$ if $a>b$.

A \emph{linear quiver} is a quiver with $n$ vertices $\{v_1,v_2,\ldots,v_n\}$ and $n-1$ arrows in which any two consecutive vertices $v_i$ and $v_{i+1}$ $(i\in[1,n-1])$ are connected by a single arrow in either direction and there are no others arrows.

In order to have a convenient description for a linear quiver $Q$, we construct a sequence $\{\delta_i\}_{1\leq i \leq n-1}$ such that  $\delta_i=0$ if there is an arrow going from the vertex $v_i$ to the vertex $v_{i+1}$, and $\delta_i=1$ otherwise.
For example, for the quiver $1\to 2\to 3\to 4\leftarrow 5$, we have $(\delta_1,\dots,\delta_4)=(0,0,0,1)$.


\subsubsection{Completely extended linear quiver} We define a \emph{completely extended linear quiver} $Q'$ as obtained from a linear quiver $Q$ by attaching a 3-cycle to every edge,
 a 3-cycle to $v_1$, and a 3-cycle to $v_n$.
(So $Q'$ has $2\lqv+3$ vertices.) By abuse of terminology,  we also call the pair $(Q,Q')$ a completely extended linear quiver, whenever we need to specify the linear quiver $Q$.

Let $(Q,Q')$ be a completely extended linear quiver with $Q_0=\{v_1,\ldots,v_{\lqv}\}$ and $Q'_0 =\{v_1, \dots, v_{2n+3}\}$. If $v_i\in Q'_0\setminus Q_0$ is adjacent to both $v_j$ and $v_{j+1}$ then we also denote $v_{j,j+1}=v_i$. For the 3-cycle attached to $v_1$, the head (resp. tail) of the outgoing (resp. incoming) arrow is denoted $v_{1,0}$ (resp. $v_{1,1}$). We define $v_{n,0}$ and $v_{n,1}$ similarly.

\begin{example} \label{QuiverW3Cycles}
The quiver $(Q, Q')$ in Figure \ref{CELNQuiver} is a completely extended linear quiver, where $Q$ is the linear part $1 \to 2 \to 3 \leftarrow 4$. In there, $v_{1,0}:=5$, $v_{1,1}:=6$, $v_{1,2}:=7$, $v_{2,3}:=8$, $v_{3,4}:=9$, $v_{4,0}:=10$ and $v_{4,1}:=11$.
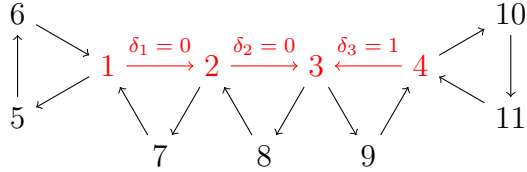
\begin{figure}[h!]
\begin{tikzpicture}[scale=0.8]
\node (v1) at (0:1) [red] {1};
\node (v2) at (0:2.732) [red] {2}; 
\node (v3) at (0:4.464) [red] {3}; 
\node (v4) at (0:6.196) [red] {4};
\node (v5) at (-120:1) {5};
\node (v6) at (120:1) {6};
\path (-60:1.732)++(1,0) node (v7) {7};
\path (-60:1.732)++(2.732,0) node (v8) {8};
\path (-60:1.732)++(4.464,0) node (v9) {9};
\path (30:1.732)++(6.196,0) node (v10) {10};
\path (-30:1.732)++(6.196,0) node (v11) {11};
		
\draw [->] (v1) to (v5);
\draw [->] (v5) to (v6);
\draw [->] (v6) to (v1);
\draw [->,red] (v1) -- (v2) node[midway,above,font=\tiny] {$\delta_1=0$};
\draw [->] (v2) to (v7);
\draw [->] (v7) to (v1);
\draw [->,red] (v2) -- (v3) node[midway,above,font=\tiny] {$\delta_2=0$};
\draw [->] (v3) to (v8);
\draw [->] (v8) to (v2);
\draw [->,red] (v4) -- (v3) node[midway,above,font=\tiny] {$\delta_3=1$};
\draw [->] (v3) to (v9);
\draw [->] (v9) to (v4);
\draw [->] (v4) to (v10);
\draw [->] (v10) to (v11);
\draw [->] (v11) to (v4);
\end{tikzpicture}
\caption{A completely extended linear quiver}\label{CELNQuiver}
\end{figure}
\end{example}

For convenience, if $v_{j,k}=v_i$, then we denote the variable $x_{j,k}=x_i$.

\subsubsection{Extended linear quivers}
An \emph{extended linear quiver} $(Q,P)$ is obtained from a completely extended linear quiver $(Q,Q')$ by removing some vertices (or none) in $Q'_0\setminus Q_0$ and the arrows incident with them. Equivalently, we can characterize $P$ as a quiver obtained from $Q$ by adding some (or none) of the following:
\begin{itemize}[leftmargin=20pt, itemsep=5pt]
\item a 3-cycle or an edge hung on $v_1$, or
\item a 3-cycle or an edge hung on $v_n$, or
\item 3-cycles attached to some edges of $Q$.
\end{itemize}

There is an obvious way to obtain a completely extended linear quiver $(Q,Q')$ from an extended linear quiver $(Q,P)$ (up to relabeling vertices in $Q'_0\setminus P_0$). An example is shown in Figure \ref{completeAQuiver}.
\begin{figure}[h!]
\begin{tabular}{lll}
\begin{tikzpicture}[scale=0.8]
\node (v1) at (0:1) [red] {1};
\node (v2) at (0:2.732) [red] {2}; 
\node (v3) at (0:4.464) [red] {3}; 
\node (v4) at (-120:1) {4};
\path (-60:1.732)++(2.732,0) node (v5) {5};

\draw [->] (v1) to (v4);
\draw [red,->] (v2) to (v1);
\draw [red,->] (v2) to (v3);
\draw [->] (v3) to (v5);
\draw [->] (v5) to (v2);
\end{tikzpicture}
&
\begin{tikzpicture}[scale=0.8]
\node (v1) at (0:0) {};
\node (v2) at (0:2) {};
\node (v3) at (-60:1.732) {};

\draw [->] (v1) to (v2);

\end{tikzpicture}
&
\begin{tikzpicture}[scale=0.8]
\node (v1) at (0:1) [red] {1};
\node (v2) at (0:2.732) [red] {2}; 
\node (v3) at (0:4.464) [red] {3}; 
\node (v4) at (-120:1) {4};
\path (-60:1.732)++(2.732,0) node (v5) {5};
\node (v6) at (120:1) [blue] {6};
\path (-60:1.732)++(1,0) node (v7) [blue] {7};
\path (30:1.732)++(4.464,0) node (v8) [blue] {8};
\path (-30:1.732)++(4.464,0) node (v9) [blue] {9};

\draw [->] (v1) to (v4);
\draw [->,blue] (v4) to (v6);
\draw [->,blue] (v6) to (v1);
\draw [red,->] (v2) to (v1);
\draw [->,blue] (v1) to (v7);
\draw [->,blue] (v7) to (v2);
\draw [red,->] (v2) to (v3);
\draw [->] (v3) to (v5);
\draw [->] (v5) to (v2);
\draw [->,blue] (v3) to (v8);
\draw [->,blue] (v8) to (v9);
\draw [->,blue] (v9) to (v3);
\end{tikzpicture}
\end{tabular}
\caption{An extended linear quiver before and after being completed}
\label{completeAQuiver}
\end{figure}
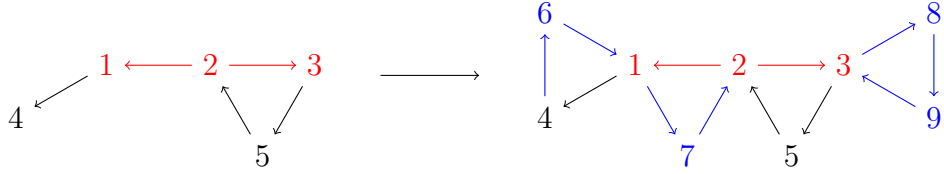

\section{Parametrization of Cluster Monomials by \textbf{d}-vectors}\label{dvecpipe}
\subsection{Cluster monomials  and \textbf{d}-vectors}
It is well known that any cluster algebra associated to a type $A$ quiver with $n$ vertices can be constructed from  a triangulation on a $(n+3)$-gon $P$. 
A \emph{diagonal} on the $(n+3)$-gon $P$ is a line segment connecting two non-adjacent
vertices. 
Two diagonals are said to be crossing if they intersect in the interior of $P$. A triangulation is a maximal set of non-crossing diagonals together with the boundary edges of $P$. 
A connected curve on the polygon is called a \emph{pseudo-diagonal} if it is isotopic to a diagonal (and its endpoints are the same as those of the diagonal) and if its interior is in the interior of the polygon.     Two pseudo-diagonals are said to be \emph{crossing} if they intersect in the interior of the polygon. 
Note that if two pseudo-diagonals are not crossing, then the corresponding diagonals either coincide or  do not cross in the interior of the polygon.

Given a triangulation of $P$, assume the non-crossing diagonals are $T_1,\dots,T_n$ and the boundary edges are $T_{n+1},\dots,T_{2n+3}$. Then it induces a type $A$ quiver $Q$ with $n$ vertices labeled by $T_1,\dots,T_n$, such that there is an arrow from $T_i$ to $T_j$ if and only if there is a triangle with $T_i$ and $T_j$ being its two sides, and $T_j$ is obtained from $T_i$ by rotating ($<180^\circ$) counterclockwisely about their common endpoint. (For example, for the triangulation in Figure \ref{fig:triangulation example}, the corresponding quiver is $T_1\to T_2\to T_3\leftarrow T_4$.) The triangulation also induces a larger quiver $Q'$ with $2n+3$ vertices labeled by $T_1,\dots, T_{2n+3}$ following the same rule as above. We call $T_{n+1},\dots,T_{2n+3}$ the frozen vertices of $Q'$.

Take $P$ and  $Q$ as above. 
It is also known that the cluster variables of $\mathcal{A}(Q)$ are in natural bijection with all the diagonals of $P$.
Using this bijection, a cluster monomial of $\mathcal{A}(Q)$ can be identified with a finite set of pairwise non-crossing (and non-identical) pseudo-diagonals, or equivalently  with 
$$
\{(D_1,d_1),...,(D_m,d_m)\},$$where $m$ is a non-negative integer, $D_1,...,D_m$ are pairwise non-crossing diagonals, and $d_1,...,d_m$ are positive integers. It is easy to see that given $\{(D_1,d_1),...,(D_m,d_m)\}$, we can always find 
 a finite set of pairwise non-crossing (and non-identical) pseudo-diagonals, such that $d_i$ of these pseudo-diagonals are isotopic to $D_i$ for $1\le i\le m$.
 
The following definition uses a natural intersection number, which was already considered in \cite{FST,MS,ST,S2,MSW}.

\begin{definition}
For two diagonals $D,E$ on the $(n+3)$-gon, we define the intersection number $i(D,E)$ of $D$ and $E$ as follows:
$$
i(D,E):=\left\{\begin{array}{ll} 1, & \text{ if }D\text{ and }E\text{ cross each other};\\
-1,  & \text{ if }D\text{ and }E\text{ are the same};\\
0, &\text{ otherwise}.    \end{array}   \right.
$$
Then the \textbf{d}-vector of the cluster monomial $\{(D_1,d_1),...,(D_m,d_m)\}$ is defined by
$$
(a_1,\dots,a_n)=(\sum_{j=1}^m d_j i(D_j,T_1), ..., \sum_{j=1}^m d_j i(D_j,T_n)).
$$
\end{definition}

It is easy to see that the \textbf{d}-vector $(a_1,...,a_n)$ of any cluster monomial satisfies the following:

\noindent \textbf{Property A.} For any 3-cycle $i\to j\to k\to i$ in $Q$ such that $a_i,a_j,a_k$ are positive and satisfy the triangle inequalities (i.e., the sum of any two numbers is strictly greater than the third), the sum $a_i+a_j+a_k$ is even.

\begin{proof}
Assume $i\to j\to k\to i$ is a 3 cycle in $Q$ such that $a_i,a_j,a_k$ are positive and satisfy the triangle inequalities.
Correspondingly, the three diagonals $T_i, T_j, T_k$ form a triangle. Since $a_i\ge 0$, $T_i$ is not in $\{D_1,\dots,D_m\}$. Similar conclusion holds for $T_j$ and $T_k$. There are two cases to consider:

\begin{figure}[h!]
  \centering
    \includegraphics[width=.5\textwidth]{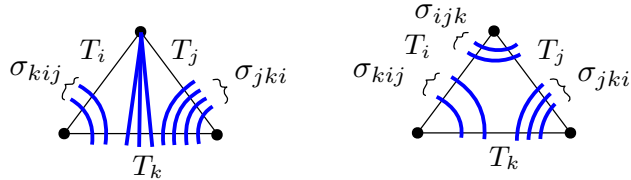}
    \caption{Two cases.}
    \label{pipedream4}
\end{figure}

Case 1: there is at least one diagonal in $\{D_1,\dots,D_m\}$ that crosses an edge and shares a vertex with the triangle formed by $T_i, T_j, T_k$. Without loss of generality, assume $D_\ell$ crosses $T_k$ and shares a vertex with $T_i$ and $T_j$ (see Figure \ref{pipedream4} Left, where we draw pseudo-diagonals to illustrate the multiplicity). In this case all the $a_i$ diagonals (with multiplicity) crossing $T_i$ must cross $T_k$ (otherwise they will cross $D_\ell$). Similarly, all the $a_j$ diagonals (with multiplicity) crossing $T_j$ must cross $T_k$. Thus the total number of diagonals crossing $T_k$ is $a_k=a_i+a_j+a_\ell$, contradicting the  triangle inequality assumption. 

Case 2: each diagonal in  $\{D_1,\dots,D_m\}$ either cross two of $T_i, T_j, T_k$, or none (see Figure \ref{pipedream4} Right). In this case, assume $\sigma_{ijk}$ is the number of diagonals (with multiplicity) crossing $T_i$ and $T_j$, etc. (So $\sigma_{ijk}=\sigma_{jik}$.) Then $a_i=\sigma_{ijk}+\sigma_{kij}$, $a_j=\sigma_{ijk}+\sigma_{jki}$, $a_k=\sigma_{jki}+\sigma_{kij}$, therefore $a_i+a_j+a_k=2(\sigma_{ijk}+\sigma_{jki}+\sigma_{kij})$ is even.
\end{proof}

\subsection{Construction of pipelines}\label{constpipelines}
In this subsection,  the notion of pipelines will be introduced (which we do not claim any originality\footnote{A set of pipelines (to be defined in Proposition \ref{d-vector criteria}) are, in many aspects, similar to a lamination as in \cite{FG}, and a pipeline is similar to a curve in a lamination (we thank one of the referees to pointing it out to us). For example, our construction of the set of pipelines given in Proposition \ref{d-vector criteria} is essentially the same as the ``Reconstruction'' of the lamination given in \cite[page 11]{FG}. The main difference is the following: the two ends of our pipelines are marked points, and that two pipelines may meet at marked points; in contrast, two curves of a lamination do not end at marked points, and two curves do not meet at endpoints. Moreover, we do not  discuss homotopy equivalence of pipelines in this paper.
}) to help us understanding $d$-vectors of cluster monomials.

Let $\mathcal{W}$ be the set of all integer vectors $(a_1,...,a_n)$ satisfying Property A,
 that is, $a_i+a_j+a_k$ is even when both of the following conditions are satisfied:

(i) $i\to j\to k\to i$ is a 3-cycle, and 

(ii) $a_i,a_j,a_k$ are positive and satisfy the triangle inequalities.

\noindent (In particular, if $Q$ has no 3-cycles, then $\mathcal{W}=\mathbb{Z}^n$.)

In this subsection we prove that the cluster monomials of $\mathcal{A}(Q)$ are in bijection with $\mathcal{W}$. We will define a map from $\mathcal{W}$ to the cluster monomials, which would then induce the  immediate bijection. Let $[x]_+=\max(x,0)$ for any real number $x$.
Let ${\bf a}=(a_1,...,a_n)\in\mathcal{W}$.
We define a function $\sigma:\mathbb{R}_{\ge0}^3\to\mathbb{R}_{\ge0}$ as follows:
\begin{equation}\label{function sigma}
\sigma(x,y,z)=\frac{[x+y-z]_+-[x-y-z]_+ -[y-x-z]_+}{2}=\begin{cases}
x, &\textrm{ if }y>x+z;\\
y, &\textrm{ if }x>y+z;\\
0, &\textrm{ if }z>x+y;\\
\displaystyle\frac{x+y-z}{2}, &\textrm{ otherwise}.
\end{cases}
\end{equation}
For convenience, we denote $$\sigma_{ijk}^{\bf a}=\sigma(a_i,a_j,a_k).$$
(If no confusion shall arise, we denote $\sigma_{ijk}=\sigma_{ijk}^{\bf a}$. Note that it coincides with $\sigma_{ijk}$ appeared in the proof of Property A, Case 2.)

\begin{remark}
By case-by-case analysis for the cases shown in Figure \ref{pipedream4}, it is obvious that the equality $\sigma_{kij}+\sigma_{jki}\le [a_k]_+$ (and similar inequalities obtained by permuting $i,j,k$) holds for any 3-cycle $i\to j\to k\to i$ .  
\end{remark}

\begin{proposition}\label{d-vector criteria}
Let $Q$ be a type $A$ quiver. Then
\begin{enumerate}[leftmargin=*, itemsep=5pt]
\item ${\bf a}\in \mathbb{Z}^n$ is the \textbf{d}-vector of some cluster monomial of $\mathcal{A}(Q)$ if and only if ${\bf a}\in\mathcal{W}$.

\item Two distinct cluster monomials have different \textbf{d}-vectors.
\end{enumerate}
\end{proposition}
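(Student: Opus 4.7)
The necessity direction of claim (1) was just established as Property A. For the sufficiency direction and for claim (2), my plan is to construct an inverse to the $\textbf{d}$-vector map: an explicit assignment $\mathbf{a} \mapsto M_{\mathbf{a}}$ that sends each $\mathbf{a} \in \mathcal{W}$ to a cluster monomial whose $\textbf{d}$-vector is $\mathbf{a}$, and that recovers the original monomial when applied to the $\textbf{d}$-vector of a cluster monomial.

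The assignment will use \emph{pipelines}. Given $\mathbf{a} = (a_1,\ldots,a_n) \in \mathcal{W}$, I first peel off the negative part: for each $i$ with $a_i < 0$, include $T_i$ in $M_{\mathbf{a}}$ with multiplicity $-a_i$ and then replace $a_i$ by $0$. I also extend $\mathbf{a}$ to all sides of the triangulation by setting $a_j := 0$ for each boundary edge $T_j$ with $j > n$. Then, inside each triangle of the triangulation with sides $T_i,T_j,T_k$, I draw $\sigma_{ijk}$ non-crossing pipe-segments running between $T_i$ and $T_j$, and similarly $\sigma_{jki}$ between $T_j$ and $T_k$, and $\sigma_{kij}$ between $T_i$ and $T_k$; any ``leftover'' crossings needed on a side to reach the prescribed $a_i$ are realized by segments with one endpoint at a vertex of the triangle, routed by a fixed convention (e.g. always terminating on the clockwise vertex of the side they enter). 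Across each internal edge $T_j$ the segments glue into longer curves, because the two adjacent triangles produce the same count on $T_j$ by the symmetry $\sigma(x,y,z) = \sigma(y,x,z)$. This yields a finite collection of pairwise non-crossing pseudo-diagonals, which is the cluster monomial $M_{\mathbf{a}}$.

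The main point to verify is that the counts are non-negative integers and that they reproduce the prescribed crossing numbers. Non-negativity is immediate from the piecewise formula for $\sigma$. Integrality is where Property A enters: the only place the formula produces a fraction is when $a_i,a_j,a_k$ are positive and satisfy the triangle inequalities, and in that case the triangle $T_iT_jT_k$ must be internal (so its three sides form a $3$-cycle $i\to j\to k\to i$ in $Q$), whence Property A forces $a_i+a_j+a_k$ to be even. That the $\textbf{d}$-vector of $M_{\mathbf{a}}$ is $\mathbf{a}$ then follows from a short case-by-case check over the four pieces of the formula for $\sigma$: in each case one verifies that, after accounting for the vertex-terminated segments, the total number of pipelines crossing $T_i$ is exactly $a_i$.

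For claim (2), the same local analysis shows the construction is reversible. Given any cluster monomial $M$ with $\textbf{d}$-vector $\mathbf{a}$, the diagonals of $M$ isotopic to some $T_i$ are forced by the negative coordinates of $\mathbf{a}$, and the remaining pseudo-diagonals, cut by $T_1,\ldots,T_n$ into arcs living inside the individual triangles, fill each triangle with segments whose distribution between pairs of sides is uniquely determined by the crossing numbers on those sides. Hence $M = M_{\mathbf{a}}$. I expect the main difficulty to be the bookkeeping in this local analysis within each triangle, especially the correct handling of boundary triangles (those with a side among $T_{n+1},\ldots,T_{2n+3}$) and of pipeline-segments that terminate at vertices of the polygon rather than crossing a diagonal; each of these is a routine case analysis driven by the piecewise formula for $\sigma$, rather than a conceptual obstacle.
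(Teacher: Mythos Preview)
Your approach is essentially the same as the paper's: construct an inverse map $g:\mathcal{W}\to M$ via pipelines built from the $\sigma$-counts of pipe-segments in each triangle, route the leftover marked points to a vertex, and verify that $g$ and the $\mathbf{d}$-vector map $f$ are mutual inverses. One small correction: the leftover segments on a side $T_k$ must terminate at the vertex \emph{opposite} $T_k$ (the common endpoint of $T_i$ and $T_j$), not at a ``clockwise'' endpoint of $T_k$ itself---otherwise they would cross the $\sigma_{kij}$ or $\sigma_{jki}$ segments already drawn near that endpoint.
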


\begin{proof}
Let $M$ be the set of monomials of $\mathcal{A}(Q)$. An element in $M$ can be identified with a set $\{(D_1,d_1),...,(D_m,d_m)\}$. Let $f: M\to \mathcal{W}$ be the map that sends a cluster monomial to its \textbf{d}-vector. It follows from Property A that the image of $f$ is indeed in $\mathcal{W}$. 

Next we construct a map $g: \mathcal{W}\to M$. Given ${\bf a}\in\mathcal{W}$, we construct  \emph{the set of pipelines associated to} $\bf a$ in three steps. We consider the triangulation of the $(n+3)$-gon corresponding to $Q$, and $T_i, T_j, T_k$ appeared below are diagonals or boundary edges; in other words, $1\le i,j,k\le 2n+3$. 

\noindent \textbf{Step 1:} If $a_i>0$ then draw $a_i$ marked points on  $T_i$  to separate it into $a_i+1$ segments. If $a_i<0$ then draw $-a_i$ pipes so that these pipes are pairwise non-crossing pseudo-diagonals isotopic to $T_i$ and that they do not cross any other $T_j$ $(j\neq i)$. 

\noindent \textbf{Step 2:} If $T_i$ and $T_j$ are two sides of a triangle with the third side $T_k$, then for $1\le r\le \sigma_{ijk}$, we join the two $r$-th marked points on $T_i$ and $T_j$ (ordering in the increasing distance from the common endpoint of $T_i$ and $T_j$) by a pipe inside the triangle. Draw these pipes so that they are disjoint from each other and from the pipes constructed in Step 1.

\noindent \textbf{Step 3:} Suppose that $T_i, T_j, T_k$ form a triangle. Then for each marked point on $T_k$ that is not connected by a pipe to any marked point on $T_i$ or $T_j$ (see Figure \ref{pipedream4} Right), we draw a pipe from the marked point to the common endpoint of $T_i$ and $T_j$. Draw these pipes inside the triangle in such a way that they are non-crossing with each other and with the pipes constructed in Step 1,2.


A pipeline is a union of pipes connected consecutively through the marked points (but not through the vertices of the $(n+3)$-gon). Then the pipelines are pairwise non-crossing. Since the endpoints of each pipeline are non-adjacent vertices of the polygon, every pipeline is a pseudo-diagonal.    Hence the set of pipelines corresponds to a cluster monomial, which we define as the image $g({\bf a})$.

It follows immediately from the above construction that $g$ is the inverse of $f$. So $f$ is bijective. The surjectivity of $f$ implies (1) and the injectivity of $f$ implies (2).
\end{proof}

The above propostion allows us to denote the (unique) cluster monomial with \textbf{d}-vector $(a_1,\dots,a_n)$ by $x[a_1,\dots,a_n]$ or $x[{\bf a}]$.

 Each pipeline $\Lambda$ corresponds to a 0-1 sequence ${\bf b}={\bf b}_\Lambda=(b_1,\dots,b_n)$ such that \begin{equation}\label{b}
b_i=\begin{cases}
&0,\quad\textrm{ if the pipeline $\Lambda$ is disjoint from  $T_i$;}\\
&1,\quad \textrm{ otherwise. }
\end{cases}
\end{equation}
Note that $\Lambda$ corresponds to a linear full subquiver of $Q$. Two pipelines sharing the same endpoints correspond to the same 0-1 sequence.    Let $S$ be  the multiset of 0-1 sequences corresponding to all the pipelines in the set of pipelines associated to ${\bf a}$. Then
\begin{equation}
x[{\bf a}]=\prod_{{\bf b} \in S} x[{\bf b}].
\end{equation}

\begin{example}\label{ex3332431} 
The first two pictures in Figure  \ref{figure:pipelines} are a type $A$ quiver with 7 vertices and its corresponding triangulation on the 10-gon. For a clearer illustration, the 10-gon is drawn as a concave polygon.  The bottom illustrates the construction of pipelines for ${\bf a}=(3,3,3,2,4,3,1)$.

\begin{figure}[h!]

\begin{tikzpicture}[scale=0.8]

\node (v1) at (150:1.732) {1};
\node (v2) at (0:0) {2};
\node (v3) at (0:1.732) {3};
\path (30:1.732)++(1.732,0) node (v4) {4};
\node (v5) at (-60:1.732) {5};
\path (-120:1.732)++(0.866,-1.5) node (v6) {6};
\path (-60:1.732)++(0.866,-1.5) node (v7) {7};
\path (v5)++(0,-2) node [below] {$Q$};

\draw [->] (v2) to (v1);
\draw [->] (v2) to (v3);
\draw [->] (v3) to (v5);
\draw [->] (v5) to (v2);
\draw [->] (v5) to (v6);
\draw [->] (v6) to (v7);
\draw [->] (v7) to (v5);
\draw [->] (v3) to (v4);

\begin{scope}[shift={(5,-1.5)}, inner sep=2pt]
\tikzstyle{every node}=[font=\footnotesize]

\node (v1) at (0:0) {};
\node (v2) at (60:1.732) {};
\node (v3) at (0:1.732) {};
\node (v4) at (-60:1.732) {};

\draw [fill=black] (v1) circle [radius=2pt];
\draw [fill=black] (v2) circle [radius=2pt];
\draw [fill=black] (v3) circle [radius=2pt];
\draw [fill=black] (v4) circle [radius=2pt];

\draw [-] (v1.center) to (v2.center);
\draw [-] (v2.center) -- (v3.center) node[midway,left] {1};
\draw [-] (v3.center) to (v1.center);
\draw [-] (v3.center) to (v4.center);

\path (v2)++(1.732,0) node (v5) {};
\path (v3)++(1.732,0) node (v6) {};
\path (v4)++(1.732,0) node (v7) {};

\draw [fill=black] (v5) circle [radius=2pt];
\draw [fill=black] (v6) circle [radius=2pt];
\draw [fill=black] (v7) circle [radius=2pt];

\draw [-] (v5.center) to (v2.center);
\draw [-] (v7.center) to (v4.center);

\draw [-] (v3.center) -- (v5.center) node[midway,left] {2};
\draw [-] (v5.center) -- (v6.center) node[midway,right] {3};
\draw [-] (v6.center) -- (v3.center) node[midway,above] {5};
\draw [-] (v6.center) -- (v7.center) node[midway,right] {6};
\draw [-] (v7.center) -- (v3.center) node[midway,left] {7};

\path (v2)++(2.598,1.5) node (v8) {};
\path (v3)++(2.598,1.5) node (v9) {};

\draw [fill=black] (v8) circle [radius=2pt];
\draw [fill=black] (v9) circle [radius=2pt];

\draw [-] (v5.center) to (v8.center);
\draw [-] (v8.center) to (v9.center);
\draw [-] (v9.center) -- (v5.center) node[midway,above] {4};
\draw [-] (v9.center) to (v6.center);

\path (v4)++(3.464,0) node (v10) {};

\draw [fill=black] (v10) circle [radius=2pt];

\draw [-] (v6.center) to (v10.center);
\draw [-] (v10.center) to (v7.center);


\path (v7)++(0,-0.5) node [below] {$G$};
\end{scope}
\end{tikzpicture}
\end{figure}

\begin{figure}[h!]
  \centering
    \includegraphics[width=.8\textwidth]{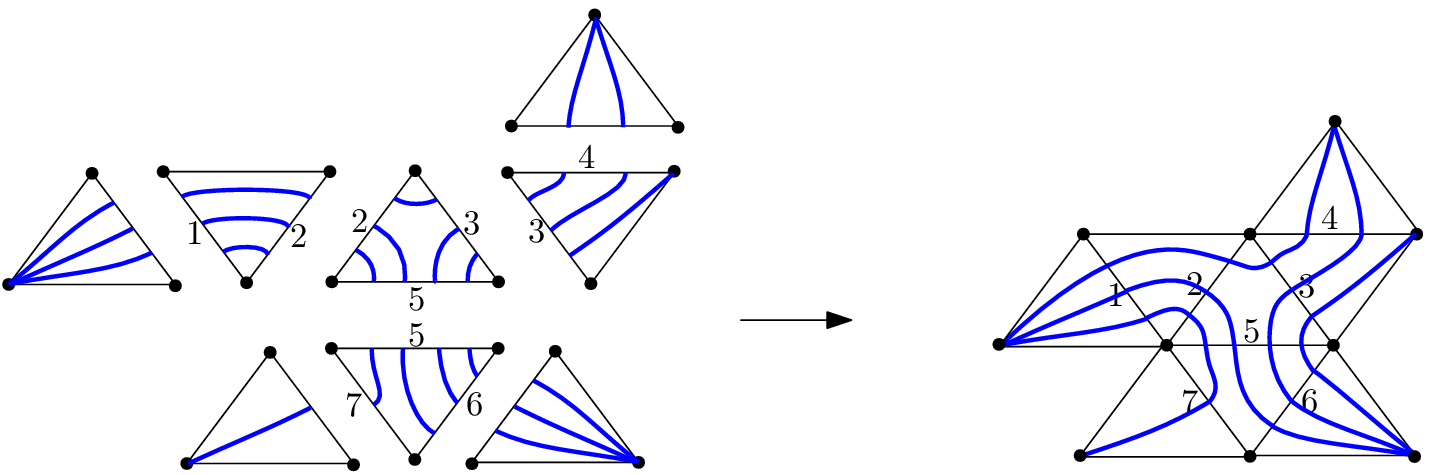}
\caption{The construction of pipelines}
\label{figure:pipelines}
\end{figure}

The set of pipelines associated to ${\bf a}$ consists of 5 pipelines, namely those pass through edges sets $\{1,2,3,4\}$, $\{1,2,5,6\}$, $\{1,2,5,7\}$, $\{3,4,5,6\}$, $\{3,5,6\}$, respectively. So the cluster monomial $x[{\bf a}]$ is decomposed as  $x[3,3,3,2,4,3,1]=x[1,1,1,1,0,0,0] \cdot x[1,1,0,0,1,1,0] \cdot x[1,1,0,0,1,0,1] \cdot x[0,0,1,1,1,1,0] \cdot x[0,0,1,0,1,1,0]$, and the multiset $S$ (which is indeed a set in this example) consists of the five 0-1 sequences appeared above.
\end{example}

\begin{lemma}\label{a to a+}
For ${\bf a}\in \mathbb{Z}^n$, assume that $[{\bf a}]_+:=[[a_1]_+,\dots,[a_n]_+]$ satisfies Property A and $x[[{\bf a}]_+]=\prod x[{\bf b}]$ is a factorization into cluster variables in the same cluster. Then $$x[{\bf a}]=x[[{\bf a}]_+]\prod_i x_i^{[-a_i]_+}=\prod x[{\bf b}]\prod_i x_i^{[-a_i]_+},$$
and that $x[{\bf b}]$ and $x_i$ ($a_i<0$) are in the same cluster.
\end{lemma}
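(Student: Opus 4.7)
The plan is to compare the pipeline construction of Subsection~\ref{constpipelines} applied to $\mathbf{a}$ with the one applied to $[\mathbf{a}]_+$, and read off the decomposition directly from that comparison. The starting observation is that Step~1 is the only place where negative and non-negative entries are treated differently: for $a_i<0$ it deposits $-a_i$ pipes isotopic to $T_i$ (which by construction cross no $T_j$ and, by the disjointness clauses in Steps~2 and~3, no later pipe either), while for $a_i\ge 0$ it places $[a_i]_+$ marked points on $T_i$ exactly as for $[\mathbf{a}]_+$. Since a pipe isotopic to $T_i$ produced in Step~1 has no marked point attached to it, it cannot be concatenated with anything and therefore is itself a pipeline.

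Next I would argue that the pipes added in Steps~2 and~3 for $\mathbf{a}$ are literally the same pipes as for $[\mathbf{a}]_+$. Steps~2 and~3 only join marked points, and the multiset of marked points on each $T_i$ is identical for $\mathbf{a}$ and $[\mathbf{a}]_+$; in particular $\sigma_{ijk}$ is effectively evaluated on the non-negative triple $([a_i]_+,[a_j]_+,[a_k]_+)$ because no joining pipe can emanate from a $T_i$ that carries no marked point. Combined with the hypothesis that $[\mathbf{a}]_+\in\mathcal{W}$ (so the construction for $[\mathbf{a}]_+$ is itself valid), this shows that the set of pipelines for $\mathbf{a}$ is the disjoint union of (i) the set of pipelines for $[\mathbf{a}]_+$, and (ii) for each $i$ with $a_i<0$, exactly $-a_i$ pipelines each isotopic to $T_i$. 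Under the bijection of Proposition~\ref{d-vector criteria}, a pipeline isotopic to $T_i$ corresponds to the initial cluster variable $x_i$, which immediately yields
\[
x[\mathbf{a}] \;=\; x[[\mathbf{a}]_+]\prod_i x_i^{[-a_i]_+} \;=\; \Bigl(\prod x[\mathbf{b}]\Bigr)\prod_i x_i^{[-a_i]_+}.
\]

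Finally, for the compatibility statement I would use the surface-type characterization that cluster variables lie in a common cluster iff their associated (pseudo-)diagonals are pairwise non-crossing. The pseudo-diagonals of the $x[\mathbf{b}]$ are non-crossing by the hypothesis that $\prod x[\mathbf{b}]$ is a factorization within one cluster. For each $i$ with $a_i<0$, the diagonal $T_i$ carries no marked point in the construction of $[\mathbf{a}]_+$, so no pipeline of $[\mathbf{a}]_+$ crosses $T_i$; hence $x_i$ is compatible with every $x[\mathbf{b}]$. The $T_i$ are pairwise non-crossing as distinct diagonals of the fixed triangulation. Thus all the listed factors are pairwise compatible and therefore belong to one cluster.

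The main obstacle, such as it is, is the careful bookkeeping in the first two paragraphs to certify that Steps~2 and~3 really are blind to the negative coordinates of $\mathbf{a}$; once this is stated cleanly (using the fact that marked points, not coordinates, drive Steps~2 and~3), the factorization and the common-cluster claim both fall out of the same picture.
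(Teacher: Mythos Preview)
Your proof is correct and rests on the same key observation as the paper's: when $a_i<0$, no pipeline for $[\mathbf{a}]_+$ touches $T_i$, so adjoining copies of $T_i$ keeps everything non-crossing. The paper argues this more tersely, first establishing the common-cluster property and then deducing the factorization (implicitly via $\mathbf{d}$-vector additivity and the uniqueness in Proposition~\ref{d-vector criteria}), whereas you run the pipeline construction for $\mathbf{a}$ itself and read off both conclusions directly from the resulting disjoint union; this is a minor reordering rather than a different method.
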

\begin{proof}
If $a_i<0$, then there is no ${\bf b}$ satisfying $b_i>0$. Thus no pseudo-diagonals corresponding to pipelines for $[{\bf a}]_+$ will cross the diagonal $T_i$. Therefore,  after adding  $T_i$ we still get a set of non-crossing pseudo-diagonals, which means that $x[{\bf b}]$ and $x_i$ ($a_i<0$) are in the same cluster.
It then follows that $x[{\bf a}]=x[[{\bf a}]_+]\prod_i x_i^{[-a_i]_+}$.
\end{proof}

\begin{remark}\label{rmk2.1}
Assume that $Q$ is a full subquiver of $Q'$. We compare cluster variables in various cluster algebras.

For simplicity, we denote the vertex sets $Q_0=\{1,\dots,n\}$ and $Q'_0=\{1,\dots,n'\}$. By definition, the cluster algebra with coefficients, denoted $\mathcal{A}(Q,Q')$, is generated by only those cluster variables in $\mathcal{A}(Q')$ obtained from iteratively mutating the initial cluster variables $x_1,\dots,x_n$ only at vertices in $\{1,\dots,n\}$. (Vertices in $\{n+1,\dots,n'\}$ are called frozen vertices. The coefficients are in $\mathbb{Z}[x_{n+1}^{\pm1},\dots,x_{n'}^{\pm1}]$.) Thus, there is a natural bijection sending a cluster variable in $\mathcal{A}(Q,Q')$ of the form $x[d_1,\dots,d_n]$ to the cluster variable $x[d_1,\dots,d_n,0,\dots,0]\in \mathcal{A}(Q')$ of the same expression.

There is also a natural bijection sending a cluster variable $x[d_1,\dots,d_n]\in\mathcal{A}(Q,Q')$ to the cluster variable $x[d_1,\dots,d_n]\in\mathcal{A}(Q)$, given by substituting $x_i$ by $1$ for $i\in [n+1,n']$. More generally, if $Q$ is a full subquiver of $P$, and $P$ is a full subquiver of $Q'$, then there is a natural bijection sending a cluster variable $x[d_1,\dots,d_n]\in\mathcal{A}(Q,Q')$ to $x[d_1,\dots,d_n]\in\mathcal{A}(Q,P)$ given by substituting $x_i$ by $1$ for $i\in Q'_0\setminus P_0$.

If $Q$ is a full subquiver of $Q''$, and $Q'$ is the vertex-induced subquiver of $Q''$ whose vertex set consists of vertices in $Q_0$ and those adjacent to $Q_0$, then the cluster variables in $\mathcal{A}(Q,Q')$ have the same expressions as those in $\mathcal{A}(Q,Q'')$.
\end{remark}

\section{Globally Compatible Collections} \label{GCCs}

In this section, we give three formulas for the cluster monomial $x[{\bf a}]$ for ${\bf a}\in\mathcal{W}$. All results here will be proved in \S6. 
These formulas extend the results on the equioriented type $A$ quivers given in \cite{B9}. For general type $A$ quivers, a new situation we need to handle is the appearance of 3-cycles.

First we reduce to a special case. We can replace ${\bf a}$ by $[{\bf a}]_+$, thus can assume $a_i\ge 0$, because of Lemma \ref{a to a+}. Moreover, for any edge $i\to j$ of $Q$ that is not in a 3-cycle, we can add a vertex $k$ and two arrows $j\to k$ and $k\to i$ (the vertex $k$ is a frozen vertex). Indeed, assume the modified quiver is $Q'$. Then by Remark \ref{rmk2.1}, once we have a formula for cluster monomials for $Q'$, we can set $x_i=1$ for all $i\in Q'_0\setminus Q_0$ and obtain a formula for cluster monomials for $Q$. In the rest of the paper, we assume that ${\bf a}=[{\bf a}]_+$ and
\begin{equation}\label{assumptionQ}
\textrm{$Q$ is of type $A$ with more than one vertex, and every edge of $Q$ is in a 3-cycle.}
\end{equation}

\subsection{A formula using 0-1 sequences}\label{subsection3.1}
Fix a deg-2 vertex $i_0$ of $Q$ (which exists because of \eqref{assumptionQ}). For $i\in Q_0$,  denote by $d(i)$ be the distance  (i.e., the length of the shortest directed path) from $i_0$ to $i$. Let ${\bf s}_i=(s_{i,1},s_{i,2},\dots,s_{i,a_i})\in\{0,1\}^{a_i}$ be a 0-1 sequence, and define
$$|{\bf s}_i|=\sum_{r=1}^{a_i} s_{i,r}\; ,\quad\quad |\bar{\bf s}_i|=\sum_{r=1}^{a_i} (1-s_{i,r})=a_i-|{\bf s}_i|.$$
We say that a sequence of 0-1 sequences ${\bf s}:=({\bf s}_1,\dots,{\bf s}_n)$ is a \emph{globally compatible sequence (abbreviated GCS)} if  the following holds for any 3-cycle $i\to j\to k\to i$:
\begin{itemize}[leftmargin=20pt, itemsep=5pt]
\item If $d(i)<d(j)<d(k)$, then $(s_{i,t},s_{j,t})\neq (1,0)$  for $1\le t\le \sigma_{ijk}$;
\item If $d(j)<d(k)<d(i)$, then $(s_{i,a_i+1-t},s_{j,a_j+1-t})\neq (1,0)$ for $1\le t\le \sigma_{ijk}$;
\item If $d(k)<d(i)<d(j)$, then $(s_{i,a_i+1-t},s_{j,t})\neq (1,0)$ for $1\le t\le \sigma_{ijk}$.
\end{itemize}

\begin{theorem}\label{thm:01sequence} For any \textbf{d}-vector ${\bf a}=(a_1,\dots,a_n)\in\mathbb{Z}_{\ge0}^n$ (i.e., ${\bf a}\in\mathcal{W}\cap \mathbb{Z}_{\ge0}^n$), we have the following formula for the corresponding cluster monomial:
\begin{equation}\label{formula1}
x[\mathbf{a}]=\left(\prod_{l=1}^n  x_l^{-a_l}\right)
\sum_{\mathbf{s}}\left(\prod_{i}  x_{i}^{e_i}\right),\;\textrm{  where }
e_i=\sum_{i\to j}|\,\bar{\mathbf{s}}_j\,|+\sum_{k\to i} |\mathbf{s}_k|-\sum_{i\to j\to k\to i}\sigma_{jki},
\end{equation}
and ${\bf s}$ runs through all GCSs. 
\end{theorem}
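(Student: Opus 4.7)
The plan is to reduce the monomial formula to the case of cluster variables via the pipeline decomposition of \S\ref{constpipelines}, and then invoke the cluster variable formula to be proved later (Theorem \ref{MainThm}, equivalently reformulated in terms of GCSs on a linear full subquiver). Concretely, by the construction of the set of pipelines associated to ${\bf a}$,
\[
x[{\bf a}] \;=\; \prod_{\Lambda} x[{\bf b}_\Lambda],
\]
where $\Lambda$ runs over the pipelines in the set associated to ${\bf a}$ and ${\bf b}_\Lambda \in \{0,1\}^n$ is the sequence defined in \eqref{b}. Each factor $x[{\bf b}_\Lambda]$ is a cluster variable corresponding to a linear full subquiver of $Q$, so it is governed by a single-variable version of the formula.

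The next step is to decompose GCSs across pipelines. Each entry $s_{i,r}$ in a GCS should be interpreted as labelling the $r$-th pipeline (in the ordering used in Steps 2 and 3 of the pipeline construction) crossing $T_i$. I would define, for each pipeline $\Lambda$, the induced 0-1 sequence ${\bf s}^{(\Lambda)}$ on the support of ${\bf b}_\Lambda$ by picking out the entries $s_{i,r_\Lambda(i)}$, and show that this yields a bijection
\[
\{\text{GCSs } {\bf s}\text{ for }{\bf a}\} \;\longleftrightarrow\; \prod_\Lambda \{\text{GCSs for }{\bf b}_\Lambda\}.
\]
The three cases in the definition of a GCS at a 3-cycle $i\to j\to k\to i$ are designed so that the constraint on positions $1\le t\le \sigma_{ijk}$ is exactly the constraint on those pipelines that enter the triangle at the vertex shared by $T_i$ and $T_j$ and therefore traverse both diagonals (with the same local index $t$); pipelines that do not use both $T_i$ and $T_j$ inside the triangle fall outside the range $[1,\sigma_{ijk}]$ under the chosen ordering, so they receive no additional constraint.

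Next, I would verify that the exponents decompose additively. Using $a_l=\sum_\Lambda b_{\Lambda,l}$, the first two contributions $\sum_{i\to j}|\bar{\bf s}_j|$ and $\sum_{k\to i}|{\bf s}_k|$ to $e_i$ split pipeline-by-pipeline in an obvious way. The role of the correction $-\sum_{i\to j\to k\to i}\sigma_{jki}$ is to remove the double-counting contributed by a pipeline that ``turns the corner'' through a 3-cycle: such a pipeline contributes once to a term indexed by $i\to j$ and once to a term indexed by $k\to i$, whereas in the single-variable formula for $x[{\bf b}_\Lambda]$ it should contribute only once. After this cancellation, $e_i$ decomposes as $\sum_\Lambda e_i^{(\Lambda)}$ where $e_i^{(\Lambda)}$ is the analogous exponent for the cluster variable $x[{\bf b}_\Lambda]$. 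Multiplying the single-variable formulas over all $\Lambda$ and applying the bijection of the previous step then collects the product into the desired sum over GCSs for ${\bf a}$, completing the proof modulo Theorem \ref{MainThm}.

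The main obstacle is the combinatorial bookkeeping in the second step: the orderings $r_\Lambda(i)$ of pipelines crossing each $T_i$ must be chosen consistently across all diagonals so that, at every 3-cycle, the initial $\sigma_{ijk}$ indices align with the pipelines that cross both $T_i$ and $T_j$ inside the triangle. This reduces to matching the three cases in the GCS definition (depending on the relative order of $d(i), d(j), d(k)$) with the three placements of the ``distinguished vertex'' $i_0$ around the triangle in the pipeline picture; the orientation convention ``ordered by increasing distance from the common endpoint'' in Step 2 of the pipeline construction is precisely what makes this match work. Once this identification is set up carefully, the remainder of the argument is a direct bookkeeping check.
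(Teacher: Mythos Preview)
Your approach is correct and is essentially the one the paper takes: reduce to cluster variables via the pipeline decomposition of \S\ref{constpipelines}, then invoke Theorem~\ref{MainThm}. The only organizational difference is that the paper factors the argument through Theorem~\ref{thm:GCCxa}: first it carries out the pipeline decomposition at the level of GCCs (\S6.1), showing that GCCs for ${\bf a}$ biject with tuples of GCCs for the ${\bf b}_\Lambda$ and that the exponents and $\sigma$-corrections split accordingly; then (\S6.2) it gives an explicit index-by-index bijection between GCSs and GCCs under which formulas \eqref{formula1} and \eqref{formula2} coincide. Your proposal collapses these two steps into one, doing the pipeline decomposition directly on GCSs. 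The ``main obstacle'' you flag---choosing the ordering $r_\Lambda(i)$ consistently so that the first $\sigma_{ijk}$ indices at each 3-cycle pick out exactly the pipelines traversing both $T_i$ and $T_j$---is precisely what the paper handles by the index reversals $r\leftrightarrow a_i+1-r$ in the GCS$\leftrightarrow$GCC dictionary of \S6.2, keyed to the trichotomy on $d(i),d(j),d(k)$.
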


This formula specializes to the formula given in \cite{B9} for a linear quiver.

\subsection{A formula using Dyck paths}\label{subsection3.2}
We recall the following definition from \cite{LLM}.

Let $(a_1, a_2)$ be a pair of nonnegative integers. Let $c=\min(a_1,a_2)$.
The \emph{maximal Dyck path} of type $a_1\times a_2$, denoted by $\mathcal{D}=\mathcal{D}^{a_1\times a_2}$, is a lattice path
from $(0, 0)$  to $(a_1,a_2)$ that is as close as possible to the diagonal joining $(0,0)$ and $(a_1,a_2)$, but never goes above it. A \emph{corner} is a subpath consisting of a horizontal edge followed by a vertical edge.

\begin{definition}\label{corner-first}
Let $\mathcal{D}_1$ (resp.~$\mathcal{D}_2$) be the set of horizontal (resp.~vertical) edges of a maximal Dyck path $\mathcal{D}=\mathcal{D}^{a_1\times a_2}$. We label $\mathcal{D}$ with the \emph{corner-first index} in the following sense:
\vspace{-5pt}
\begin{enumerate}[label=(\alph*), leftmargin=*, itemsep=5pt]
\item edges in $\mathcal{D}_1$ are indexed as $u_1,\dots,u_{a_1}$ such that $u_i$ is the horizontal edge of the $i$-th corner for $i\in [1,c]$ and $u_{c+i}$ is the $i$-th of the remaining horizontal ones for $i\in [1, a_1-c]$,
\item edges in $\mathcal{D}_2$ are indexed as $v_1,\dots, v_{a_2}$ such that $v_i$ is the vertical edge of the $i$-th corner for $i\in [1,c]$ and $v_{c+i}$ is the $i$-th  of the remaining vertical ones for $i\in [1, a_2-c]$.
\end{enumerate}
\noindent (Here we count corners from bottom left to top right, count vertical edges from bottom to top, and count horizontal edges from left to right.)
\end{definition}

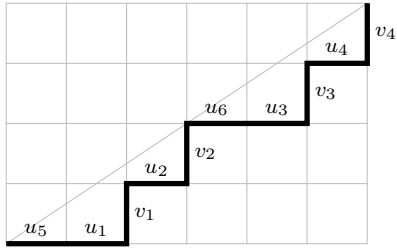
\begin{figure}[h!]
\begin{tikzpicture}[MyPic,scale=0.8]
\tikzstyle{every node}=[font=\tiny]
\tsomepoints

\draw (0,0) grid (6,4);
\draw (0,0) -- (6,4);
\draw [SoThick]
(P00) -- (P10)  node[midway,above] {$u_5$} --
(P10) -- (P20)  node[midway,above] {$u_1$} --
(P20) -- (P21)  node[midway,right] {$v_1$} --
(P21) -- (P31)  node[midway,above] {$u_2$} --
(P31) -- (P32)  node[midway,right] {$v_2$} --
(P32) -- (P42)  node[midway,above] {$u_6$} --
(P42) -- (P52)  node[midway,above] {$u_3$} --
(P52) -- (P53)  node[midway,right] {$v_3$} --
(P53) -- (P63)  node[midway,above] {$u_4$} --
(P63) -- (P64)  node[midway,right] {$v_4$};
\end{tikzpicture}
\caption{A maximal Dyck path}
\end{figure}

\begin{definition}\label{local_compatibility}
Let  $S_1\subseteq \mathcal{D}_1$, $S_2 \subseteq \mathcal{D}_2$, $s\in\mathbb{Z}_{\ge0}$. We say that $S_1$ and $S_2$ are \emph{$s$-compatible} if for every $1\le r\le s$, either $u_r\notin S_1$ or $v_r\notin S_2$. In other words, neither of the first $s$ corners are in the subpath $S_1\cup S_2$.
\end{definition}

Note that the following definition of global compatibility is different from \cite{LLM}.

\begin{definition}\label{definition:GCC}
Let ${\bf a}=(a_1,\dots,a_n)\in\mathbb{Z}_{\ge0}^n$  be a \textbf{d}-vector.
For each pair $i\to j$ in $Q$, let $\mathcal{D}^{(i,j)}$ be a maximal Dyck path $\mathcal{D}^{a_i\times a_j}$. We label  $\mathcal{D}^{(i,j)}$ with the corner-first index (described in Definition \ref{corner-first}), whose horizontal edges are denoted $u_{1}^{(i,j)},\dots,u_{a_{i}}^{(i,j)}$ and vertical edges are denoted by $v_{1}^{(i,j)},\dots,v_{a_{j}}^{(i,j)}$.
We say that the collection
$$\Big\{S_\ell^{(i,j)} \subseteq\mathcal{D}^{(i,j)}_\ell\;\Big|\;  i\to j
\textrm{ is an arrow}, \; \ell\in\{1,2\}\Big\}$$ is a \emph{globally compatible collection (abbreviated GCC)} if and only if for any $k\to i\to j$ in $Q$:
\begin{itemize}[leftmargin=20pt, itemsep=5pt]
\item if $j\to k$ is also an arrow in $Q$, then $S^{(i,j)}_1$ and $S^{(i,j)}_2$ are $\sigma_{ijk}$-compatible,
and
$$v^{(k,i)}_r\in S^{(k,i)}_2\Longleftrightarrow u^{(i,j)}_{a_i+1-r}\not\in S^{(i,j)}_1, \textrm{ for all $r\in[1,a_i]$};$$

\item otherwise,   $$v^{(k,i)}_r\in S^{(k,i)}_2\Longleftrightarrow u^{(i,j)}_r\not\in S^{(i,j)}_1,\textrm{ for all $r\in[1,a_i]$}.$$
\end{itemize}
\end{definition}

\begin{theorem}\label{thm:GCCxa} Assume $n>1$. For any \textbf{d}-vector ${\bf a}=(a_1,\dots,a_n)\in\mathbb{Z}_{\ge0}^n$, we have the following formula for the corresponding cluster monomial:
\begin{equation}\label{formula2}
x[\mathbf{a}]=\left(\prod_{l=1}^n  x_l^{-a_l}\right)
\sum\left(\prod_{i\to j}  x_{i}^{\big|S^{(i,j)}_2\big|} x_{j}^{\big|S^{(i,j)}_1\big|}\right)\cdot \prod_{i\to j\to k\to i} x_i^{-\sigma_{jki}},
\end{equation}
where the sum runs over all GCCs.  (Note that because of the rotational symmetry,  each 3-cycle contributes three terms  to the last product.)
\end{theorem}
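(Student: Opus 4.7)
The plan is to derive (\ref{formula2}) from the GCS formula (\ref{formula1}) by establishing a weight-preserving bijection $\Phi$ between the set of GCSs and the set of GCCs associated to the same \textbf{d}-vector ${\bf a}$. Since Theorem \ref{thm:01sequence} identifies $x[{\bf a}]$ with the GCS sum, a termwise match of the two sums will yield (\ref{formula2}); no further algebra is needed once the bijection is set up.

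To construct $\Phi$, for each arrow $i \to j$ I would let $u_r^{(i,j)} \in S_1^{(i,j)}$ be governed by an entry of ${\bf s}_i$ and $v_r^{(i,j)} \in S_2^{(i,j)}$ be governed by the complementary entry of ${\bf s}_j$, identifying the corner-first index of each Dyck edge with a position of the associated 0-1 sequence---either directly or after the reversal $r \mapsto a_i + 1 - r$, according to whether $i \to j$ participates in a 3-cycle and according to the distance ordering $d(i), d(j), d(k)$ used in the GCS definition. The corner-first indexing is exactly what makes this work: it places the first $\sigma_{ijk}$ corners of $\mathcal{D}^{(i,j)}$ at positions $1, \ldots, \sigma_{ijk}$, so the $\sigma_{ijk}$-compatibility required by a GCC becomes precisely $(s_{i,t}, s_{j,t}) \neq (1,0)$ (or one of its reversed variants) for $1 \le t \le \sigma_{ijk}$, matching the appropriate case of the GCS definition. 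A direct count then gives $|S_1^{(i,j)}| = |{\bf s}_i|$ and $|S_2^{(i,j)}| = |\bar{\bf s}_j|$, so the exponent of $x_i$ in the GCC summand,
\begin{equation*}
\sum_{i \to j} |S_2^{(i,j)}| + \sum_{k \to i} |S_1^{(k,i)}| - \sum_{i \to j \to k \to i} \sigma_{jki},
\end{equation*}
equals the GCS exponent $e_i$ of (\ref{formula1}). Hence each GCS contributes the same monomial as its image GCC, and summing yields (\ref{formula2}).

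I expect the main obstacle to lie in verifying internal consistency at vertices lying in multiple 3-cycles---in particular the degree-$4$ vertices of $Q$, which by the Buan--Vatne classification belong to exactly two 3-cycles. At such an $i$, a single outgoing arrow $i \to j$ can participate in a 3-cycle with one incoming arrow $k_1 \to i$ but not with another incoming arrow $k_2 \to i$; the two GCC equivalences $v^{(k,i)}_r \in S_2^{(k,i)} \iff u^{(i,j)}_r \notin S_1^{(i,j)}$ (no 3-cycle) and $v^{(k,i)}_r \in S_2^{(k,i)} \iff u^{(i,j)}_{a_i+1-r} \notin S_1^{(i,j)}$ (3-cycle) then appear to prescribe the indicator of $S_1^{(i,j)}$ as both a direct copy of ${\bf s}_i$ and as its reversal. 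Tracing through the three distance-ordering cases of the GCS definition alongside the two cases of the GCC definition, one must check that the reversal conventions are in fact compatible. I expect the key point to be that the two 3-cycles at a degree-$4$ vertex share only that vertex, so the identifications distribute consistently across outgoing arrows, and that the distance function $d(\cdot)$ in the GCS is designed exactly to match the pattern of reversals imposed by the GCC.
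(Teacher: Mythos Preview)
Your bijection $\Phi$ between GCSs and GCCs is essentially the same as the one the paper writes down in \S6.2, and your weight computation $|S_1^{(i,j)}|=|{\bf s}_i|$, $|S_2^{(i,j)}|=|\bar{\bf s}_j|$ is correct. However, the paper uses this bijection in the \emph{opposite direction}: it proves Theorem~\ref{thm:GCCxa} first and then deduces Theorem~\ref{thm:01sequence} from it via $\Phi$. In the paper's logical structure Theorem~\ref{thm:01sequence} has no independent proof, so deriving (\ref{formula2}) from (\ref{formula1}) as you propose would be circular.

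The paper's own argument for Theorem~\ref{thm:GCCxa} is quite different. Writing $x'[{\bf a}]$ for the right side of (\ref{formula2}), it shows: (i) the GCCs for ${\bf a}$ are in bijection with tuples of GCCs for the pipeline vectors ${\bf b}={\bf b}_\Lambda$ of \S\ref{constpipelines}; (ii) under this bijection the weights multiply, so $x'[{\bf a}]=\prod_{\bf b} x'[{\bf b}]$; and (iii) each factor $x'[{\bf b}]$ equals the cluster variable $x[{\bf b}]$ by specializing to the completely extended linear quiver and invoking Theorem~\ref{MainThm}, which in turn was proved bijectively via perfect matchings in \S\ref{PerfectMatchingsGCCs}. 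So the paper's route is perfect matchings $\Rightarrow$ Theorem~\ref{MainThm} $\Rightarrow$ Theorem~\ref{thm:GCCxa} $\Rightarrow$ Theorem~\ref{thm:01sequence}, whereas you attempt the last arrow in reverse. Your approach would be a legitimate shortcut \emph{if} one had an independent proof of the GCS formula, but none is supplied here.

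Your instinct about degree-$4$ vertices is on target: that is exactly where one must check that the reversal conventions in the GCC definition (the two cases of Definition~\ref{definition:GCC} for a path $k\to i\to j$) are simultaneously satisfiable by a single assignment coming from ${\bf s}_i$. The paper's \S6.2 also asserts this is ``easy to check'' without giving the case analysis; the point is that under assumption~(\ref{assumptionQ}) each arrow lies in exactly one 3-cycle, and the distance function $d$ forces the orientation-and-reversal pattern at $i$ to be consistent across the two 3-cycles meeting there.
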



\subsection{A more explicit formula for cluster variables}\label{subsection3.3}
The third method of computing the cluster monomial with given \textbf{d}-vector is to  first decompose the cluster monomial into a product of cluster variables using pipelines and then use a formula for cluster variables using GCCs.

For given \textbf{d}-vector, we construct pipelines as in the subsection \ref{constpipelines}. Then each pipeline corresponds to a cluster variable. We give the GCC formula of the cluster variable $x[{\bf a}]$ in a completely extended linear quiver $(Q,Q')$, where ${\bf a}=(a_1,\dots,a_{n'})$ such that $a_i=1$ if $i\in Q$, and $a_i=0$ otherwise.
For any arrow $(i+\delta_i)\to (i+1-\delta_i)$ of $Q$ we attach a Dyck path $\mathcal{D}^{(i)}=\mathcal{D}^{1\times 1}$, which consists of one horizontal edge and one vertical edge. (Recall that $\delta_i$ is defined in \S\ref{subsection:Linear quivers}.) Then Definition \ref{definition:GCC} specializes to the following:

Let $S_{i,r}\subseteq\D^{(i)}_r$ for $i\in[1,n-1]$, $r\in[1,2]$. We say that the collection $\{S_{i,r}\}$ is a \emph{GCC} if
\begin{equation}\label{GCCRule}
(|S_{i,1}|,|S_{i,2}|)\neq(1,1) \textrm{ for } i\in[1,n-1], \textrm{ and the following holds for $i\in [2,n-1]$:}
\end{equation}

\begin{enumerate}[label=(\alph*), itemsep=5pt]
\item if $(\delta_{i-1},\delta_{i})=(0,0)$, then $|S_{i-1,2}|\neq|S_{i,1}|$;
\item if $(\delta_{i-1},\delta_{i})=(1,1)$, then $|S_{i-1,1}|\neq|S_{i,2}|$;
\item if $(\delta_{i-1},\delta_{i})=(0,1)$, then $|S_{i-1,2}|=|S_{i,2}|$;
\item if $(\delta_{i-1},\delta_{i})=(1,0)$, then $|S_{i-1,1}|=|S_{i,1}|$.
\end{enumerate}

\begin{theorem} \label{MainThm}
Let $(Q,Q')$ be a completely extended linear quiver, and $n=|Q_0|>1$, $n'=|Q'_0|$.
Define ${\bf a}_Q=(a_1,\dots,a_{n'})$ such that $a_i=1$ if $i\in Q$, and $a_i=0$ otherwise.
Then the cluster variable with \textbf{d}-vector ${\bf a}_Q$ is
\begin{equation}\label{formula3}
x[\bfa_Q]=\left(\prod_{i=1}^n x_{i}^{-1}\right) \sum \left(\prod_{i=0}^{\lqv} y_i \right),
\end{equation}
where the sum runs over all GCCs $\{S_{i,r}\}$,
$$y_i:=x_{i+\delta_i}^{|S_{i,2}|} x_{i+1-\delta_i}^{|S_{i,1}|}x_{i,i+1}^{1-|S_{i,1}|-|S_{i,2}|}=
\left\{\begin{array}{lll}
x_{i}, & \text{if} & (|S_{i,1}|, |S_{i,2}|)=(\delta_i,1-\delta_i) \xh
x_{i+1}, & \text{if} & (|S_{i,1}|, |S_{i,2}|)=(1-\delta_i,\delta_i) \xh
x_{i,i+1}, & \text{if} & (|S_{i,1}|, |S_{i,2}|)=(0,0) \xh
\end{array}\right.
$$
for $i\in[1, n-1]$, and
$$y_0:=
\begin{cases}
x_{1,0}, \textrm{ if  }|S_{1,1+\delta_1}|=1-\delta_1, \\
x_{1,1}, \textrm{ otherwise,}\\
\end{cases}
\quad
y_n:=
\begin{cases}
x_{n,0}, \textrm{ if }|S_{n-1,2-\delta_{n-1}}|=\delta_{n-1}, \\
x_{n,1}, \textrm{ otherwise}.\\
\end{cases}$$
\end{theorem}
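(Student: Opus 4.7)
The plan is to prove Theorem \ref{MainThm} by establishing a weight-preserving bijection between the set of GCCs satisfying \eqref{GCCRule} and (a)--(d) and the set of perfect matchings of the snake graph associated to the cluster variable $x[\mathbf{a}_Q]$, then invoking the Musiker--Schiffler--Williams (MSW) perfect-matching expansion formula.

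First I would identify the snake graph. Since $\mathbf{a}_Q$ has a unique pipeline (by the construction of \S\ref{constpipelines}), it corresponds to a unique diagonal $D$ in the $(2n+3)$-gon crossing exactly $T_1,\ldots,T_n$, and the MSW construction produces a snake graph with $n$ tiles $G_1,\ldots,G_n$. The tile $G_i$ is the dual quadrilateral of the two triangles sharing $T_i$; its four edges carry the labels of the other sides of these triangles, namely $x_{i-1}, x_{i+1}, x_{i-1,i}, x_{i,i+1}$, with the convention that the end tiles pick up the labels $x_{1,0}, x_{1,1}, x_{n,0}, x_{n,1}$ coming from the 3-cycles attached at $v_1$ and $v_n$. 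Adjacent tiles $G_i$ and $G_{i+1}$ share the edge labeled $x_i$ or $x_{i+1}$, with the relative orientation determined by $\delta_i$.

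Next I would define the bijection $\Phi$. A perfect matching of this snake graph selects exactly $n+1$ edges covering each vertex once. Given a GCC $\{S_{i,r}\}$, for each $i\in[1,n-1]$ the three admissible pairs $(|S_{i,1}|,|S_{i,2}|)\in\{(\delta_i,1-\delta_i),(1-\delta_i,\delta_i),(0,0)\}$ (the fourth being forbidden by \eqref{GCCRule}) correspond respectively to selecting the edge labeled $x_i$, the edge labeled $x_{i+1}$, or the ``middle'' edge labeled $x_{i,i+1}$ in tile $G_i$; the tail prescriptions for $y_0$ and $y_n$ fix the two remaining edges on the boundary tiles. I would then check that the local compatibility conditions (a)--(d) encode exactly the requirement that the shared vertex between $G_{i-1}$ and $G_i$ is covered once, so $\Phi$ is a well-defined bijection onto perfect matchings.

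By construction each factor $y_i$ equals the label of the corresponding edge in $\Phi(\{S_{i,r}\})$, so $\prod_{i=0}^{n} y_i = w(\Phi(\{S_{i,r}\}))$. Dividing by the crossing monomial $\prod_{i=1}^n x_i$ and summing over all GCCs then recovers the MSW expansion of $x[\mathbf{a}_Q]$, proving \eqref{formula3}. The main obstacle will be the case analysis in verifying bijectivity: for each of the four combinations of $(\delta_{i-1},\delta_i)$, one must translate the numerical relation in (a)--(d) into the statement ``exactly one of the edges incident to the shared vertex is selected.'' The asymmetric endpoint formulas for $y_0$ and $y_n$ will require separate but analogous verifications on the two end caps, corresponding to the 3-cycles attached at $v_1$ and $v_n$.
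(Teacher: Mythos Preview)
Your proposal is correct and follows essentially the same approach as the paper: the paper proves Theorem~\ref{MainThm} via Theorem~\ref{PMsGCCs}, which constructs an explicit weight-preserving bijection $\psi_{\G,\M}$ between GCCs and perfect matchings of the snake diagram, then invokes the Musiker--Schiffler formula~\eqref{MS}. The paper's organizing device is Lemma~\ref{InTheSamePerfectMatching}, which shows that each perfect matching contains exactly one edge in each parallelogram $Pl^{(i)}$ (for $i\in[0,n]$); this gives a clean indexing of the $n+1$ matched edges by the $n+1$ factors $y_0,\ldots,y_n$, and the case analysis on $(\delta_{i-1},\delta_i)$ you anticipate is carried out exactly as you describe (with cases (b) and (d) omitted by symmetry). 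One small inaccuracy in your sketch: adjacent tiles $G_i$ and $G_{i+1}$ share the edge labeled $T_{i,i+1}$ (weight $x_{i,i+1}$), not an edge labeled $x_i$ or $x_{i+1}$; the edges labeled $x_i$ and $x_{i+1}$ are the other two sides of $Pl^{(i)}$, and it is this trichotomy within $Pl^{(i)}$ (not within a single tile) that corresponds to the three admissible values of $(|S_{i,1}|,|S_{i,2}|)$.
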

\begin{remark}\label{rmk 4.7}
Note that the above theorem induces a formula for the cluster variables of any type $A$ quiver,  
that is, any mutation-equivalent type $A$ quiver (possibly including some 3-cycles). Indeed, let $\tilde{Q}$ be any type $A$ quiver and $x[{\bf a}]$ be a non-initial cluster variable (the formulas for initial cluster variables are trivial) with \textbf{d}-vector ${\bf a}=(a_1,\dots,a_{n'})$, with  $n'=|\tilde{Q}_0|$. Then the subset of vertices $\{i\, |\, a_i=1\}$ is equal to the set of vertices $Q_0$ of a linear full subquiver $Q$. By relabeling vertices if necessary, we assume $Q_0=\{1,\dots,n\}$ (thus $a_1=\cdots=a_n=1$ and $a_{n+1}=\cdots=a_{n'}=0$ and for convenience we denote ${\bf a}=1^n0^{n'-n}$. By removing vertices in $\tilde{Q}_0$ but not in or adjacent to $Q_0$, we get an extended linear quiver $(Q,P)$; from this extended linear quiver we can obtain a completely extended linear quiver $Q'$. Define $n'=|Q'_0|$, $m=|P_0|$. Thanks to Remark \ref{rmk2.1}, a formula for cluster variable  $x[1^n0^{n'-n}]\in \mathcal{A}(Q,Q')$ induces a formula for $x[1^n0^{m-n}]\in \mathcal{A}(Q,P)$ by substituting $x_i=1$ for $i\in Q'_0\setminus P_0$, which is also a formula for $x[{\bf a}]=x[1^n0^{n'-n}]\in \mathcal{A}(\tilde{Q})$. (See Example \ref{example62}.)
\end{remark}

\begin{remark} We also give a formula for cluster variables of any type $A$ quiver in terms of GCS as follows. Let $\tilde{Q}$ be a type $A$ quiver and $Q$ be a linear full subquiver of $\tilde{Q}$, as in Remark \ref{rmk 4.7}. Define 
\begin{equation}\label{df:a}
{\bf a}_Q=(a_i), \quad  \textrm{ where }
a_i= \left\{\begin{array}{lr}
        1, \quad \text{if $i\in Q_0$;}\\
        0, \quad \text{if $i\notin Q_0$.}\\
        \end{array}\right.
\end{equation}
Relabelling vertices if necessary, we assume that $Q$ is $1\longleftrightarrow 2 \longleftrightarrow\cdots \longleftrightarrow n$ where each arrow can go either direction.
We will give a formula of the cluster variable $x[{\bf a}_Q]$.

Let ${\bf s}=(s_i)\in\{0,1\}^{n'}$, $0\le s_i\le a_i$ for every $1\le i\le n'$.  We say that ${\bf s}$ is a GCS if  $(s_i,s_j)\neq (1,0)$ for every arrow $i\to j$ in $Q_1$. Because $s_i=0$ for $i>n$, by abuse of notation we also view ${\bf s}$ as an element in $\{0,1\}^n$. 

Let $\deg^-_Q(i)$ be the number of 
arrows in $\tilde{Q}_1$ whose tails are in $Q_0$ and heads are the vertex $i$, let $\deg^+_Q(i)$ be the number of 
arrows in $\tilde{Q}_1$ whose heads are in $Q_0$ and tails are the vertex $i$.
Let $K$ be the set of vertices $k\in \tilde{Q}_0\setminus Q_0$ such that there exists a 3-cycle $k\to i\to j\to k$ with $s_i=s_j=1$; or equivalently,
$$K=\{k\in \tilde{Q}_0\setminus Q_0\; | \; \deg^-_Q(k)=\deg^+_Q(k)=1\}.$$
Denote $\bar{s_i}=a_i-s_i$. We have the following formula, where arrows $i\to j$ are in $\tilde{Q}$:
\begin{equation}\label{cluster variable GCS formula}
x[\mathbf{a}_Q]
=\left(\prod_{r\in Q_0}  x_r^{-1}\right)
\frac{\sum_{\mathbf{s}}\left(\prod_{i\to j}  x_i^{\bar{s}_j}x_j^{s_i}\right) }{ \prod_{k\in K}x_k}
=
\sum_{\mathbf{s}} z_{\bf s},\quad \textrm{where } z_{\bf s}=\frac{\prod_{i\to j}  x_i^{\bar{s}_j}x_j^{s_i}}{\prod_{r\in Q_0\cup K}  x_r}
\end{equation}
and ${\bf s}$ runs through all GCSs. 

\end{remark}
\section{A bijection between perfect matchings and GCCs} \label{PerfectMatchingsGCCs}
In this section, we first recall the construction of snake diagram and the formula of cluster variables using perfect matching as in \cite{MS}, then give a bijective proof of Theorem \ref{MainThm} via perfect matchings.

Associated to a completely extended linear quiver $(Q,Q')$, we recursively construct the snake diagram by gluing $n$-tiles together as follows: we first put the $2^{\text{nd}}$-tile to the right side of the $1^{\text{st}}$-tile;  suppose the \th{i}-tile is placed, we add the \th{(i+1)}-tile to the right side or on top of the \th{i}-tile such that the \th{(i-1)}, \th{(i)} and \th{(i+1)}-tiles are in the same row or column if and only if  $\delta_{i-1}\neq\delta_i$.

Next, we label the edges as follows.
\begin{itemize}[itemsep=5pt]
\item The common edge of the \th{i}-tile and the \th{(i+1)}-tile is labeled $T_{i,i+1}$.
\item Denote by $Pl^{(i)}$ the parallelogram bounded by the main diagonals of the \th{i}-tile and the \th{(i+1)}-tile and two boundary edges. Any edge forming an angle of $135^\circ$ with the main diagonal of the \th{i}-tile will be labeled $T_i^{(j)}$ (where $j$ indicates the parallelogram to which the edge belongs).
\item For convenience, we let $Pl^{(0)}$ be the right triangle with legs $T_{1,0}$ and $T_{1,1}$, and let $Pl^{(n)}$ the right triangle with legs $T_{n,0}$ and $T_{n,1}$.
\item The edges of the first and the last tiles are labeled as in Figure \ref{FirstLastTiles}.
\end{itemize}

\begin{figure}[h!]
\begin{tikzpicture}[MyPic,scale=1.7]
\tikzstyle{every node}=[font=\tiny]
\tsomepoints
	
\draw (P00) -- (P10) node[midway,below] {$T_{i-2}^{(i-2)}$};
\draw (P11) -- (P10) node[midway,above,sloped] {$T_{i-1,i}$};
\draw (P11) -- (P01) node[midway,above] {$T_i^{(i-1)}$};
\draw (P01) -- (P00);
\node at (0.5,0.5) [fill=lightgray] {$i-1$};
	
\draw (P10) -- (P20) node[midway,below] {$T_{i-1}^{(i-1)}$};
\draw [very thick, line width=3pt] (P20) -- (P21) node[midway,right] {$T_{i+1}^{(i)}$};
\draw (P21) -- (P11) node[midway,above] {$T_{i,i+1}$};
\draw (P11) -- (P20);
\draw [very thick, line width=3pt] (P11) -- (P20);
\node at (1.5,0.5) [fill=lightgray] {$i$};
	
\draw (P21) -- (P22) node[midway,right] {$T_{i+2}^{(i+1)}$};
\draw (P22) -- (P12);
\draw [very thick, line width=3pt] (P12) -- (P11) node[left,near start] {$T_i^{(i)}$};
\draw [very thick, line width=3pt] (P12) -- (P21);
\node at (1.5,1.5) [fill=lightgray] {$i+1$};

\node [above,font=\normalsize] at (1,-1) {$\delta_{i-1}=\delta_i$};
	
\begin{scope}[shift={(4,0)}]
\tikzstyle{every node}=[font=\tiny]
\tsomepoints
	
\draw (P00) -- (P10);
\draw (P11) -- (P10) node[midway,above,sloped] {$T_{i-1,i}$};
\draw (P11) -- (P01) node[midway,above] {$T_i^{(i-1)}$};
\draw (P01) -- (P00);
\node at (0.5,0.5) [fill=lightgray] {$i-1$};
	
\draw (P10) -- (P20) node[midway,below] {$T_{i-1}^{(i-1)}$};
\draw (P21) -- (P20) node[midway,above,sloped] {$T_{i,i+1}$};
\draw [very thick, line width=3pt] (P21) -- (P11) node[midway,above] {$T_{i+1}^{(i)}$};
\draw [very thick, line width=3pt] (P11) -- (P20);
\node at (1.5,0.5) [fill=lightgray] {$i$};
	
\draw [very thick, line width=3pt] (P20) -- (P30) node[midway,below] {$T_i^{(i)}$};
\draw (P30) -- (P31);
\draw (P31) -- (P21);
\draw [very thick, line width=3pt] (P21) -- (P30);
\node at (2.5,0.5) [fill=lightgray] {$i+1$};
	
\node [above,font=\normalsize] at (1.5,-1) {$\delta_{i-1}\ne \delta_i$};
\end{scope}
\end{tikzpicture}
\caption{Labels of edges in \th{(i-1)}, \th{i} and \th{(i+1)}-tiles in two cases}
\label{labels}
\end{figure}
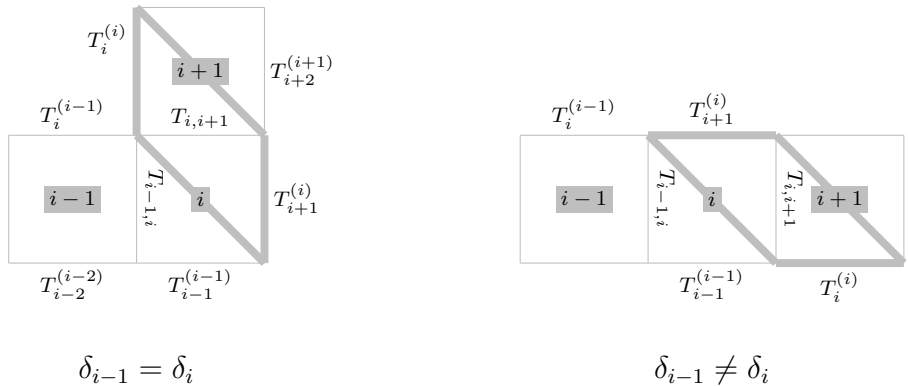

\begin{figure}[h!]
	\begin{tikzpicture}[MyPic,scale=1.4]
	\tikzstyle{every node}=[font=\small]
	\tsomepoints
	
	\draw (P00) -- (P10) node[midway,below=2pt] {$T_{1,\delta_1}$};
	\draw (P10) -- (P11) node[midway,right] {$T_{1,2}$};
	\draw (P11) -- (P01) node[midway,above] {$T_2^{(1)}$};
	\draw (P01) -- (P00) node[midway,left] {$T_{1,1-\delta_1}$};
	\node at (0.5,0.5) [fill=lightgray] {$1$};
	\node [font=\normalsize] at (0.5,-1) {First tile};
	\node [font=\normalsize] at (6.5,-1) {Last tile};
	\node [font=\normalsize] at (6.5,0.5) {or};
	
	\begin{scope}[shift={(4,0)}]
	\tikzstyle{every node}=[font=\small]
	\tsomepoints
	
	\draw (P00) -- (P10) node[midway,below=2pt] {$T_{\lqv-1}^{(\lqv-1)}$};
	\draw (P10) -- (P11) node[midway,right] {$T_{\lqv,\delta_{n-1}}$};
	\draw (P11) -- (P01) node[midway,above] {$T_{\lqv,1-\delta_{n-1}}$};
	\draw (P01) -- (P00) node[midway,left] {$T_{\lqv-1,\lqv}$};
	\node at (0.5,0.5) [fill=lightgray] {$\lqv$};
	\end{scope}
	
	\begin{scope}[shift={(8,0)}]
	\tikzstyle{every node}=[font=\small]
	\tsomepoints
	
	\draw (P00) -- (P10) node[midway,below=2pt] {$T_{\lqv-1,\lqv}$};
	\draw (P10) -- (P11) node[midway,right] {$T_{\lqv,1-\delta_{n-1}}$};
	\draw (P11) -- (P01) node[midway,above] {$T_{\lqv,\delta_{n-1}}$};
	\draw (P01) -- (P00) node[midway,left] {$T_{\lqv-1}^{(\lqv-1)}$};
	\node at (0.5,0.5) [fill=lightgray] {$\lqv$};
	\end{scope}
	\end{tikzpicture}
	\caption{Labels of edges in the first and last tiles}
	\label{FirstLastTiles}
\end{figure}
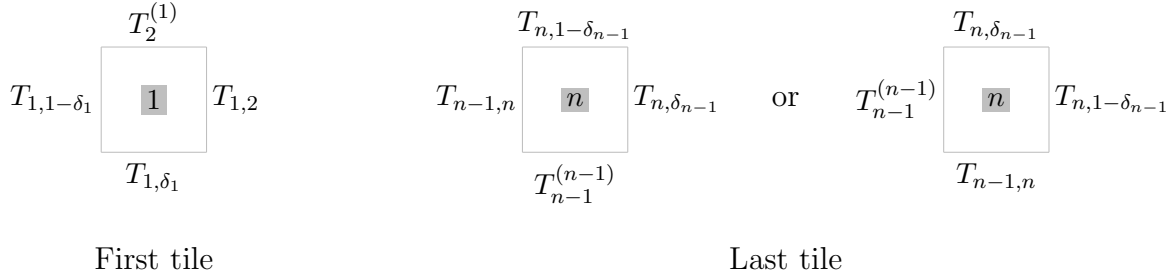

A perfect matching of the snake diagram is a set of edges such that each vertex is incident to exactly one edge in the set.

\begin{figure}[h!]
\begin{tikzpicture}[MyPic,scale=1.4]
\tikzstyle{every node}=[font=\tiny]
\tsomepoints

\draw (P00) -- (P10);
\draw [SoThick] (P10) -- (P11);
\draw (P11) -- (P01);
\draw [SoThick] (P01) -- (P00);

\draw (P10) -- (P20);
\draw [SoThick] (P20) -- (P21);
\draw (P21) -- (P11);

\draw (P21) -- (P22);
\draw [SoThick] (P22) -- (P12);
\draw (P12) -- (P11);
\end{tikzpicture}
\caption{The associated snake diagram of the quiver $1\to2\to3$ with a perfect matching.}
\end{figure}

For the fixed completely extended linear quiver $(Q,Q')$, let $\M$ be the set of all perfect matchings in the associated snake diagram, and $\G$ be the set of all GCCs. We shall construct a bijective map $\psi_{\M,\G}: \M\to \G$ and its inverse $\psi_{\G,\M}$. First we prove a simple lemma.
\begin{lemma} \label{InTheSamePerfectMatching}
For any perfect matching $\gamma$ and $i\in [0,n]$, there is exactly one edge of $\gamma$ lies in $Pl^{(i)}$.
\end{lemma}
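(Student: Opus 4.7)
My plan is to prove $|\gamma \cap Pl^{(i)}| = 1$ for all $i \in [0,n]$ by induction on $i$, via a vertex-counting argument.

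First I would record the structural facts. For $i \in [1,n-1]$, $Pl^{(i)}$ is a parallelogram with four vertices (the endpoints of the main diagonals of tiles $i$ and $i+1$) and three snake edges, $T_i^{(i)}$, $T_{i,i+1}$, $T_{i+1}^{(i)}$, which form a path on these four vertices. Two of the four vertices are shared with $Pl^{(i-1)}$ (the endpoints of the main diagonal of tile $i$) and the other two with $Pl^{(i+1)}$. The triangular end-pieces $Pl^{(0)}$ and $Pl^{(n)}$ each have a distinguished ``tip'' vertex belonging to no other parallelogram, together with two vertices shared with the adjacent parallelogram. Crucially, each snake edge lies in exactly one parallelogram, and every snake edge incident to a vertex $v$ of $Pl^{(i)}$ lies in one of the (at most two) parallelograms containing $v$; in particular a left-shared vertex of $Pl^{(i)}$ can only be matched by a $\gamma$-edge lying in $Pl^{(i-1)}$ or $Pl^{(i)}$, and analogously for right-shared vertices.

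For the base case $i = 0$, the tip of tile $1$ must be covered by $\gamma$, and its only two incident snake edges are $T_{1,0}$ and $T_{1,1}$, both in $Pl^{(0)}$ and sharing the tip; hence exactly one is in $\gamma$, so $|\gamma \cap Pl^{(0)}| = 1$. That matching edge covers the tip plus exactly one of the two vertices of $Pl^{(0)} \cap Pl^{(1)}$. Writing $\beta_i$ for the number of vertices in $Pl^{(i)} \cap Pl^{(i+1)}$ covered by $\gamma$-edges inside $Pl^{(i)}$, this gives $\beta_0 = 1$.

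For the inductive step, fix $i \in [1, n-1]$ and assume $\beta_{i-1} = 1$. The number of left-shared vertices of $Pl^{(i)}$ covered by $\gamma$-edges inside $Pl^{(i-1)}$ equals $\beta_{i-1}$ (both counts refer to the same vertex set $Pl^{(i-1)} \cap Pl^{(i)}$ and the same matching edges), so $2 - \beta_{i-1} = 1$ left-shared vertex is covered by a $\gamma$-edge inside $Pl^{(i)}$. Similarly, $\beta_i$ right-shared vertices of $Pl^{(i)}$ are covered inside $Pl^{(i)}$. Since each matching edge inside $Pl^{(i)}$ covers exactly two vertices of $Pl^{(i)}$,
\[
    2 \cdot |\gamma \cap Pl^{(i)}| \;=\; (2 - \beta_{i-1}) + \beta_i \;=\; 1 + \beta_i.
\]
As $\beta_i \in \{0, 1, 2\}$ and $|\gamma \cap Pl^{(i)}|$ is a nonnegative integer, the unique solution is $|\gamma \cap Pl^{(i)}| = 1$ and $\beta_i = 1$, propagating the inductive hypothesis. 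The case $i = n$ follows symmetrically from the tip-vertex argument for tile $n$, with $\beta_{n-1} = 1$ providing the required consistency.

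The main point of care will be in verifying the structural facts above---in particular, that every vertex of an interior $Pl^{(i)}$ lies in exactly two consecutive parallelograms and has all its incident snake edges confined to those two---which requires a brief inspection across the different $\delta$-configurations but is routine.
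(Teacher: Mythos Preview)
Your proof is correct and takes a genuinely different route from the paper's. The paper argues for each $i\in[1,n-1]$ independently by contradiction: if the number of $\gamma$-edges in $Pl^{(i)}$ is not $1$, then either both $T_i^{(i)}$ and $T_{i+1}^{(i)}$ lie in $\gamma$ (the only way to have two disjoint edges on the three-edge path) or none of the three edges does; in either situation, deleting $Pl^{(i)}$ splits the snake into two pieces each with an odd number of vertices that would still be perfectly matched, which is impossible. Your argument instead performs a left-to-right sweep, carrying the invariant $\beta_i=1$ (exactly one of the two shared vertices between $Pl^{(i)}$ and $Pl^{(i+1)}$ is covered from the left) and reading off $|\gamma\cap Pl^{(i)}|=1$ from the parity of $1+\beta_i$. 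The paper's proof is shorter and local (each $i$ is handled in isolation), whereas yours yields the extra structural information $\beta_i=1$ for all $i$, which is essentially the ``flow'' condition underlying the subsequent encoding of $\gamma$ as $(\gamma_0,\dots,\gamma_n)$. The structural facts you rely on---each snake edge lies in exactly one $Pl^{(i)}$, and each vertex lies in at most two consecutive ones with all incident edges confined there---are indeed routine to verify from the tile-gluing rules across the $\delta$-configurations.
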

\begin{proof}
The statement is obviously true for $i=0$ and $i=n$.
Suppose the statement is false for some $i\in [1,n-1]$. Since $\gamma$ is a perfect matching, we are in one of the following two cases.

Case 1: both $T_i^{(i)}, T_{i+1}^{(i)}$ are in $\gamma$. If we remove $Pl^{(i)}$ (4 vertices and 3 edges), then the rest graph has two components which have odd number of vertices and have perfect matchings. This is a contradiction.

Case 2: none of  the three edges $T_i^{(i)}, T_{i+1}^{(i)}, T_{i,i+1}$ lies in $\gamma$. Then we remove the three edges (but do not remove the vertices) and apply the same argument as in Case 1.
\end{proof}

\begin{remark}
Thanks to the above lemma, we can write a perfect matching $\gamma$ as $\{\gamma_1,\dots,\gamma_n\}$ where $\gamma_i\in Pl^{(i)}$.
\end{remark}

\begin{definition}-
(i) We define a map $\psi_{\M,\G}:\M\to\G$ by sending $\gamma\in \M$ to $\{S_{i,r}\}\in\G$ such that
for $i\in[1,\lqv-1]$,
$$(|S_{i,1}|, |S_{i,2}|)
=\begin{cases}
(\delta_i,1-\delta_i), & \text{ if } T_i^{(i)}\in\gamma,\\
(1-\delta_i,\delta_i), & \text{ if } T_{i+1}^{(i)}\in\gamma,\\
(0,0), & \text{ if } T_{i,i+1}\in\gamma.\\
\end{cases}$$
(By Lemma \ref{InTheSamePerfectMatching}, exactly one of the three cases occurs.)

(ii) We define a map $\psi_{\G,\M}:\G\to\M$ by sending $\{S_{i,r}\}\in\G$ to the set of edges $\gamma=\{\gamma_0,\gamma_1,\ldots,\gamma_n\}$ such that
$$\gamma_i=\left\{\begin{array}{lll}
T^{(i)}_{i}, & \text{if} & (|S_{i,1}|, |S_{i,2}|)=(\delta_i,1-\delta_i) \xh
T^{(i)}_{i+1}, & \text{if} & (|S_{i,1}|, |S_{i,2}|)=(1-\delta_i,\delta_i) \xh
T_{i,i+1}, & \text{if} & (|S_{i,1}|, |S_{i,2}|)=(0,0) \xh
\end{array}\right.$$
for $i\in[1,\lqv-1]$, and

$\gamma_0=\left\{\begin{array}{lll}
T_{1,0} & \text{if  } |S_{1,1+\delta_1}| = 1-\delta_1,\\
T_{1,1} & \text{otherwise},
\end{array}\right.$ \quad
$\gamma_n=\left\{\begin{array}{lll}
T_{\lqv,0} & \text{ if }  |S_{n-1,2-\delta_{n-1}}|=\delta_{n-1},\\
T_{\lqv,1} & \text{ otherwise}.
\end{array}\right.$

\end{definition}

We assign a weight $w(u)$ for each edge $u$ of the snake diagram as follows for all $i,j$:
\begin{equation}\label{weight w}
w(T^{(i)}_j)=x_j,\quad w(T_{j,j+1})=x_{j,j+1}
\end{equation}
For a perfect matching $\gamma=(\gamma_0,\dots,\gamma_n)$, define its weight $w(\gamma)=\prod_{i=0}^n w(\gamma_i)$. In \cite{MS} it is proved that the cluster variable with \textbf{d}-vector ${\bf a}$ is
\begin{equation}\label{MS}
x[\bfa]=\left(\prod_{i=1}^n x_{i}^{-1}\right) \sum_\gamma w(\gamma).
\end{equation}

\begin{theorem}\label{PMsGCCs}
The maps $\psi_{\M,\G}$ and $\psi_{\G,\M}$ are well-defined and are inverses of each other. Moreover, $w(\gamma_i)=y_i$, thus $\psi_{\M,\G}$ induces a bijective proof of Theorem \ref{MainThm} using \eqref{MS}.
\end{theorem}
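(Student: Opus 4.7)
The plan is to verify the three claims of the theorem in turn: well-definedness of $\psi_{\M,\G}$, well-definedness of $\psi_{\G,\M}$, the fact that they are mutual inverses, and finally the weight equality $w(\gamma_i)=y_i$.

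First, I would observe that by Lemma \ref{InTheSamePerfectMatching}, any perfect matching $\gamma\in\M$ decomposes uniquely as $\{\gamma_0,\gamma_1,\ldots,\gamma_n\}$ with $\gamma_i\in Pl^{(i)}$. For $1\le i\le n-1$ the edge $\gamma_i$ must be one of the three candidates $T_i^{(i)}$, $T_{i+1}^{(i)}$, $T_{i,i+1}$, and these three options correspond bijectively to the three allowed values $(\delta_i,1-\delta_i)$, $(1-\delta_i,\delta_i)$, $(0,0)$ of $(|S_{i,1}|,|S_{i,2}|)$ in a GCC. In particular, this trinary labelling makes the two maps tautological inverses of each other at the level of a single tile, and the condition $(|S_{i,1}|,|S_{i,2}|)\neq(1,1)$ in \eqref{GCCRule} is automatic for $\psi_{\M,\G}(\gamma)$.

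To finish well-definedness of both maps, I would then carry out a case analysis on $(\delta_{i-1},\delta_i)$ for each $i\in[2,n-1]$, comparing conditions (a)--(d) of \eqref{GCCRule} with the vertex-sharing geometry between $Pl^{(i-1)}$ and $Pl^{(i)}$ inside tile $i$. For each of the four sign patterns $(\delta_{i-1},\delta_i)\in\{0,1\}^2$, I would use Figure \ref{labels} to identify which pairs $(\gamma_{i-1},\gamma_i)$ of candidate edges share a vertex of tile $i$ and are therefore forbidden in any perfect matching. Translating these forbidden pairs through the trinary correspondence above recovers exactly conditions (a)--(d). Because each translation is a logical equivalence, this simultaneously proves $\psi_{\M,\G}(\M)\subseteq\G$ and $\psi_{\G,\M}(\G)\subseteq\M$. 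Completely analogous but simpler reasoning handles the two boundary tiles, showing that the formulas for $\gamma_0,\gamma_n$ produce matchings at the two endpoints consistent with the formulas for $y_0,y_n$.

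The inverse property $\psi_{\G,\M}\circ\psi_{\M,\G}=\id_{\M}$ and $\psi_{\M,\G}\circ\psi_{\G,\M}=\id_{\G}$ then follows immediately from the case-by-case definitions, which are simply read in opposite directions. For the weight equality I would directly compute $w(\gamma_i)$ from \eqref{weight w}: if $(|S_{i,1}|,|S_{i,2}|)=(\delta_i,1-\delta_i)$ then $\gamma_i=T_i^{(i)}$ and $w(\gamma_i)=x_i=y_i$, and similarly for the other two tile cases and for the boundary tiles. Plugging into the Musiker--Schiffler formula \eqref{MS} and regrouping yields \eqref{formula3}, completing the bijective proof of Theorem \ref{MainThm}.

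The main obstacle is the case analysis for \eqref{GCCRule}(a)--(d). It requires precise bookkeeping of which vertices of tile $i$ are shared between the two adjacent parallelograms $Pl^{(i-1)}$ and $Pl^{(i)}$ in each of the four geometric configurations of $(\delta_{i-1},\delta_i)$, plus the two boundary cases. Once each configuration is correctly unpacked from Figure \ref{labels}, the translation between perfect-matching compatibility and GCC compatibility is essentially forced, but doing this cleanly across all cases is the bulk of the work.
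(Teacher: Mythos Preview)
Your approach is essentially the same as the paper's: a case analysis on $(\delta_{i-1},\delta_i)$ translating between vertex-disjointness of $\gamma_{i-1},\gamma_i$ and the GCC conditions (a)--(d), with the inverse and weight claims following directly from the definitions. The paper separates the two directions (using a degree-$2$ vertex argument for $\psi_{\M,\G}$ and an explicit enumeration for $\psi_{\G,\M}$) and makes explicit the counting observation that $n+1$ pairwise disjoint edges on $2n+2$ vertices form a perfect matching, a step your phrase ``simultaneously proves $\psi_{\G,\M}(\G)\subseteq\M$'' slightly elides but which is the only missing ingredient.
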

\begin{proof}


(i) We show that $\psi_{\M,\G}$ is well-defined, that is, $\psi_{\M,\G}(\gamma)=\{S_{i,r}\}$ satisfies the condition \eqref{GCCRule}.  It's clear from the construction that for every $i\in [1,n-1]$, $(|S_{i,1}|,|S_{i,2}|)\ne(1,1)$. Next, we prove (a) and (c) of \eqref{GCCRule}, since (b) and (d) can be proved similarly.

For (a), we suppose $(\delta_{i-1},\delta_{i})=(0,0)$ and need to show that $T^{(i-1)}_{i-1}\in\gamma\Leftrightarrow T^{(i)}_{i+1}\notin\gamma$. This is true because the two edges $T^{(i-1)}_{i-1}$ and $T^{(i)}_{i+1}$  are incident to the same deg-2 vertex, thus exactly one of them is in $\gamma$. (See the left diagram in Figure \ref{labels}.)

For (c), we suppose $(\delta_{i-1},\delta_i)=(0,1)$ and need to show that $T^{(i-1)}_{i-1}\in\gamma \Leftrightarrow T^{(i)}_{i+1}\in\gamma$. These two edges are opposite edges of a tile which is the middle of three tiles in a row or a column. Deleting these two edges will separate the snake diagram into two graphs with even number of vertices each. Thus the two edges must be both in $\gamma$ or not in $\gamma$. (See the right diagram in Figure \ref{labels}.)

\noindent (ii) We show that $\psi_{\G,\M}$ is well-defined, that is, $\psi_{\G,\M}(\{S_{i,r}\})=\gamma$ is a perfect matching. Since the snake diagram has $2n+2$ vertices and $\gamma$ has $n+1$ edges, it suffices to show that all edges in $\gamma$ are disjoint. We assume the contrary that $\gamma_c$ shares a vertex with $\gamma_d$ for some $0\le c<d\le n$. Since $\gamma_c\in Pl^{(c)}$ and  $\gamma_d\in Pl^{(d)}$, $Pl^{(c)}$ and $Pl^{(d)}$ much be consecutive, thus $d=c+1$.

We first assume $1\le c\le n-2$. We shall discuss two cases $(\delta_c,\delta_{c+1})=(0,0)$ and $(0,1)$, and omit the other two cases $(1,0)$ and $(1,1)$ since the proof is similar.

\noindent Case $(\delta_c,\delta_{c+1})=(0,0)$: since $\{S_{i,r}\}$ is a GCC, we must have
$(|S_{c,1}|,|S_{c,2}|,|S_{c+1,1}|,|S_{c+1,2}|)=(0,1,0,1)$, $(0,1,0,0)$, $(1,0,1,0$, or $(0,0,1,0)$. Correspondingly,
$$(\gamma_c, \gamma_{c+1})=(T_c^{(c)}, T_{c+1}^{(c+1)}), (T_{c}^{(c)}, T_{c+1,c+2}), (T_{c+1}^{(c)}, T_{c+2}^{(c+1)}), \textrm{ or } (T_{c,c+1}, T_{c+2}^{(c+1)}).$$ It is obvious from Figure \ref{cc+1c+2} that $\gamma_c$ and $\gamma_{c+1}$ are disjoint, a contradiction as expected.

\begin{figure}[h!]
\begin{tikzpicture}[MyPic,scale=1.5]
\tikzstyle{every node}=[font=\tiny]
\tsomepoints
	
\draw (P00) -- (P10);
\draw (P11) -- (P10) node[midway,above,sloped] {$T_{c,c+1}$};
\draw (P11) -- (P01) node[midway,above] {$T_{c+1}^{(c)}$};
\draw (P01) -- (P00);
\node at (0.5,0.5) [fill=lightgray] {$c$};
	
\draw (P10) -- (P20) node[midway,below] {$T_c^{(c)}$};
\draw (P20) -- (P21) node[midway,right] {$T_{c+2}^{(c+1)}$};
\draw (P21) -- (P11) node[midway,above] {$T_{c+1,c+2}$};
\node at (1.5,0.5) [fill=lightgray] {$c+1$};
	
\draw (P21) -- (P22);
\draw (P22) -- (P12);
\draw (P12) -- (P11) node[near start,left] {$T_{c+1}^{(c+1)}$};
\node at (1.5,1.5) [fill=lightgray] {$c+2$};


\begin{scope}[shift={(4,0)}]
\tikzstyle{every node}=[font=\tiny]
\tsomepoints

\draw (P00) -- (P10);
\draw (P11) -- (P10) node[midway,above,sloped] {$T_{c,c+1}$};
\draw (P11) -- (P01) node[midway,above] {$T_{c+1}^{(c)}$};
\draw (P01) -- (P00);
\node at (0.5,0.5) [fill=lightgray] {$c$};

\draw (P10) -- (P20) node[midway,below] {$T_c^{(c)}$};
\draw (P21) -- (P20) node[midway,above,sloped] {$T_{c+1,c+2}$};
\draw (P21) -- (P11) node[midway,above] {$T_{c+2}^{(c+1)}$};
\node at (1.5,0.5) [fill=lightgray] {$c+1$};

\draw (P20) -- (P30) node[midway,below] {$T_{c+1}^{(c+1)}$};
\draw (P30) -- (P31);
\draw (P31) -- (P21);
\node at (2.5,0.5) [fill=lightgray] {$c+2$};

\end{scope}
\end{tikzpicture}
\caption{Left: $(\delta_c,\delta_{c+1})=(0,0)$ and Right: $(\delta_c,\delta_{c+1})=(0,1)$}
\label{cc+1c+2}
\end{figure}
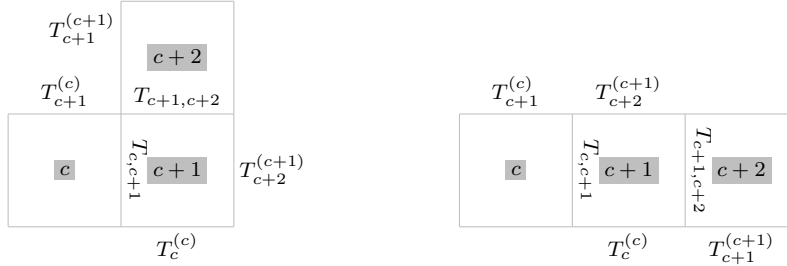

\noindent Case $(\delta_c,\delta_{c+1})=(0,1)$: similar as the above case,
$$(\gamma_c, \gamma_{c+1}) = (T_{c}^{(c)}, T_{c+2}^{(c+1)}), (T_{c+1}^{(c)}, T_{c+1}^{(c+1)}), (T_{c+1}^{(c)}, T_{c+1,c+2}), (T_{c,c+1}, T_{c+1}^{(c+1)}) \text{ or } (T_{c,c+1}, T_{c+1,c+2}).$$
We get the expected contradiction by observing Figure \ref{cc+1c+2}.

The cases of $c=0$ and $c=n-1$ are proved by a similar argument.

\noindent(iii) The fact that $\psi_{\M,\G}$ and $\psi_{\G,\M}$ are inverse of each other, and $w(\gamma_i)=y_i$, follows easily from their definitions.
\end{proof}

\begin{example}
If $\gamma=\{T_1,T_3,T_6,T_8,T_{10}\}$ then $\psi_{\M,\G}(\gamma)=((0,1),(0,0),(1,0))$ as shown in Figure \ref{exMapMG}.

\begin{figure}[h!]
\begin{tikzpicture}[MyPic,scale=1.3]
	\tikzstyle{every node}=[font=\tiny]
	\tsomepoints
	
	\draw (P00) -- (P10) node[midway,below] {$T_5$};
	\draw (P10) -- (P11) node[midway,right] {$T_7$};
	\draw (P11) -- (P01) node[midway,above] {$T_2^{(1)}$};
	\draw [SoThick] (P01) -- (P00) node[midway,left] {$T_6$};
	
	\draw [SoThick] (P10) -- (P20) node[midway,below] {$T_1^{(1)}$};
	\draw (P20) -- (P21) node[midway,right] {$T_3^{(2)}$};
	\draw [SoThick] (P21) -- (P11) node[midway,above] {$T_8$};
	
	\draw (P21) -- (P22) node[midway,right] {$T_4^{(3)}$};
	\draw (P22) -- (P12) node[midway,above] {$T_9$};
	\draw (P12) -- (P11) node[midway,left,near start] {$T_2^{(2)}$};
	
	\draw [SoThick] (P22) -- (P23) node[midway,right] {$T_{10}$};
	\draw (P23) -- (P13) node[midway,above] {$T_{11}$};
	\draw [SoThick] (P13) -- (P12) node[midway,left] {$T_3^{(3)}$};
	
\begin{scope}[shift={(3,1.5)},scale=1]
\node (v1) at (0:0) {};
\node (v2) at (0:2) {};

\draw [|->,red] (v1) -- (v2) node[midway,above,font=\normalsize] {$\psi_{\M,\G}$};
\end{scope}

\begin{scope}[shift={(4,1.2)},scale=0.8]
\tsomepoints
\tikzstyle{every node}=[font=\normalsize]

\draw (P20) -- (P30) node[midway,below] {$x_2$};
\draw [SoThick] (P30) -- (P31) node[midway,right] {$x_1$};

\draw (P40) -- (P50) node[midway,below] {$x_3$};
\draw (P50) -- (P51) node[midway,right] {$x_2$};

\draw (P60) -- (P61) node[midway,right] {$x_4$};
\draw [SoThick] (P60) -- (P70) node[midway,below] {$x_3$};

\end{scope}
\end{tikzpicture}
\caption{An example of the map $\psi_{\M,\G}$}
\label{exMapMG}
\end{figure}
\end{example}

\section{Proof of Main Theorems \ref{thm:01sequence} and \ref{thm:GCCxa}}
We have explained in \S\ref{subsection3.3} that,  in order to compute cluster variables, it suffices to have the formula for a completely extended linear quiver, namely Theorem \ref{MainThm}. This theorem follows from Theorem \ref{PMsGCCs}.
 In this section, we show how to derive Theorem \ref{thm:01sequence} and Theorem \ref{thm:GCCxa} from Theorem \ref{MainThm}.

\subsection{Proof of Theorem \ref{thm:GCCxa}}
Let $x'[{\bf a}]$ be the right hand side of the formula in Theorem \ref{thm:GCCxa}.
We shall show that (i) the GCCs for the \textbf{d}-vector ${\bf a}$ are in one-to-one correspondence with the collections of GCCs for the \textbf{d}-vectors ${\bf b}$'s described in \eqref{b}; (ii)  $x'[{\bf a}]=\prod_{\bf b} x'[{\bf b}]$; and (iii) $x'[{\bf b}]=x[{\bf b}]$ using Theorem \ref{MainThm}.
It then follows that $x'[{\bf a}]=x[{\bf a}]$.

(i) Let $\{S^{(i,j)}_\ell\}$ be any GCC for the \textbf{d}-vector ${\bf a}$. For each pipeline $\Lambda$, let ${\bf b}={\bf b}_\Lambda$, we construct a GCC $\{S^{(i,j),\Lambda}_\ell\}$ for the \textbf{d}-vector ${\bf b}$ by requiring the following for each arrow $i\to j$:

-- if $\Lambda$ intersects the edge $i$ at the $r$-th marked point, then $|S^{(i,j),\Lambda}_1|=1$ if and only if $u^{(i,j)}_r\in S^{(i,j)}_1$,

-- if $\Lambda$ intersects the edge $j$ at the $r$-th marked point, then $|S^{(i,j),\Lambda}_2|=1$ if and only if $v^{(i,j)}_r\in S^{(i,j)}_2$,

\noindent (in both case the marked points are ordered in the increasing distance from the common endpoint of $i$ and $j$).

To verify that $\{S^{(i,j),\Lambda}_\ell\}$ is a GCC for the \textbf{d}-vector ${\bf b}$, we need to check the conditions in Definition \ref{definition:GCC}. The only nontrivial condition to check is that for a 3-cycle $k\to i\to j\to k$, $S^{(i,j),\Lambda}_1$ and $S^{(i,j),\Lambda}_2$ are $\sigma_{ijk}^{\bf b}$-compatible. That reduces to showing that $(|S^{(i,j),\Lambda}_1|,|S^{(i,j),\Lambda}_1|)\neq(1,1)$ in the case $(b_i,b_j,b_k)=(1,1,0)$. In this case, $\Lambda$ intersects edges $i$ and $j$ at the $r$-th marked points for some $r\le \sigma_{ijk}^{\bf a}$. Then either $u^{(i,j)}_r\notin S^{(i,j)}_1$ or $v^{(i,j)}_r\notin S^{(i,j)}_2$. In the former case, $|S^{(i,j),\Lambda}_1|=0$; in the latter case, $|S^{(i,j),\Lambda}_2|=0$. Therefore $(|S^{(i,j),\Lambda}_1|,|S^{(i,j),\Lambda}_2|)\neq(1,1)$.

It is easy to see that a unique GCC $\{S^{(i,j)}_\ell\}$ is determined if we take any collection of GCCs $\{S^{(i,j),\Lambda}_\ell \}$ for all pipelines $\Lambda$. So we have the desired one-to-one correspondence.

(ii)  We show that $\prod_{\bf b} x'[{\bf b}]=x'[{\bf a}]$. Since $\sum {\bf b}={\bf a}$, it suffices to show that, for each GCC $\{S^{(i,j)}_\ell\}$, letting $\{S^{(i,j),\Lambda}_\ell\}$ be defined as in (i), the following holds (recall that ${\bf b}={\bf b}_\Lambda$ depends on $\Lambda$):
$$
\prod_\Lambda\left(\prod_{i\to j}  x_{i}^{\big|S^{(i,j),\Lambda}_2\big|} x_{j}^{\big|S^{(i,j),\Lambda}_1\big|} \cdot \prod_{i\to j\to k\to i} x_i^{-\sigma^{\bf b}_{jki}}\right)
=
\prod_{i\to j}  x_{i}^{\big|S^{(i,j)}_2\big|} x_{j}^{\big|S^{(i,j)}_1\big|}\cdot \prod_{i\to j\to k\to i} x_i^{-\sigma^{\bf a}_{jki}}.
$$
Since the left hand side is equal to
$$
\prod_{i\to j}  x_{i}^{\sum_\Lambda\big|S^{(i,j),\Lambda}_2\big|} x_{j}^{\sum_\Lambda\big|S^{(i,j),\Lambda}_1\big|} \cdot \prod_{i\to j\to k\to i} x_i^{-\sum_\Lambda\sigma^{\bf b}_{jki}},$$
it suffices to show that $\sum_\Lambda |S^{(i,j),\Lambda}_2| = |S^{(i,j)}_2|$,  $\sum_\Lambda |S^{(i,j),\Lambda}_1| = |S^{(i,j)}_1|$, and $\sum_\Lambda \sigma^{\bf b}_{jki}=\sigma^{\bf a}_{jki}$. The first two are clear. To show the last equality:  first note that if $\Lambda$ is disjoint from the edge $j$, then $b_j=0$ and thus $\sigma^{\bf b}_{jki}=0$. So we only need to consider those $\Lambda$'s that intersect $j$. Let $\Lambda_r$ ($1\le r\le a_j$) be the pipeline that intersects $j$ at the $r$-th marked point (ordered in the increasing distance to the common endpoint of $j$ and $k$). If $r\le \sigma^{\bf a}_{jki}$, then $(b_j,b_k,b_i)=(1,1,0)$, thus $\sigma^{\bf b}_{ijk}=1$; otherwise, either $b_j=0$ or $b_k=0$, thus  $\sigma^{\bf b}_{ijk}=0$. Therefore $\sum_\Lambda \sigma^{\bf b}_{jki}=\sigma^{\bf a}_{jki}$.

(iii) We show that $x'[{\bf b}]=x[{\bf b}]$. By Remark \ref{rmk2.1}, it suffices to show that, in the setting of Theorem \ref{MainThm}, the right hand side of \eqref{formula3} is equal to $x'[{\bf a}]$. It breaks down to show that, for $i'\in[0,n]$, the following equality holds for the $i'$-th 3-cycle $i\to j\to k\to i$ in $Q'$
(for $i'\in[1,n-1]$, the $i'$-th 3-cycle is the one that contains vertices $v_i$, $v_{i+1}$ and $v_{i,i+1}$;  the $0$-th 3-cycle is $v_1\to v_{1,0}\to v_{1,1}\to v_1$; the $n$-th 3-cycle is $v_n\to v_{n,0}\to v_{n,1}\to v_n$):
\begin{equation}\label{eq:proof6.2}
\Big(x_{i}^{\big|S^{(i,j)}_2\big|} x_{j}^{\big|S^{(i,j)}_1\big|}\Big)
\Big(x_{j}^{\big|S^{(j,k)}_2\big|} x_{k}^{\big|S^{(j,k)}_1\big|}\Big)
\Big(x_{k}^{\big|S^{(k,i)}_2\big|} x_{i}^{\big|S^{(k,i)}_1\big|}\Big)
\cdot
x_i^{-\sigma_{jki}}
x_j^{-\sigma_{kij}}
x_k^{-\sigma_{ijk}}
=y_{i'}
\end{equation}
We shall only prove the case when $i'\in [1,n-1]$ and $\delta_{i'}=0$, because other cases can be proved in a similar way. In this case, the $i'$-th 3-cycle is $v_i\to v_{i+1}\to v_{i,i+1}\to v_i$ (where $i=i'$), and the left hand side of \eqref{eq:proof6.2} is equal to
$$\Big(x_{i}^{\big|S_{i,2}\big|} x_{j}^{\big|S_{i,1}\big|}\Big)
\Big(x_{j}^{0} x_{k}^{1-\big|S_{i,2}\big|}\Big)
\Big(x_{k}^{1-\big|S_{i,1}\big|} x_{i}^{0}\Big)
\cdot
x_k^{-1}
=
x_{i}^{\big|S_{i,2}\big|} x_{j}^{\big|S_{i,1}\big|}
x_{k}^{1-\big|S_{i,1}\big|-\big|S_{i,2}\big|}=y_{i'}.
$$
So \eqref{eq:proof6.2} holds.

\subsection{Proof that Theorem \ref{thm:GCCxa} implies Theorem \ref{thm:01sequence}}
We give a bijection between GCSs and GCCs. Let ${\bf s}$ be a GCS. Consider a 3-cycle $i\to j\to k\to i$, labeled in the way that $d(k)<d(i)<d(j)$. Then we define
$$\aligned
&S^{(k,i)}_1=\{u_r\in \mathcal{D}^{(k,i)}_1 \big| s_{k,r}=1\},\quad\quad
S^{(k,i)}_2=\{v_r\in \mathcal{D}^{(k,i)}_2 \big| s_{i,r}=0\}\\
&S^{(i,j)}_1=\{u_r\in \mathcal{D}^{(i,j)}_1 \big| s_{i,a_i+1-r}=1\},\quad
S^{(i,j)}_2=\{v_r\in \mathcal{D}^{(i,j)}_2 \big| s_{j,r}=0\}\\
&S^{(j,k)}_1=\{u_r\in \mathcal{D}^{(j,k)}_1 \big| s_{j,a_j+1-r}=1\},\quad
S^{(j,k)}_2=\{v_r\in \mathcal{D}^{(j,k)}_2 \big| s_{k,a_k+1-r}=0\}\\
\endaligned
$$
It is then easy to check that the conditions of GCSs and GCCs, as well as the two theorems, are equivalent under this bijection.

\section{A bijection between GCSs and broken lines}\label{S7}
This section is devoted to the construction of a bijection between GCSs and broken lines in the even rank case (Theorem \ref{MainThm2}) and the general case (Theorem \ref{MainThm2'}). 
In \S\ref{subsection:g-vectors}, we give a description of the g-vector of a cluster variable, which will determine the initial direction of the broken lines. In \S\ref{subsection:Scattering diagrams and broken lines}, we recall the necessary facts on scattering diagrams, broken lines, and theta functions. The rest of the section is to state and prove Theorem \ref{MainThm2} and \ref{MainThm2'}.
 
In this section,  we let $\tilde{Q}$ be a type $A$ quiver and $Q$ be a linear full subquiver of $\tilde{Q}$. 
Relabelling vertices if necessary, we assume that $Q$ is 
\begin{equation}\label{labelQ1ton}
\textrm{$Q$: \quad $1\longleftrightarrow 2 \longleftrightarrow\cdots \longleftrightarrow n$\quad (each arrow can go either direction).}
\end{equation}

\subsection{g-vectors} \label{subsection:g-vectors}

It is shown in \cite[Theorem 7.5 (4)]{GHKK} that, if $m$ is the g-vector of a cluster variable, then the cluster variable is exactly $\vartheta_m$. So we first study g-vectors.

\begin{lemma}\label{lem:g}
Let ${\bf a}_Q$ be defined as in \eqref{df:a}. Then the g-vector $m_0=(g_r)$ of the cluster variable $x[{\bf a}_Q]$ is given by
$$ g_r= \left\{\begin{array}{lr}
        \deg_Q^-(r)-1, \quad \text{if $r\in Q_0$;}\\
        1, \quad \text{if $r\notin Q_0$  and $(\deg_Q^+(r),\deg_Q^-(r))=(0,1)$;}\\
        0, \quad \text{if $r\notin Q_0$ and  $(\deg_Q^+(r),\deg_Q^-(r))\neq(0,1)$.}\\
        \end{array}\right.
$$        
\end{lemma}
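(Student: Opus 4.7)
My strategy is to compute the $g$-vector by identifying the term $\mathbf{x}^g := x_1^{g_1}\cdots x_{n'}^{g_{n'}}$ in the Laurent expansion of $x[\mathbf{a}_Q]$ given by the GCS formula \eqref{cluster variable GCS formula}. Specifically, I will argue that the GCS $\mathbf{s} = \mathbf{1}_Q$, defined by $s_r = 1$ for $r \in Q_0$ and $s_r = 0$ otherwise, corresponds to the $y$-free leading monomial in the principal-coefficient expansion of $x[\mathbf{a}_Q]$, and then read off $g_r$ from the $x$-exponents of $z_{\mathbf{1}_Q}$. Validity of $\mathbf{1}_Q$ as a GCS is immediate: every arrow $i \to j$ in $Q_1$ has both endpoints in $Q_0$, so $(s_i, s_j) = (1,1) \neq (1,0)$.

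Next I would compute $z_{\mathbf{1}_Q}$ directly. Since $\bar{s}_j = a_j - s_j = 0$ for every $j$, the numerator of $z_{\mathbf{s}}$ reduces to $\prod_{i \to j,\,i \in Q_0} x_j$, whose $x_r$-exponent equals $\deg_Q^-(r)$; combined with the denominator $\prod_{r \in Q_0 \cup K} x_r$, the exponent of $x_r$ in $z_{\mathbf{1}_Q}$ equals $\deg_Q^-(r) - [r \in Q_0 \cup K]$. To match this with the lemma, I will establish the structural claim that for every $r \notin Q_0$, both $\deg_Q^-(r) \leq 1$ and $\deg_Q^+(r) \leq 1$. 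If two distinct arrows $a \to r$ and $b \to r$ with $a, b \in Q_0$ existed, then either $a$ and $b$ are adjacent in $\tilde Q$, in which case the triangle on $\{a, b, r\}$ in the underlying graph has $r$ as a sink and hence cannot be an oriented $3$-cycle, contradicting Buan--Vatne; or $a, b$ are nonadjacent in $\tilde Q$, in which case the length-$2$ path $a \to r \leftarrow b$ combined with any path from $a$ to $b$ inside the linear $Q$ (internally disjoint from $r$ since $r \notin Q_0$) yields a simple cycle of length $\geq 4$ in the underlying graph of $\tilde Q$, again contradicting Buan--Vatne. A symmetric argument bounds $\deg_Q^+(r)$. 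With $(\deg_Q^+(r), \deg_Q^-(r)) \in \{0,1\}^2$ for $r \notin Q_0$, a short four-case check confirms that $\deg_Q^-(r) - [r \in K]$ equals the claimed $g_r$, and for $r \in Q_0$ the exponent $\deg_Q^-(r) - 1$ matches the formula directly.

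The main obstacle I anticipate is rigorously identifying $z_{\mathbf{1}_Q}$ with $\mathbf{x}^g$. My plan is to upgrade formula \eqref{cluster variable GCS formula} to principal coefficients and exploit the natural bijection between GCSs for $\mathbf{a}_Q$ and submodules $N_{\mathbf s}$ of the indecomposable Jacobian-algebra module $M_Q$ (supported on $Q_0$ with identity maps along arrows of $Q$): the GCS compatibility $(s_i, s_j) \neq (1,0)$ for $i \to j$ in $Q_1$ is exactly closure of $N_{\mathbf s}$ under the identity maps of $M_Q$. Under this correspondence, each principal-coefficient term equals $z_{\mathbf s} \cdot y^{\underline{\dim}(M_Q/N_{\mathbf s})}$, matching the Caldero--Chapoton form $\mathbf{x}^g F(\hat{\mathbf y})$, so $\mathbf{s} = \mathbf{1}_Q$ (which yields $N_{\mathbf s} = M_Q$ and hence quotient $0$) carries $y$-factor $1$ and equals $\mathbf{x}^g$. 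If this bijection becomes technically cumbersome to verify in the presence of $3$-cycles, a backup plan is to compute $g = [P^1] - [P^0]$ directly from the minimal projective presentation of $M_Q$ in the Jacobian algebra of $(\tilde Q, W)$ with $W$ the sum of $3$-cycle potentials, again using the structural claim above to determine the tops of $M_Q$ and of $\ker(P^0 \to M_Q)$.
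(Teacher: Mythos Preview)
Your computation of the exponent vector of $z_{\mathbf{1}_Q}$ and the subsequent case analysis are correct, and in fact coincide with what the paper does: both single out the GCS $\mathbf{s}=\mathbf{1}_Q$ (equivalently $\bar s_i=0$ for all $i$), compute the exponent of $x_r$ in $z_{\mathbf{1}_Q}$ as $\deg_Q^-(r)-[r\in Q_0\cup K]$, and then match this with the stated formula by splitting into the cases $r\in Q_0$ and $(\deg_Q^+(r),\deg_Q^-(r))\in\{0,1\}^2$ for $r\notin Q_0$. Your explicit Buan--Vatne argument bounding $\deg_Q^\pm(r)\le 1$ for $r\notin Q_0$ is a nice addition; the paper leaves this implicit.

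Where you diverge is in the justification that $z_{\mathbf{1}_Q}=\mathbf{x}^g$. The paper sidesteps the whole principal-coefficient/CC-formula apparatus you propose: it simply invokes the $g$-vector formula $g_\gamma=\deg\big(x(P_-)/\mathrm{cross}(T^\circ,\gamma)\big)$ from \cite[\S13.1]{MSW}, observes that the minimal matching $P_-$ corresponds under the bijection of Theorem~\ref{PMsGCCs} to the GCS $\mathbf{s}=\mathbf{1}_Q$, and reads off the degree. This is a one-line citation rather than an argument. Your primary plan---lifting \eqref{cluster variable GCS formula} to principal coefficients and identifying GCSs with submodules of the string module $M_Q$ so as to invoke the CC-form---is correct in principle and more self-contained, but it requires you to establish the principal-coefficient version of the GCS formula, which the paper never states. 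A cleaner shortcut in your framework is to note that the bijection $\G\leftrightarrow\mathcal{M}$ already built in \S\ref{PerfectMatchingsGCCs} sends $\mathbf{1}_Q$ to $P_-$, and then quote \cite{MSW} directly for the $g$-vector; this collapses your primary plan into the paper's proof. Your backup via projective presentations of $M_Q$ over the Jacobian algebra is also valid but is considerably more work than either route above.
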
	

\begin{proof}
We use the description of g-vectors in \cite[\S13.1]{MSW}:
$$g_\gamma=\deg\frac{x(P_-)}{{\rm cross }(T^\circ,\gamma)}$$
It is easy to verify that $P_-$ corresponds to the GCS ${\bf s}=(s_i)$ where $s_i=1$ if and only if $i\in Q$ (this is the unique GCS satisfying $\bar{s}_i=0$). 
Thus
\begin{equation}\label{xP-}
x(P_-)=\frac{\left(\prod_{i\to j}  x_i^{\bar{s}_j}x_j^{s_i}\right) }{ \prod_{k\in K}x_k}
=\frac{ \prod_{i\to j}  x_j^{s_i} }{ \prod_{k\in K}x_k} 
\end{equation}
Meanwhile, ${\rm cross }(T^\circ,\gamma)=\prod_{l=1}^n  x_l^{-a_l}$.
So we can compute the g-vector case by case:

If $r\in Q$: the power of $x_r$ in \eqref{xP-} is equal to the number of arrows $i\to r$ with $s_r=1$, which is $\deg^-_Q(r)$. Since $a_r=1$, $g_r=\deg^-_Q(r)$.

If $r\notin Q$  and $(\deg_Q^+(r),\deg_Q^-(r))=(0,1)$: then $r\notin K$, thus the power of $x_r$ in \eqref{xP-}  is $\deg^-_Q(r)=1$.

If $r\notin Q$  and $(\deg_Q^+(r),\deg_Q^-(r))\neq (0,1)$: then $(\deg_Q^+(r),\deg_Q^-(r))=(1,1), (1,0)$ or $(0,0)$.  If it is $(1,1)$, then $r\in K$, thus the power of $x_r$ in \eqref{xP-}  is $\deg^-_Q(r)-1=0$; if it is $(1,0)$ or $(0,0)$, then  $\deg_Q^-=0$ and $r\notin K$, thus $x_r$ does not appear in $\eqref{xP-}$. In all cases, we get $g_r=0$.
\end{proof}

\begin{remark}
The above is equivalent  to the following description of the g-vector of $x[{\bf a}_Q]$: 
 $$ g_r= \left\{\begin{array}{lr}
        -1, \;\text{if $r\in Q$ is not the head of any arrow in $Q$;}\\
        1, \quad \text{if ``$r\in Q$ is the head of two arrows in $Q$'', or ``$r\notin Q$ is adjacent to only }\\
        \quad\quad \text{one vertex in $Q$, either $1\to r$ or $n\to r$'';}\\
        0, \quad \text{otherwise.}\\
        \end{array}\right.
$$  
\end{remark}

\subsection{Scattering diagrams and broken lines}\label{subsection:Scattering diagrams and broken lines}
We only recall the necessary facts needed in our paper, specialized in our setting. For more reference, see \cite{GHKK}. 

Recall that in \S\ref{subsection:quiver def} we defined $I=I_{\rm uf}=\{1,\dots,n\}$ for a coefficient free cluster algebra $\mathcal{A}$ of rank $n$, and $I=\{1,\dots,2n\}$ and $I_{\rm uf}=\{1,\dots,n\}$ for a principal coefficient cluster algebra $\mathcal{A}_{\rm prin}$ of rank $n$. 

Let $M\cong\mathbb{Z}^{n'}$, $N={\rm Hom}(M,\mathbb{Z})$ with a basis $(e_i)_{i\in I}$, $M_\mathbb{R}:=M\otimes \mathbb{R}$ with dual basis $(f_i)_{i\in I}$. $N$ is equipped with a skew-symmetric form $\{\cdot,\cdot\}$. Let $N_{\rm uf}$ be a sublattice of $N$ with basis $(e_i)_{i\in I_{\rm uf}}$. Define
$$N^+:=\big\{\sum_{i\in I_{\rm uf}} a_ie_i |  a_i\ge 0, \sum a_i>0\big\}.$$

We assume the Fundamental Assumption: the map $p_1^*: N_{\rm uf}\to M$ given by $n\mapsto \{n,\cdot\}$ is injective. 

Choose a strictly convex top-dimensional cone $\sigma\subseteq M_\mathbb{R}$, with associated monoid $P:=\sigma\cap M$, such that $p_1^*(e_i)\in P\setminus\{0\}$ for all $i\in I_{\rm uf}$. Let $\widehat{\mathbb{Z}[P]}$ be the completion of $\mathbb{Z}[P]$ at the maximal monomial ideal $\mathfrak{m}$ generated by $\{x^m|m\in P\setminus\{0\}\}$. 

A \emph{wall} is a pair $(\mathfrak{d}, f_\mathfrak{d})$ where $\sigma\in M_\mathbb{R}$ is a $({\rm rank } N-1)$-dimensional convex rational polyhedral cone contained in $n_0^\perp$ for some primitive vector   $n_0\in N^+$, and  $f_\mathfrak{d}=1+\sum_{k>0} c_kz^{kp_1^*(n_0)}$. 

A \emph{scattering diagram} $\mathfrak{D}$ is a collection of walls such that there are only finite many walls $(\mathfrak{d}, f_\mathfrak{d})\in \mathfrak{D}$ satisfying $f_\mathfrak{d}\not\equiv 1\mod \mathfrak{m}^k$ for each $k>0$. 

Given a scattering diagram $\mathfrak{D}$, define its \emph{support} to be
$${\rm Supp}(\mathfrak{D}):=\bigcup \mathfrak{d},$$
and define its \emph{singular locus} to be
$${\rm Sing}(\mathfrak{D}):=\bigcup \partial\mathfrak{d}\cup \bigcup_{\dim \mathfrak{d}_1\cap\mathfrak{d}_2=n-2} \mathfrak{d}_1\cap \mathfrak{d}_2.$$

For a smooth path $\gamma: [0,1]\to M_{\mathbb{R}}\setminus{\rm Sing}(\mathfrak{D})$ whose endpoints are not in ${\rm Supp}(\mathfrak{D})$ and that it is transversal to each wall that it crosses, we define an automorphism $\theta_{\gamma,\mathfrak{D}}$ of $\widehat{\mathbb{Z}[P]}$ as follows:
for each $k>0$, we can find numbers $0<t_1\le t_2\le\cdots\le t_s<1$ such that $\gamma(t_i)\in\mathfrak{d}_i$, with $(\mathfrak{d}_i, f_{\mathfrak{d}_i})\in \mathfrak{D}$, $f_{\mathfrak{d}_i}\not\equiv 1\mod \mathfrak{m}^k$, and $\mathfrak{d}_i\neq\mathfrak{d}_j$ if $t_i=t_j$, and $s$ taken as large as possible. For each $i$, define an automorphism $\theta_i$ to be
$$\theta_i(x^m):=x^mf_{\mathfrak{d}_i}^{m\cdot n_i}$$
where $n_i$ is the primitive vector annihilating $\mathfrak{d}_i$ and satisfying $\gamma'(t_i)\cdot n_i<0$. Define
$$\theta^{(k)}_{\gamma,\mathfrak{D}}=\theta_s\theta_{s-1}\cdots\theta_1,\quad\textrm{ and }\;
\theta_{\gamma,\mathfrak{D}}=\lim_{k\to\infty}\theta^{(k)}_{\gamma,\mathfrak{D}}.$$

A \emph{consistent scattering diagram} $\mathfrak{D}$ is a scattering diagram such that $\theta_{\gamma,\mathfrak{D}}$ only depends on the endpoints of $\gamma$. 

Two scattering diagrams $\mathfrak{D}$, $\mathfrak{D}'$ are \emph{equivalent} if $\theta_{\gamma,\mathfrak{D}}=\theta_{\gamma,\mathfrak{D}'}$ for all paths $\gamma$ for which both sides are defined. Let $p^*: N\to M$ be given by $n\mapsto \{n,\cdot\}$. A wall $\mathfrak{d}$ is \emph{incoming} if $p^*(n_0)\in \mathfrak{d}$, where 
$n_0\perp \mathfrak{d}$ and $n_0\in N^+$; otherwise, the wall is \emph{outgoing}. 
Define the initial scattering diagram 
\begin{equation}\label{eq:initial walls}
\mathfrak{D}_{\rm in}:= \{(e_i^\perp,1+z^{v_i})| i\in I_{\rm uf}\},\quad \textrm{ where } v_i:=p_1^*(e_i).
\end{equation}
Note that all walls in $\mathfrak{D}_{\rm in}$ are incoming.  

\begin{definition}\label{def of D}
Let $\mathfrak{D}\supset\mathfrak{D}_{\rm in}$ be a consistent scattering diagram such that $\mathfrak{D}\setminus\mathfrak{D}_{\rm in}$ consists only of outgoing walls.  
\end{definition}
(It is proved in \cite[Theorem 1.7]{GHKK} that $\mathfrak{D}$ exists and is unique up to equivalence.)

\smallskip
Let $\mathfrak{D}$ be as defined in Definition \ref{def of D}, $m_0\in M\setminus\{0\}$ and $\QQ\in M_\mathbb{R}\setminus{\rm Supp}(\mathfrak{D})$. A \emph{broken line} for $m_0$ with endpoint $\QQ$ is a piecewise linear continuous proper path $\gamma:(-\infty,0] \to \mathbb{R}^n\setminus{\rm Sing }(\mathfrak{D})$ with a finite number of domains of linearity. This path comes along with the data of, for each domain of linearity $L\subseteq (-\infty,0]$ of $\gamma$, a monomial $c_Lz^{m_L}\in \mathbb{Z}[M]$. This data satisfies the following properties:
\begin{enumerate}[itemsep=5pt]
\item $\gamma(0)=\QQ$.
\item If $L$ is the first (and therefore unbounded) domain of linearity	of $\gamma$, then $c_Lz^{m_L}=z^{m_0}$.
\item For $t$ in a domain of linearity $L$, $\gamma'(t)=-m_L$.
\item Let $t\in (-\infty,0)$ be a point at which $\gamma$ is not linear, passing from domain of linearity $L$ to $L'$.
Let $\mathfrak{D}_t=\{(\mathfrak{d},f_{\mathfrak{d}})\in\mathfrak{D} | \gamma(t)\in\mathfrak{d}\}$. Then $c_{L'}z^{m_{L'}}$ is a term in the formal power series 
$$c_Lz^{m_L}\prod_{(\mathfrak{d},f_{\mathfrak{d}})\in\mathfrak{D}_t} f_\mathfrak{d}^{|\langle n_0,m_L\rangle|}.$$ 
\end{enumerate}
We denote the monomial attached to the final domain of linearity of $\gamma$ by ${\rm Mono }(\gamma)$.

For given $m_0\in M\setminus\{0\}$ and $\QQ\in M_\mathbb{R}\setminus{\rm Supp}(\mathfrak{D})$,  the \emph{theta function} is defined to be
$$\vartheta_{\QQ,m_0}=\sum_\gamma {\rm Mono}(\gamma),$$
where $\gamma$ runs over all broken lines for $m_0$ with endpoint $\QQ$.

\begin{remark} \label{conditions not needed}
The broken lines that we shall construct in this paper will not bend on walls that are not in the initial scattering diagram $\mathfrak{D}_{\rm in}$ defined in \eqref{eq:initial walls}. A priori, there may exist broken lines that do bend on walls in $\mathfrak{D}\setminus\mathfrak{D}_{\rm in}$ (i.e., outgoing walls); but we shall argue that there are no such broken lines in our setting (where we only consider those appearing in the theta function corresponding to cluster variables; see \S\ref{subsection:bijective}). In general, broken lines can also bend on outgoing walls if we consider those appearing in the theta function corresponding to cluster monomials. Because of this obstacle, we could not yet extend the cluster variable formulas Theorem \ref{MainThm2} and \ref{MainThm2'} (which explicitly decribe the broken lines using GCSs) to cluster monomials.
\end{remark}

\subsection{A bijection between GCSs and broken lines (in the even rank case)}

In this subsection, we give a bijection between GCSs and broken lines for the cluster variable $x[{\bf a}_Q]$ when $n'$ (the rank of the cluster algebra $\mathcal{A}$) is even. This is exactly the case when the exchange matrix $B$ is of full rank, which guarantees that the Fundamental Assumption in \cite{GHKK} is satisfied. (Indeed, since \cite[Lemma 3.2]{BFZ} asserts that the rank is invariant under mutation, it is suffices to consider the linear quiver $1\to 2\to\cdots\to n'$, in which case the determinant of $B$ is 1 if $n'$ is even, 0 if $n'$ is odd.) 

\begin{definition}\label{df:adjustableposition}
For a given GCS ${\bf s}$, we say that $r\in [1,n]$ is an {\it adjustable} position if 

(a) the $r$-th coordinate of ${\bf s}$ is $0$, and 

(b) the sequence obtained from ${\bf s}$ by replacing the $r$-th coordinate by $1$ is still a GCS. 

\end{definition}

\begin{remark}\label{rmk:adjustableposition}
An adjustable position always exists as long as some coordinate of ${\bf s}=(s_j)$ is 0. This follows from the following equivalent definition:  let $Q^{\bf s}$ be the full subquiver of $Q$ with vertex set $\{j \in Q_0\,|\, s_j=0\}$. Then $r$ is an adjustable position if and only if $r$ is a sink of $Q^{\bf s}$.
Since $Q$ is acyclic, $Q^{\bf s}$ is also acyclic, thus $Q^{\bf s}$ has at least one sink.
\end{remark}

\begin{definition}\label{df:wisi}
For a given GCS ${\bf s}$, let $\ell=n-|{\bf s}|$. Define ${\bf s}^{(\ell)}={\bf s}$, and define $w_i$ ($1\le i\le \ell$) and ${\bf s}_i$ ($0\le i\le \ell-1$) backward recursively as follows. Assume ${\bf s}^{(i)}$ is defined for some $i$ satisfying $1\le i\le \ell$. Define $w_i\in[1,n]$ to be the smallest adjustable position, and the corresponding new GCS by ${\bf s}^{(i-1)}$. (It is clear that $w_1,\dots,w_\ell$ are mutually distinct, and the set $\{w_1,\dots,w_\ell\}=\{j\in Q_0\, |\, s_j=0\}$.)
\end{definition}

\begin{definition}\label{df:g}
(i) Define a function $g_Q: \{0,1\}^{n}\to \{0,\pm1\}^{n'}$ sending ${\bf s}$ to $(g_r)_{1\le r\le n'}$ where
{\small 
\begin{equation}\label{df:gr}
 g_r= \left\{\begin{array}{lr}
        \deg_{Q,{\bf s}}^{1\to}(r)+\deg_{Q,{\bf s}}^{0\leftarrow}(r)-1, \quad \text{if $r\in Q_0$, or $r$ and two vertices in $Q_0$ form a 3-cycle;}\\
        \deg_{Q,{\bf s}}^{1\to}(r)+\deg_{Q,{\bf s}}^{0\leftarrow}(r), \quad \text{otherwise.}\\
        \end{array}\right.
\end{equation} 
}
Here $\deg^{1\to}_{Q,{\bf s}}(i)$ is the number of 
arrows $j\to i$ in $Q'_1$ such that $j\in Q_0$ and $s_j=1$, and $\deg^{0\leftarrow}_{Q,{\bf s}}(i)$ is the number of 
arrows $j\leftarrow i$ in $Q'_1$ such that $j\in Q_0$ and $s_j=0$. (Remark \ref{rmk:mi-1to1}(2) explains why $g_r\in\{0,\pm1\}$.) 

(ii) Define $m_i=(m_{i,r}):=g_Q({\bf s}^{(i)})\in \{0,\pm1\}^{n'}$ for $0\le i\le \ell$. 

\end{definition}

\begin{remark}\label{rmk:mi-1to1}
(1) Note that  ${\bf s}^{(0)}=[1,\dots,1]\in\{0,1\}^n$. We claim that $m_0=(g_r)$ coincides with the definition given in Lemma \ref{lem:g}. Indeed, note that $\deg_{Q,{\bf s}}^{0\leftarrow}(r)=0$. If $r\in Q_0$, nothing needs to be proved. We assume $r\notin Q_0$ in the rest of the paragraph. If $(\deg_Q^+(r),\deg_Q^-(r))=(0,1)$, then we are in the second case of \eqref{df:gr}, $g_r=\deg_{Q,{\bf s}}^{1\to}(r)=\deg_Q^-(r)=1$; if $(\deg_Q^+(r),\deg_Q^-(r))=(0,0)$ or $(1,0)$, then we are in the second case of \eqref{df:gr}, thus $g_r=\deg_{Q,{\bf s}}^{1\to}(r)=\deg_Q^-(r)=0$;  if  $(\deg_Q^+(r),\deg_Q^-(r))=(1,1)$, then  we are in the first case of \eqref{df:gr}, thus $g_r=\deg_{Q,{\bf s}}^{1\to}(r)-1=\deg_Q^-(r)-1=0$.  

(2) It is easy to check that every coordinate of $m_i=(g_r)$ satisfies $-1\le g_r\le 1$. Indeed, $g_r\ge -1$ is obvious; to check $g_r\le 1$, note that every vertex $r$ is adjacent to at most two vertices in $Q_0$, so we only need to show that in the second case of \eqref{df:gr},  it is impossible to have $\deg_{Q,{\bf s}}^{1\to}(r)+\deg_{Q,{\bf s}}^{0\leftarrow}(r)\ge 2$. But this equality implies that $r$ is adjacent to at least two vertices in $Q_0$; it follows from the description of type A quivers that  $r$ and two vertices in $Q_0$ form a 3-cycle, which contradicts the condition of the second case.
\end{remark}

The following main theorem gives a bijective construction of $\vartheta_{\QQ,m_0}=x[{\bf a}_Q]$.

\begin{theorem}\label{MainThm2}
Assume the rank $n'$  of the cluster algebra is even, and   $\QQ=(q_1,q_2,\dots,q_{n'})$ such that 
\begin{equation}\label{conditionQ}
0< q_i\ll q_1\ll q_2\ll\cdots\ll q_{n}, \textrm{ for each $i=n+1,\dots n'$.}
\end{equation} 
(Here $x\ll y$ means $0<x/y\le \varepsilon$, where $\varepsilon>0$ is a fixed real number satisfying $(1+\varepsilon)^n<2$.)
Let $m_0$ be defined as in Lemma \ref{lem:g}. Then there is a bijection $\varphi$ between the set of GCS and the set of broken lines for $m_0$ with endpoint $\QQ$, such that each GCS ${\bf s}$ is sent to a broken line $\gamma=\varphi({\bf s})$ satisfying 
${\rm Mono}(\gamma)=z_{\bf s}$ (defined in \eqref{cluster variable GCS formula}):
using notation in Definition \ref{df:wisi}, the broken line $\gamma$ can be explicitly described as follows: 

{\rm(i)} it has $\ell+1$ domains of linearity $L_0,L_1,\dots,L_\ell$ (where $L_0$ is unbounded); 

{\rm(ii)} it bends from the domain of linearity $L_{i-1}$ to $L_{i}$ at a point on the wall $\mathfrak{d}_{w_i}$;

{\rm(iii)} $\gamma'(t)=-m_i$ for $t\in L_i$;

{\rm(iv)} the monomial attached to $L_i$ is $z^{m_i}$. 
\end{theorem}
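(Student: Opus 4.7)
The plan is to verify directly that the map $\varphi$ defined by conditions (i)--(iv) produces a valid broken line, and then to establish bijectivity by showing every broken line for $m_0$ with endpoint $\QQ$ arises this way.

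First I would verify the local bending rule at each proposed bend. The wall $\mathfrak{d}_{w_i}=e_{w_i}^\perp$ is one of the initial walls in $\mathfrak{D}_{\rm in}$, with wall function $f_{\mathfrak{d}_{w_i}}=1+z^{v_{w_i}}$. Condition~(4) of the broken-line definition requires $z^{m_i}$ to be a term of $z^{m_{i-1}}(1+z^{v_{w_i}})^{|\langle e_{w_i},m_{i-1}\rangle|}$, which reduces to proving $m_i-m_{i-1}=v_{w_i}$ together with $\langle e_{w_i},m_{i-1}\rangle\neq 0$. I would establish both by a coordinate-by-coordinate inspection: flipping $s_{w_i}$ from $0$ to $1$ in passing from ${\bf s}^{(i-1)}$ to ${\bf s}^{(i)}$ changes $\deg^{1\to}_{Q,{\bf s}}(r)$ and $\deg^{0\leftarrow}_{Q,{\bf s}}(r)$ by exactly the signed count of arrows between $w_i$ and $r$ in $\tilde{Q}$, which is precisely the $r$-th coordinate of $v_{w_i}=p_1^*(e_{w_i})$. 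Because $w_i$ is a sink of $Q^{{\bf s}^{(i)}}$ (Remark~\ref{rmk:adjustableposition}), I expect $\langle e_{w_i},m_{i-1}\rangle$ to equal $\pm 1$, which simultaneously ensures a non-trivial bend and yields binomial factor $\binom{1}{1}=1$, so no extra scalar is introduced.

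Next I would verify the global geometry. The ordering condition~\eqref{conditionQ} is designed so that the piecewise-linear path traced backward from $\QQ$ using slopes $-m_\ell,-m_{\ell-1},\dots,-m_0$ meets the walls $\mathfrak{d}_{w_\ell},\dots,\mathfrak{d}_{w_1}$ in the prescribed order without prematurely crossing any other initial wall. The smallness of the frozen coordinates $q_{n+1},\dots,q_{n'}$ forces all relevant bending points to lie on unfrozen coordinate hyperplanes, while the strict ordering $q_1\ll\cdots\ll q_n$ matches the greedy ``smallest adjustable position'' rule of Definition~\ref{df:wisi}. I would formalize this by induction on $i$. The evenness of $n'$ enters only through the Fundamental Assumption (injectivity of $p_1^*$), which is what makes $\mathfrak{D}_{\rm in}$ a well-defined scattering diagram.

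Injectivity of $\varphi$ is then immediate, since the ordered sequence $w_1,\dots,w_\ell$ of bending walls encodes the set $\{w_i\}=\{j\in Q_0\mid s_j=0\}$, and the GCS ${\bf s}$ is thereby determined. Surjectivity is the main obstacle: I must rule out broken lines that bend on outgoing walls (cf.~Remark~\ref{conditions not needed}). The argument I have in mind is an induction on the number of bends, exploiting that every coordinate of each intermediate exponent $m_L$ must stay within $\{0,\pm 1\}$ for the final monomial to contribute to the positive Laurent expansion of the cluster variable; bending on an outgoing wall would produce an exponent of absolute value $\ge 2$ or an incompatible sign relative to $m_0$. Once every bend is verified to be on an initial wall, the reversed chain of walls yields a strictly increasing chain of GCSs starting at $(1,\dots,1)$, reconstructing some ${\bf s}$.

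Finally, the monomial equality ${\rm Mono}(\gamma)=z_{\bf s}$ reduces to rewriting $z^{m_\ell}=z^{g_Q({\bf s})}$ in the original $x$-variables and comparing with the explicit formula in~\eqref{cluster variable GCS formula}; this is a direct bookkeeping of degrees using the definitions of $\deg^{1\to}_{Q,{\bf s}}$, $\deg^{0\leftarrow}_{Q,{\bf s}}$, and the set $K$. The coefficient is $1$ because each bend contributed a factor of $\binom{1}{1}=1$ as established in the first step.
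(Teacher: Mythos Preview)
Your local bending check (first paragraph) is the paper's Step~1, and your injectivity argument matches the paper's. The global-geometry verification is more delicate than you indicate: the paper proves a quantitative estimate (Lemma~\ref{Qi-Qi+1}) that the $w_i$-th coordinate of each intermediate point $\QQ_{i'}$ satisfies $q^{(i')}_{w_i}\approx_{(1+\varepsilon)^{\ell+1-i'}} q_{w_i}$, and the crucial case $w_{i'}>w_i$ uses the \emph{smallest-adjustable} property of $w_{i'}$ to force $m_{i',w_i}=0$ (otherwise a smaller sink of $Q^{{\bf s}^{(i')}}$ would exist). You gesture at this mechanism but do not supply it; without it the induction you propose does not close.

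The genuine gap is surjectivity. Your proposed invariant, that every intermediate exponent $m_L$ stays in $\{0,\pm1\}^{n'}$, is not justified: knowing that the \emph{final} monomial lies in the support of $x[{\bf a}_Q]$ (which already presupposes \cite[Theorem~7.5(4)]{GHKK}) does not constrain the intermediate exponents, and your claim that bending on an outgoing wall forces a coordinate of absolute value $\ge 2$ is unsubstantiated---outgoing walls in a type~$A$ cluster scattering diagram also carry functions $1+z^v$ with $v$ having entries in $\{0,\pm1\}$. The paper avoids any structural analysis of outgoing walls by a counting argument: since $\vartheta_{\QQ,m_0}=x[{\bf a}_Q]$ by \cite[Theorem~7.5(4)]{GHKK}, evaluating at $x_i=1$ gives $\#\{\textrm{GCS}\}=x[{\bf a}_Q]|_{x_i=1}\ge \#\{\textrm{broken lines}\}$ (each broken line contributes a positive integer to the theta function), and injectivity of $\varphi$ gives the reverse inequality. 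This is both shorter and logically cleaner; as the paper notes in Remark~\ref{conditions not needed}, the fact that no broken line bends on an outgoing wall is a \emph{consequence} of this count, not an input to it.
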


\subsection{Proof of Theorem \ref{MainThm2}}\label{section:proof}
\subsubsection{We show that (i)--(iv) determine a valid broken line.} The setting of \cite{GHKK} for a  type A cluster algebra is specialized as follows: the lattice $N_{\rm uf}=N=\mathbb{Z}^{n'}$ is equipped with a basis $e_1,\dots,e_{n'}$ and with a skew-symmetric bilinear form
$\{\cdot,\cdot\}: N\times N\to \mathbb{Q}$ satisfying
$\{e_i,e_j\}=\epsilon_{ij}=-b_{ij}$. (Since type A is skew-symmetric,  all the multipliers $d_1=\cdots=d_{n'}=1$ as noted in \cite[p114]{GHKK}). The dual lattice $M$ has a basis $f_1,\dots,f_{n'}$ dual to $e_1,\dots,e_{n'}$. The vector $v_j$ defined in \cite[p29]{GHKK} is
$$v_i:=\{e_i,\cdot\}=\sum_{j=1}^{n'} b_{ji}f_j.$$

\noindent\underline{Step 1}: check $z^{m_{i-1}}f_{\mathfrak{d}_{w_i}}^{\langle n_0,m_{i-1}\rangle}$ contains a term $z^{m_i}$, for $1\le i\le \ell$. 
(Here $n_0$ is the primitive vector annihilating the tangent space to $\mathfrak{d}_{w_i}$ and that $
\langle n_0,m_{i-1}\rangle$ is positive.) Since $n_0=\pm e_{w_i}$, and by Remark \ref{rmk:mi-1to1}(2), all coordinates of $m_{i-1}$ take value in $\{-1,0,1\}$, we must have $\langle n_0,m_{i-1}\rangle=1$. Meanwhile, by the initial scattering diagram described \cite[p31]{GHKK}, 
$f_{\mathfrak{d}_{w_i}}=1+z^{v_{w_i}}$. So
$$z^{m_{i-1}}f_{\mathfrak{d}_{w_i}}^{\langle n_0,m_{i-1}\rangle}=
z^{m_{i-1}}+z^{m_{i-1}+v_{w_i}},$$  
and we are left to show that $v_{w_i}=m_i-m_{i-1}$, or equivalently $b_{rw_i}=m_{i,r}-m_{i-1,r}$, or equivalently
\begin{equation}\label{b=X+Y}
 b_{rw_i}=X+Y
\end{equation}
where $X=\deg_{Q,{\bf s}^{(i)}}^{1\to}(r)-\deg_{Q,{\bf s}^{(i-1)}}^{1\to}(r)$, 
$Y=\deg_{Q,{\bf s}^{(i-1)}}^{0\leftarrow}(r)-\deg_{Q,{\bf s}^{(r-1)}}^{0\leftarrow}(r)$. Since 
${\bf s}^{(i)}$ and ${\bf s}^{(i-1)}$ only differ in the $w_i$-th coordinate (where the former has coordinate $0$ and the latter has coordinate $1$), we have  
$$X= \left\{\begin{array}{lr}
        -1, \quad \text{if there is an arrow $w_i\to r$;}\\
        0, \quad \text{otherwise.}\\
        \end{array}\right.
\quad \textrm{ and }
Y= \left\{\begin{array}{lr}
        1, \quad \text{if there is an arrow $r\to w_i$;}\\
        0, \quad \text{otherwise.}\\
        \end{array}\right.
$$
Then \eqref{b=X+Y} can be shown case by case: (1) if $b_{rw_i}=0$, then $r=w_i$ or $r$ is not adjacent to $w_i$, in either situation we have $X=Y=0$, thus \eqref{b=X+Y} holds; (2) if $b_{rw_i}=1$: then $X=0$, $Y=1$, and  \eqref{b=X+Y} holds; (3) if $b_{rw_i}=-1$, then $X=-1$, $Y=0$  and \eqref{b=X+Y} still holds. 

\medskip

\noindent\underline{Step 2}: denoting by $\QQ_i\in \mathfrak{d}_{w_i}$ the point where $\gamma$ bends from the domain of linearity $L_{i-1}$ to $L_{i}$, check that $\QQ_{i}-\QQ_{i+1}\in\mathbb{R}^+\,  m_i$ for $i=1,\dots,\ell$ (assume $\QQ_{\ell+1}=\QQ$). 

Note that $\QQ_i=(q^{(i)}_j)$ are determined by the following conditions:
$$\QQ_{\ell+1}=\QQ,\quad q^{(i)}_{w_i}=0,\quad \QQ_{i}-\QQ_{i+1}\in\mathbb{R}\,  m_i.$$

For convenience, we introduce the following definition: for $x, y, r\in\mathbb{R}$, $y>0$ and $1\le r <2$, define
$$x\approx_r y \Longleftrightarrow 2-r\le \frac{x}{y}\le r\;\; \Big(\Longleftrightarrow \Big|\frac{x}{y}-1\Big|\le r-1\Big).$$
Note that $x\approx_r y$ implies $x>0$. 


\begin{lemma}\label{Qi-Qi+1}
{\rm (i)} The $w_i$-th coordinate of $m_i$ is $-1$.

{\rm (ii)}  $\QQ_{i}-\QQ_{i+1}=q^{(i+1)}_{w_i} m_i$.

{\rm (iii)} $q^{(i')}_{w_i} \approx_{(1+\varepsilon)^{\ell+1-i'}} q_{w_i}$ for $1\le i<i'\le \ell+1$. As a consequence, for all $1\le i\le \ell$, we must have $q^{(i+1)}_{w_i}>0$, hence  $\QQ_{i}-\QQ_{i+1}\in \mathbb{R}^+ m_i$.
\end{lemma}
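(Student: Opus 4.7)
The plan is to handle (i) and (ii) quickly and then devote the main effort to (iii), which I would prove by backward induction on $i'$.

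For \textbf{(i)}, I would use two properties of $w_i$. First, since $w_i$ is adjustable in ${\bf s}^{(i)}$, Remark \ref{rmk:adjustableposition} gives that $w_i$ is a sink of $Q^{{\bf s}^{(i)}}$, so $\deg^{0\leftarrow}_{Q,{\bf s}^{(i)}}(w_i)=0$. Second, ${\bf s}^{(i)}$ is a GCS with $s^{(i)}_{w_i}=0$, so the forbidden-pair rule $(s^{(i)}_j,s^{(i)}_{w_i})\ne(1,0)$ applied along any arrow $j\to w_i$ forces $\deg^{1\to}_{Q,{\bf s}^{(i)}}(w_i)=0$. Since $w_i\in Q_0$, the first branch of \eqref{df:gr} applies and gives $m_{i,w_i}=0+0-1=-1$. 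Part \textbf{(ii)} is then one line: writing $\QQ_i-\QQ_{i+1}=\lambda\,m_i$ for some $\lambda\in\mathbb{R}$ by the definition of the broken line, and reading off the $w_i$-th coordinate, $0-q^{(i+1)}_{w_i}=\lambda\cdot m_{i,w_i}=-\lambda$, so $\lambda=q^{(i+1)}_{w_i}$.

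For \textbf{(iii)}, I would induct backward on $i'$ from the trivial base $i'=\ell+1$. Applying (ii) to the $i'$-th bend and reading the $w_i$-th coordinate produces the master identity
\[
q^{(i')}_{w_i}=q^{(i'+1)}_{w_i}+q^{(i'+1)}_{w_{i'}}\,m_{i',w_i},
\]
so the inductive step reduces to controlling $m_{i',w_i}$. Mimicking (i), the distinctness of the $w_j$'s forces $s^{(i')}_{w_i}=0$, so the GCS condition again kills $\deg^{1\to}_{Q,{\bf s}^{(i')}}(w_i)$, and combining \eqref{df:gr} with Remark \ref{rmk:mi-1to1}(2) yields $m_{i',w_i}\in\{-1,0,+1\}$. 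The crucial dichotomy is that whenever $m_{i',w_i}\ne 0$, one has $w_{i'}<w_i$: if $m_{i',w_i}=-1$ then $w_i$ is itself a sink of $Q^{{\bf s}^{(i')}}$, and the minimality of $w_{i'}$ combined with $w_i\ne w_{i'}$ gives $w_{i'}<w_i$; if $m_{i',w_i}=+1$ then both outgoing arrows of $w_i$ in $Q$ land in $Q^{{\bf s}^{(i')}}$, so the connected component of $Q^{{\bf s}^{(i')}}$ through $w_i$ locally reads $w_i-1\leftarrow w_i\to w_i+1$, and its finite acyclic sub-range to the left of $w_i$ must contain a sink of $Q^{{\bf s}^{(i')}}$, which is strictly less than $w_i$ and thus $\ge w_{i'}$. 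In either nontrivial case \eqref{conditionQ} yields $q_{w_{i'}}/q_{w_i}\le\varepsilon$, and combined with the inductive bounds $q^{(i'+1)}_{w_i}\approx_{(1+\varepsilon)^{\ell-i'}}q_{w_i}$ and $q^{(i'+1)}_{w_{i'}}\le(1+\varepsilon)^{\ell-i'}q_{w_{i'}}$, the triangle-inequality computation
\[
\bigl((1+\varepsilon)^{\ell-i'}-1\bigr)+\varepsilon(1+\varepsilon)^{\ell-i'}=(1+\varepsilon)^{\ell+1-i'}-1
\]
closes the induction. The positivity $q^{(i+1)}_{w_i}>0$ then follows from (iii) with $i'=i+1$ together with $(1+\varepsilon)^n<2$.

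The main obstacle is the $m_{i',w_i}=+1$ subcase of the dichotomy, where one must argue via the linearity of $Q$ and the general fact that any finite acyclic subquiver has a sink, so that the local pattern $w_i-1\leftarrow w_i\to w_i+1$ in $Q^{{\bf s}^{(i')}}$ forces a sink strictly below $w_i$. Once the dichotomy $m_{i',w_i}\ne 0\Rightarrow w_{i'}<w_i$ is in place, the hierarchy $q_1\ll q_2\ll\cdots\ll q_n$ from \eqref{conditionQ} absorbs the error term within the $(1+\varepsilon)^{\ell+1-i'}$ envelope and the rest of the induction is routine $\varepsilon$-accounting.
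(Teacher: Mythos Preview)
Your proof is correct and follows essentially the same approach as the paper's. The only cosmetic difference is in the case split for (iii): the paper splits into Case~1 ($w_{i'}<w_i$, do the $\varepsilon$-estimate) versus Case~2 ($w_{i'}>w_i$, show $m_{i',w_i}=0$), whereas you split by whether $m_{i',w_i}=0$ or not and then prove the contrapositive dichotomy $m_{i',w_i}\ne 0\Rightarrow w_{i'}<w_i$; the underlying arguments (the sink-minimality contradiction and the leftward-path-to-a-sink trick) and the identical $\varepsilon$-computation $((1+\varepsilon)^{\ell-i'}-1)+\varepsilon(1+\varepsilon)^{\ell-i'}=(1+\varepsilon)^{\ell+1-i'}-1$ are the same.
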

\begin{proof}
(i) By \eqref{df:gr},
$$m_{i,w_i}= 
        \deg_{Q,{\bf s}^{(i)}}^{1\to}(w_i)+\deg_{Q,{\bf s}^{(i)}}^{0\leftarrow}(w_i)-1$$
So to show $m_{i,w_i}=-1$, it is equivalent to show that the above degrees are both 0.

Since ${\bf s}^{(i)}_{w_i}=0$, we must have ${\bf s}^{(i)}_{j}=0$ for any arrow $j\to w_i$, otherwise it contradicts the assumption that ${\bf s}^{(i)}$ is a GCS.  Thus $\deg_{Q,{\bf s}^{(i)}}^{1\to}(w_i)=0$. 

Similarly, since  ${\bf s}^{(i-1)}_{w_i}=1$, we must have ${\bf s}^{(i-1)}_{j}=1$ for any arrow $w_i\to j$, otherwise it contradicts the assumption that ${\bf s}^{(i-1)}$ is a GCS. Thus  $\deg_{Q,{\bf s}^{(i-1)}}^{0\leftarrow}(w_i)=0$. Since ${\bf s}^{(i-1)}$ and ${\bf s}^{(i)}$ only differ in the $w_i$-th coordinate, 
we have $\deg_{Q,{\bf s}^{(i)}}^{0\leftarrow}(w_i)=\deg_{Q,{\bf s}^{(i-1)}}^{0\leftarrow}(w_i)=0$. 

(ii) Assume that $\QQ_i-\QQ_{i+1}=\lambda\, m_i$. To determine $\lambda$, it suffices to consider the $w_i$-th coordinate on both sides:
$$q^{(i)}_{w_i}-q^{(i+1)}_{w_i}=\lambda\, m_{i,w_i}=-\lambda$$
Then (ii) holds since $q^{(i)}_{w_i}=0$.

(iii) We prove it by fixing $i$ and using downward induction on $i'$. For $i'=\ell+1$, $q^{(i')}_{w_i}=q_{w_i}$, hence the statement holds. For $i'<\ell+1$, using (ii) we have
\begin{equation}\label{qq}
q^{(i')}_{w_i}-q^{(i'+1)}_{w_i}=q^{(i'+1)}_{w_{i'}}m_{i',w_i}.
\end{equation}
Note that $w_{i'}\neq w_i$ since $i'\neq i$. We argue in two cases:

Case 1: $w_{i'}<w_i$. We have 
$q^{(i'+1)}_{w_{i}}\approx_{(1+\varepsilon)^{\ell-i'}} q_{w_{i}}$
and 
$q^{(i'+1)}_{w_{i'}}\approx_{(1+\varepsilon)^{\ell-i'}} q_{w_{i'}}$
 by inductive assumption. Since $m_{i',w_i}\in\{0,\pm1\}$, \eqref{qq} implies
$$\aligned
 &\Big|\frac{q^{(i')}_{w_i}}{q_{w_i}}-1\Big |
=
\Big| \frac{q^{(i'+1)}_{w_i} + q^{(i'+1)}_{w_{i'}}m_{i',w_i}- q_{w_i} }{q_{w_i}} \Big|
\le 
\Big| \frac{q^{(i'+1)}_{w_i} - q_{w_i} }{q_{w_i}} \Big|+
\Big| \frac{q^{(i'+1)}_{w_{i'}}}{q_{w_i}} \Big|\\
&\quad 
=
\Big| \frac{q^{(i'+1)}_{w_i}}{q_{w_i}}-1 \Big|+
\Big| \frac{q^{(i'+1)}_{w_{i'}}}{q_{w_{i'}}} \Big|\cdot \Big| \frac{q_{w_{i'}}}{q_{w_{i}}} \Big|
\le (1+\varepsilon)^{\ell-i'}-1+ (1+\varepsilon)^{\ell-i'}\varepsilon=(1+\varepsilon)^{\ell+1-i'}-1.
\endaligned$$

Case 2: $w_{i'}>w_i$. We shall show that $m_{i',w_i}=0$ (which implies $q^{(i')}_{w_i}=q^{(i'+1)}_{w_i}\approx_{\ell+1-(i'+1)} q_{w_i}$, therefore $q^{(i')}_{w_i}\approx_{\ell+1-i'} q_{w_i}$,). It suffices to show that the two degrees in the following expression are $0$ and $1$, respectively:
$$m_{i',w_i}= 
        \deg_{Q,{\bf s}^{(i')}}^{1\to}(w_i)+\deg_{Q,{\bf s}^{(i')}}^{0\leftarrow}(w_i)-1$$

Since ${\bf s}^{(i')}$ is  GCS and ${\bf s}^{(i')}_{w_i}=0$ by the construction of ${\bf s}^{(i')}$, there is no arrow $j\to w_i$ in $Q$ satisfying ${\bf s}^{(i')}_{j}=1$. Thus $ \deg_{Q,{\bf s}^{(i')}}^{1\to}(w_i)=0$.

Next, we prove $\deg_{Q,{\bf s}^{(i')}}^{0\leftarrow}(w_i)=1$ by contradiction:

If $\deg_{Q,{\bf s}^{(i')}}^{0\leftarrow}(w_i)=0$, then there is no arrow $w_i\to j$ in $Q$ satisfying ${\bf s}^{(i')}_{j}=0$. But then $w_i$ is also an adjustable position in ${\bf s}^{(i')}$, which contradicts the property of being ``smallest'' in the definition of $w_{i'}$ (Definition \ref{df:wisi}).

If $\deg_{Q,{\bf s}^{(i')}}^{0\leftarrow}(w_i)\ge 2$, then by our assumption on $Q$ (see \eqref{labelQ1ton}), there must be two arrows  $w_i\to (w_i-1)$ and $w_i\to (w_i+1)$ in $Q$ satisfying ${\bf s}^{(i')}_{w_i-1}={\bf s}^{(i')}_{w_i+1}=0$. Consider the longest path starting from $w_i$ and going left:
 $j\leftarrow (j+1)\leftarrow\cdots\leftarrow (w_i-1)\leftarrow w_i$ in $Q^{\;{\bf s}^{(i')}}$ (defined in Remark \ref{rmk:adjustableposition}). Then $j$ is a sink in $Q^{\;{\bf s}^{(i')}}$, hence adjustable and satisfying $j<w_i<w_{i'}$, again contradicts the property of being ``smallest'' in the definition of $w_{i'}$.
\end{proof}

\subsubsection{The map $\varphi$ is bijective}\label{subsection:bijective}

It is easy to see that $\varphi$ injective, since different $GCS$ determine different sets of $w_i$ ( i.e., the sets of walls where the broken line bend), thus determine different broken lines. 
Thus to show the bijectivity of $\varphi$, it suffices to show that the number of GCC is not less than the number of broken lines. To show the latter,  we use \cite[Theorem 7.5 (4)]{GHKK} which asserts that the cluster variable $x[{\bf a}_Q]$ is equal to the theta function $\vartheta_{\QQ,m_0}$. Since the number of GCS is equal to $x[{\bf a}_Q]|_{x_1=\cdots=x_{n'}=1}$, on the other hand each broken line contributes at least 1 to $x[{\bf a}_Q]|_{x_1=\cdots=x_{n'}=1}$, we conclude that the number of GCS is not less than the number of broken lines. Therefore $\varphi$ is bijective. 
This completes the proof of Theorem \ref{MainThm2}.

Note that the above argument implies that each broken line that contributes to $\vartheta_{\QQ,m_0}$ will not bend on any outgoing walls.

\subsection{A bijection between GCSs and broken lines (in the general case)}
To extend Theorem \ref{MainThm2} to odd ranks, we need to consider principal coefficients. We denote by $\tilde{x}[{\bf a}]$ the cluster variable in $\mathcal{A}_{\rm prin}$ that corresponds to the cluster variable  $x[{\bf a}]$ in $\mathcal{A}$.

\begin{lemma}\label{lemma:g-vector principal}
Let $\mathcal{A}$ be a (coefficient-free) cluster algebra of rank $n$,  and $\mathcal{A}_{\rm prin}$ the corresponding cluster algebra with principal coefficients. 
Let $x[{\bf a}]$ be a cluster variable in $\mathcal{A}$ with g-vector ${\bf g}\in\mathbb{Z}^n$. Then the corresponding cluster variable $\tilde{x}[{\bf a}]$ in $\mathcal{A}_{\rm prin}$ has g-vector $$\begin{bmatrix}{\bf g}\\0\end{bmatrix}\in\mathbb{Z}^n\times 0^n\subset \mathbb{Z}^{2n}.$$ 
\end{lemma}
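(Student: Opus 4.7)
The plan is to combine the Fomin--Zelevinsky separation-of-additions formula with the characterization of cluster variables as theta functions from \cite{GHKK}. The rough idea is that both formalisms exhibit $\tilde x[{\bf a}]$ as a distinguished Laurent monomial $z^m$ plus corrections $z^{m+\sum_i k_i v_i}$ with $k_i \geq 0$, so matching the two expansions will force $m=({\bf g},0)$.

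First I would apply the separation formula to write
$$
\tilde x[{\bf a}] \;=\; x^{({\bf g},0)} \cdot F_{\bf a}(\hat y_1, \dots, \hat y_n),
\qquad \hat y_i := x_{n+i}\prod_{j=1}^n x_j^{b_{ji}},
$$
where ${\bf g}$ is the $g$-vector of $x[{\bf a}]$ in $\mathcal A$ and $F_{\bf a}$ is the $F$-polynomial of $\tilde x[{\bf a}]$, which has constant term $1$ and nonnegative integer coefficients. A direct calculation identifies the $\mathbb Z^{2n}$-degree of $\hat y_i$ with $v_i := p_1^*(e_i)$. Expanding $F_{\bf a}$ as a polynomial in the $\hat y_i$ therefore yields
$$
\tilde x[{\bf a}] \;=\; \sum_{{\bf k}\in \mathbb Z_{\ge 0}^n} c_{\bf k}\, z^{({\bf g},0)+\sum_i k_i v_i},
\qquad c_{\mathbf 0}=1.
$$
By the broken-line construction in \cite{GHKK}, every theta function admits a parallel expansion $\vartheta_m = z^m + \sum_{m'} c_{m'}\, z^{m'}$ in which each $m'-m$ is a nonnegative integer combination of $v_1,\dots,v_n$, and $z^m$ itself (contributed by the straight unbent broken line) appears with coefficient $1$. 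Since by \cite[Theorem 7.5]{GHKK} $\tilde x[{\bf a}] = \vartheta_{\tilde{\bf g}}$ for its $g$-vector $\tilde{\bf g}\in \mathbb Z^{2n}$, matching the two expansions will give $\tilde{\bf g} = ({\bf g},0)$.

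The delicate step is to ensure uniqueness of the leading exponent under this matching. Because principal coefficients make the lower $n\times n$ block of $\tilde B$ equal to $I_n$, the last $n$ coordinates of $v_i$ form the $i$-th standard basis vector, and hence the last $n$ coordinates of $({\bf g},0)+\sum_i k_i v_i$ are exactly $(k_1,\dots,k_n)$. The term ${\bf k}=\mathbf 0$ is therefore the unique Laurent monomial in the expansion of $\tilde x[{\bf a}]$ whose last $n$ coordinates vanish, and the analogous property singles out $z^m$ inside $\vartheta_m$. This identifies $({\bf g},0)$ as the $g$-vector of $\tilde x[{\bf a}]$, as claimed.
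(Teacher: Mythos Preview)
Your argument is correct, but it takes a genuinely different route from the paper. The paper proceeds by a direct, elementary computation: it works in the $4n$-variable algebra $(\mathcal{A}_{\rm prin})_{\rm prin}$, tracks the mutated extended exchange matrices $\tilde{\bar B}_t$ inductively, and observes that $\tilde{x}[{\bf a}]$ is obtained from $x[{\bf a}]$ (expressed in the $2n$ variables of $\mathcal{A}_{\rm prin}$) by the substitution $x_i\mapsto\tilde{x}_i$, $x_{n+i}\mapsto\tilde{x}_{n+i}\tilde{x}_{2n+i}$. A short degree computation then shows that each substituted variable has multidegree $(\deg x_\bullet,0)$, whence the multidegree (i.e.\ the $g$-vector) of $\tilde{x}[{\bf a}]$ is $({\bf g},0)$.

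Your approach instead compares two known expansions of $\tilde{x}[{\bf a}]$: the Fomin--Zelevinsky separation formula and the broken-line expansion of the theta function $\vartheta_{\tilde{\bf g}}$ from \cite{GHKK}. The key observation that the last $n$ coordinates of the $v_i$ form the standard basis is exactly what makes the ``leading monomial'' unique, and the matching is clean. The trade-off is that your proof imports the full strength of \cite[Theorem~7.5]{GHKK} (together with the structure of broken-line monomials on arbitrary walls) for a statement that can be checked by a hands-on inductive calculation; the paper's proof is more self-contained and in particular does not presuppose the identification of cluster variables with theta functions, which is precisely what this lemma is being used to set up in Theorem~\ref{MainThm2'}. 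On the other hand, your argument is conceptually transparent and makes clear \emph{why} the extra $n$ coordinates of the $g$-vector vanish, namely because the $F$-polynomial contributes only nonnegative powers of the frozen variables.
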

\begin{proof}
Recall that the g-vector of $x[{\bf a}]\in\mathcal{A}$ is the multidegree of the corresponding cluster variable in principal coefficients. More precisely, start with matrix $\tilde{B}_{t_0}=\begin{bmatrix}
B\\ I
\end{bmatrix}
$, and for each mutation $t\stackrel{k}{\textrm{------}}t'$, $\tilde{B}_t$ and $\tilde{B}_{t'}$ are related by the rule
\begin{equation}\label{mutate b}
b^{t'}_{ij}=\begin{cases}
-b^t_{ij},  &\textrm{ if $i=k$ or $j=k$;}\\
b^t_{ij}+{\rm sgn }(b^t_{ik})[b^t_{ik}b^t_{kj}]_+, &\textrm{ otherwise.}
\end{cases}
\end{equation} 
the new cluster variable is determined by
\begin{equation}\label{mutation x}
x_{k;t'}x_{k;t}=\prod_{i=1}^{2n}x_{i;t}^{[b_{ik}^t]_+}+\prod_{i=1}^{2n}x_{i;t}^{[-b_{ik}^t]_+}.
\end{equation}
Each cluster variable is homogeneous with the assignment that, for $1\le i\le n$, $\deg(x_i)={\bf e}_i\in\mathbb{Z}^n$, $\deg(x_{n+i})=-{\bf b}_i\in\mathbb{Z}^n$, where ${\bf b}_i$ is the $i$-th column of $B$. This multidegree is the g-vector of the cluster variable. 

The g-vector of $\tilde{x}[{\bf a}]\in\mathcal{A}_{\rm prin}$ is defined similarly as above, with $B$ and $\tilde{B}_{t_0}$ being replaced by 
$$\bar{B}=\begin{bmatrix}
B&-I\\I&0
\end{bmatrix}\;
\textrm{ and } 
 \tilde{\bar{B}}_{t_0}=\begin{bmatrix}
B&-I\\ I&0\\ I&0\\ 0&I
\end{bmatrix}, \textrm{ respectively}. 
$$
Using \eqref{mutate b}, it can be proved by a simple induction that,
$$
\tilde{\bar{B}}_{t}=\begin{bmatrix}
B_t&-C_t\\ C_t&D_t\\ C_t&D_t\\ 0&I
\end{bmatrix}, \textrm{ if }
\tilde{B}_{t}=\begin{bmatrix}
B_t\\ C_t
\end{bmatrix}. 
$$ 
(In fact, $D_t=0$ if the sign-coherence conjecture is true \cite[Conjecture 8.8]{Reading}; in particular, it is true if $B$ is skew-symmetric, but we do not need this fact in this paper.)
Using \eqref{mutation x}, it can be proved by a simple induction that 
$\tilde{x}[{\bf a}]$ can be obtained from $x[{\bf a}]$ by the substitution $x_i\mapsto\tilde{x}_i$ and $x_{n+i}\mapsto\tilde{x}_{n+i}\tilde{x}_{2n+i}$, for $1\le i\le n$. Since 
$$
\aligned
&\deg \tilde{x}_i=\begin{bmatrix}{\bf e}_i\\0\end{bmatrix}=\begin{bmatrix}\deg x_i\\0\end{bmatrix}, \\
&\deg
(\tilde{x}_{n+i}\tilde{x}_{2n+i})=\begin{bmatrix}0\\ {\bf e}_i\end{bmatrix}-(\textrm{the $i$-th column of $\bar{B}$})=\begin{bmatrix}0\\ {\bf e}_i\end{bmatrix}-\begin{bmatrix}{\bf b}_i\\{\bf e}_i\end{bmatrix}
=\begin{bmatrix}-{\bf b}_i\\0\end{bmatrix}
=\begin{bmatrix}\deg x_{n+i}\\0\end{bmatrix},
\endaligned$$ 
the multidegree (i.e., the g-vector) of  $\tilde{x}[{\bf a}]$ must be $\begin{bmatrix}{\bf g}\\0\end{bmatrix}$ where $\bf g$ is the multidegree (i.e., the g-vector) of  $x[{\bf a}]$. 
\end{proof}

In the rest we show that Theorem \ref{MainThm2} can be adapted to $\mathcal{A}_{\rm prin}$.  

\begin{theorem}\label{MainThm2'}
Assume $\tilde{\QQ}=(q_1,q_2,\dots,q_{2n'})$ such that 
\begin{equation*}
0< q_i\ll q_1\ll q_2\ll\cdots\ll q_{n}, \textrm{ for each $i=n+1,\dots n'$.}
\end{equation*} 
(Here $x\ll y$ means $0<x/y\le \varepsilon$, where $\varepsilon>0$ is a fixed real number satisfying $(1+\varepsilon)^n<2$.
There is no condition on $q_{n'+1},\dots,q_{2n'}$.)
Let $$\tilde{m}_0=\begin{bmatrix}m_0\\0\end{bmatrix},$$ where $m_0$ is defined in Lemma \ref{lem:g}.
Then there is a bijection $\varphi$ between the set of GCS and the set of broken lines for $\tilde{m}_0$ with endpoints $\tilde{\QQ}$, such that each GCS ${\bf s}$ is sent to a broken line $\gamma=\varphi({\bf s})$ satisfying 
${\rm Mono}(\gamma)\big|_{x_{n'+1}=\cdots=x_{2n'}=1}=z_{\bf s}$ (defined in \eqref{cluster variable GCS formula}):
using notation in Definition \ref{df:wisi}, the broken line $\gamma$ can be explicitly described as follows: 

{\rm(i)} it has $\ell+1$ domains of linearity $L_0,L_1,\dots,L_\ell$ ($L_0$ is unbounded); 

{\rm(ii)} it bends from the domain of linearity $L_{i-1}$ to $L_{i}$ at a point on the wall $\mathfrak{d}_{w_i}$;

{\rm(iii)} $\gamma'(t)=-\tilde{m}_i$ for $t\in L_i$, where $\tilde{m}_i=\begin{bmatrix} m_i\\ \sum_{r=1}^i {\bf e}_{w_r}\end{bmatrix}$. 

{\rm(iv)} the monomial attached to $L_i$ is $z^{\tilde{m}_i}$. 
\end{theorem}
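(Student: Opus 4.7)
The plan is to mirror the argument of Theorem \ref{MainThm2} after passing to $\mathcal{A}_{\mathrm{prin}}$, using Lemma \ref{lemma:g-vector principal} to identify the initial g-vector and using the principal-coefficient scattering diagram so that the Fundamental Assumption holds automatically (the map $p_1^*$ is injective because the exchange matrix in principal coefficients contains an $n'\times n'$ identity block, so there is no parity obstruction on $n'$).

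First I would verify that the formulas in (i)--(iv) define a valid broken line. For each $i$, the wall $\mathfrak{d}_{w_i}=e_{w_i}^\perp$ carries the function $f_{\mathfrak{d}_{w_i}}=1+z^{v_{w_i}}$, where in $\mathcal{A}_{\mathrm{prin}}$ we have $v_{w_i}=\sum_{j=1}^{n'}b_{j,w_i}f_j+f_{n'+w_i}$. Since Remark \ref{rmk:mi-1to1}(2) still gives $\langle e_{w_i},\tilde{m}_{i-1}\rangle=\langle e_{w_i},m_{i-1}\rangle\in\{-1,0,1\}$, with value $1$ at the bending (as in Lemma \ref{Qi-Qi+1}(i), applied to $-m_{i-1,w_i}=1$), the bending rule amounts to checking
\[
\tilde{m}_i-\tilde{m}_{i-1}=v_{w_i}.
\]
On the first $n'$ coordinates this is exactly equation \eqref{b=X+Y} already established in the proof of Theorem \ref{MainThm2}; on the last $n'$ coordinates it is immediate from the definition $\tilde{m}_i=\bigl[\,m_i\,;\,\sum_{r=1}^i {\bf e}_{w_r}\bigr]$ since the only change from $\tilde{m}_{i-1}$ is adding ${\bf e}_{w_i}$.

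Next I would verify positivity of the bend parameters, i.e.\ $\QQ_i-\QQ_{i+1}\in\mathbb{R}^+\tilde{m}_i$. The bending condition $q^{(i)}_{w_i}=0$ only constrains coordinates in $[1,n']$, so the last $n'$ coordinates are completely determined by the direction vectors $\tilde{m}_i$ and the endpoint $\tilde{\QQ}$, and impose no additional constraint. Consequently the computation of $\lambda_i:=t_{i+1}-t_i$ reduces to the $w_i$-th coordinate of the first $n'$ block, which is precisely the setting of Lemma \ref{Qi-Qi+1}. The same downward induction with the $\approx_{(1+\varepsilon)^{\ell+1-i'}}$ estimate, using only the assumption \eqref{conditionQ} on $q_1,\dots,q_{n'}$, gives $\lambda_i=q^{(i+1)}_{w_i}>0$. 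So (i)--(iv) genuinely describe a broken line for $\tilde{m}_0$ with endpoint $\tilde{\QQ}$.

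Finally I would establish bijectivity and the monomial identity. Injectivity of $\varphi$ is clear because distinct GCSs produce distinct bending sequences $(w_1,\dots,w_\ell)$. For surjectivity, \cite[Theorem 7.5(4)]{GHKK} gives $\vartheta_{\tilde{\QQ},\tilde{m}_0}=\tilde{x}[{\bf a}_Q]$, and under the specialization $x_{n'+1}=\cdots=x_{2n'}=1$ the monomial ${\rm Mono}(\gamma)=z^{\tilde{m}_{L_\ell}}$ of any broken line specializes to a Laurent monomial in $x_1,\dots,x_{n'}$, summing to $x[{\bf a}_Q]$; by \eqref{cluster variable GCS formula} this sum equals $\sum_{\bf s} z_{\bf s}$. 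Each broken line contributes a positive integer upon specializing at $x_1=\cdots=x_{n'}=1$ as well, so counting forces the number of broken lines to be at most the number of GCSs. Combined with injectivity of $\varphi$ and the matching ${\rm Mono}(\gamma)|_{x_{n'+1}=\cdots=x_{2n'}=1}=z_{\bf s}$ (which follows by comparing $\tilde{m}_\ell$ with the exponent of $z_{\bf s}$ recovered from \eqref{cluster variable GCS formula} and the definition of $g_Q$ applied to ${\bf s}^{(\ell)}={\bf s}$), this yields the desired bijection.

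The most delicate step is the monomial identity in the last paragraph: one must verify that $g_Q({\bf s})$, as defined in Definition \ref{df:g}, recovers the correct exponent of $z_{\bf s}$ after specialization, for \emph{every} GCS ${\bf s}$ and not just for ${\bf s}^{(0)}=[1,\dots,1]$. This should follow from a direct bookkeeping of the contributions in \eqref{cluster variable GCS formula}, analogous to (indeed generalizing) the argument of Lemma \ref{lem:g}, but it is where I would spend the bulk of the effort, making sure the correction coming from the set $K$ matches the $(-1)$ shift in the first case of \eqref{df:gr}.
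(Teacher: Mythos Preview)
Your proposal is correct and follows essentially the same route as the paper: reduce to the proof of Theorem~\ref{MainThm2} after passing to principal coefficients, invoke Lemma~\ref{lemma:g-vector principal} for the g-vector, check $\tilde m_i-\tilde m_{i-1}=\tilde v_{w_i}$ coordinatewise, and note that the positivity estimate of Lemma~\ref{Qi-Qi+1} only involves the first $n'$ coordinates. Your closing worry is easily dispatched: unpacking \eqref{cluster variable GCS formula}, the exponent of $x_r$ in $z_{\bf s}$ is $\deg_{Q,{\bf s}}^{1\to}(r)+\deg_{Q,{\bf s}}^{0\leftarrow}(r)-[r\in Q_0\cup K]$, and since $K$ is exactly the set of $r\notin Q_0$ forming a 3-cycle with two vertices of $Q_0$, this is literally the definition of $g_Q({\bf s})_r$ in \eqref{df:gr}; so $z_{\bf s}=z^{g_Q({\bf s})}=z^{m_\ell}$ for every GCS, not just ${\bf s}^{(0)}$.
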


\begin{proof}
Since $\bar{B}$ is full rank, the Fundamental Assumption in \cite{GHKK} is satisfied, thus the proof in \S\ref{section:proof}  works with little change; we point out the nontrivial revision change.

Because of Lemma \ref{lemma:g-vector principal}, the $g$-vector of $\tilde{x}[{\bf a}]$ is indeed $\tilde{m}_0$. Thus $\tilde{x}[{\bf a}]$ is equal to the theta function $\vartheta_{\tilde{\QQ},\tilde{m}_0}$ by  \cite[Theorem 7.5 (4)]{GHKK}. 
 
In Step 1, $n_0$ is replaced by $\tilde{n}_0=\begin{bmatrix}n\\0\end{bmatrix}\in\mathbb{R}^{2n'}$. Since the first $n'$ coordinates of $\tilde{m}_{i-1}$ form the vector $m_{i-1}$, we have 
$\langle \tilde{n}_0,\tilde{m}_i\rangle=\langle n_0,m_i\rangle=1$. The function attached to the wall $\mathfrak{d}_i$ is 
$\tilde{f}_{\mathfrak{d}_i}=1+z^{\tilde{v}_i}$ where ${\tilde{v}_i}=\begin{bmatrix}v_i\\ {\rm e}_i\end{bmatrix}$. Thus
$$\tilde{v}_{w_i}=\begin{bmatrix}v_{w_i}\\{\rm e}_{w_i} \end{bmatrix}=\begin{bmatrix}m_i-m_{i-1}\\{\rm e}_{w_i} \end{bmatrix}=\tilde{m}_i-\tilde{m}_{i-1}.$$ 

In Step 2, since $\pi(\tilde{\QQ}_i)=\QQ_i$, we have $\tilde{\QQ}_i-\tilde{\QQ}_{i+1}=q^{(i+1)}_{w_i} \tilde{m}_i$ with the same $q^{(i+1)}_{w_i}>0$ as in Lemma  \ref{Qi-Qi+1}. 
\end{proof}

\section{Examples}
In this section, we give several examples to illustrate the computation of cluster variables and cluster monomials using methods introduced in previous sections.
\begin{example}
We compute $x[2,2,2]$, the cluster monomial with \textbf{d}-vector $(2,2,2)$ for the quiver $Q=1\to 2\to 3\to 1$ (a 3-cycle).

-- Using formula  \eqref{formula1}: choose $i_0=1$. Then $d(1)=0$, $d(2)=1$, $d(3)=2$, $\sigma_{123}=\sigma_{231}=\sigma_{312}=1$. A GCS ${\bf s}=({\bf s}_1,{\bf s}_2,{\bf s}_3)$ satisfies
$$(s_{1,1},s_{2,1}), (s_{3,2},s_{1,2}), \textrm{ and } (s_{2,2},s_{3,1})\neq(1,0).$$
A tedious computation using \eqref{formula1} gives $x[2,2,2]=x_1^{-2}x_2^{-2}x_3^{-2}(x_1+x_2+x_3)^3$.  (But at  least it is easy to see that there are 27 terms; indeed, since each pair has 3 choices $(0,0),(1,1),(0,1)$, the total number of GCSs is $3\times 3\times 3=27$.)

-- Using formula  \eqref{formula2}: there are three Dyck paths of size $2\times 2$ corresponding to the three arrows $1\to 2$, $2\to 3$, $3\to 1$:
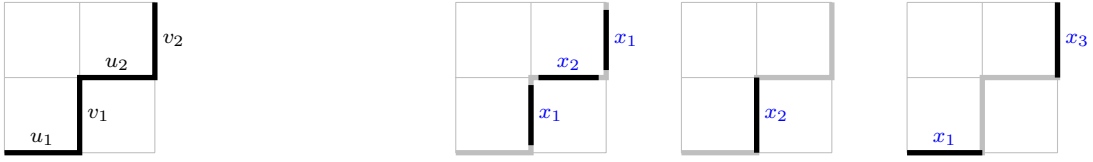
\begin{figure}[h!]
\begin{tikzpicture}[MyPic]
\tikzstyle{every node}=[font=\tiny]
\tsomepoints
\draw (0,0) grid (2,2);
\draw [SoThick]
(0,0) -- (1,0)  node[midway,above] {$u_1$} --
(1,0) -- (1,1)  node[midway,right] {$v_1$} --
(1,1) -- (2,1)  node[midway,above] {$u_2$} --
(2,1) -- (2,2)  node[midway,right] {$v_2$};
	
\begin{scope}[shift={(6,0)}]
\draw (0,0) grid (2,2);
\draw[ line width=2pt]
(0,0) -- (1,0)  node[midway,above] {} --
(1,0) -- (1,1)  node[midway,right] {\color{blue} $x_1$} --
(1,1) -- (2,1)  node[midway,above] {\color{blue} $x_2$} --
(2,1) -- (2,2)  node[midway,right] {\color{blue} $x_1$};
\draw [SoThick]
(1,0.1) -- (1,.9)
(1.1,1) -- (1.9,1)
(2,1.1) -- (2,1.9);
\end{scope}
	
\begin{scope}[shift={(9,0)}]
\draw (0,0) grid (2,2);
\draw[ line width=2pt]
(0,0) -- (1,0)  node[midway] {} --
(1,0) -- (1,1)  node[midway,right] {\color{blue} $x_2$} --
(1,1) -- (2,1)  node[midway] {} --
(2,1) -- (2,2)  node[midway] {};
\draw [SoThick] (1,0) -- (1,1);
\end{scope}
	
\begin{scope}[shift={(12,0)}]
\draw (0,0) grid (2,2);
\draw[ line width=2pt]
(0,0) -- (1,0)  node[midway,above] {\color{blue} $x_1$} --
(1,0) -- (1,1)  node[midway] {} --
(1,1) -- (2,1)  node[midway] {} --
(2,1) -- (2,2)  node[midway,right] {\color{blue} $x_3$};
\draw [SoThick]
(0,0) -- (1,0)
(2,1) -- (2,2);
\end{scope}
\end{tikzpicture}
\caption{Left: the $2\times2$ maximal Dyck path\quad  \quad Right: An example of GCC.}
\label{fig 9}
\end{figure}
such that (i) we do not choose both $u_1$ and $v_1$ in each Dyck path, and (ii) we choose $v_r$ in the $i$-th Dyck path if and only if we do not choose $u_{3-r}$ in the $(i+1)$-th Dyck path for $r=1,2$ (by convention, the 4th Dyck path is the 1st one). In the example of GCC in Figure \ref{fig 9} (Right), the corresponding product
$$\left(\prod_{i\to j}  x_{i}^{\big|S^{(i,j)}_2\big|} x_{j}^{\big|S^{(i,j)}_1\big|}\right)\cdot \prod_{i\to j\to k\to i} x_i^{-\sigma_{jki}}=(x_1^3x_2^2x_3)\cdot x_1^{-1}x_2^{-1}x_3^{-1}=x_1^2x_2.$$
Computing all possible GCCs gives $x[2,2,2]=x_1^{-2}x_2^{-2}x_3^{-2}(x_1+x_2+x_3)^3$.

-- Using formula  \eqref{formula3}: first observe that there are 3 pipelines as shown in Figure \ref{pipelines}.
\begin{figure}[h!]
  \centering
    \includegraphics[width=.2\textwidth]{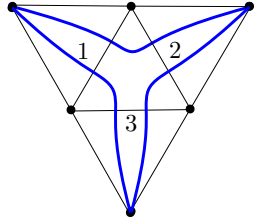}
    \caption{Pipelines}
    \label{pipelines}
\end{figure}

According to \S\ref{subsection3.3}, $x[2,2,2]=x[1,1,0]x[0,1,1]x[1,0,1]$. Now we compute $x[1,1,0]$ using Theorem \ref{MainThm}. The pair $(|S_{1,1}|,|S_{1,2}|)$ can be $(1,0), (0,1), (0,0)$. Correspondingly, we have $(y_1,y_0,y_2)=(x_1,x_{1,0},x_{2,0}), (x_2,x_{1,1},x_{2,1}), (x_{1,2},x_{1,1},x_{2,0})$.
Then
$$\aligned
x[1,1,0]&=(x_1^{-1}x_2^{-1})\sum y_0y_1y_2=(x_1^{-1}x_2^{-1})(x_1x_{1,0}x_{2,0} + x_2x_{1,1}x_{2,1} + x_{1,2}x_{1,1}x_{2,0})\\
&=(x_1^{-1}x_2^{-1})(x_1+ x_2 + x_3)
\endaligned$$
where the last equality is obtained by substituting $x_{1,2}=x_3$, and $x_{1,0}=x_{1,1}=x_{2,0}=x_{2,1}=1$. Similarly, $x[0,1,1]=(x_2^{-1}x_3^{-1})(x_1+ x_2 + x_3)$, $x[1,0,1]=(x_1^{-1}x_3^{-1})(x_1+ x_2 + x_3)$. Thus $x[2,2,2]=x_1^{-2}x_2^{-2}x_3^{-2}(x_1+x_2+x_3)^3$.
\end{example}

\begin{example}\label{example62} We compute some cluster variables of  $\A(\Qcal)$ where $\Qcal$ is the following type $A$ quiver:
\begin{center}
\begin{tikzpicture} [scale=0.8]
\node (v2) at (0:1) {2};
\node (v1) at (120:1) {1};
\node (v5) at (240:1) {5};
\path (30:1.732)++(1,0) node (v6) {6};
\path (-30:1.732)++(1,0) node (v3) {3};
\path (0:1.732)++(2.5,-0.866) node (v4) {4};
\path (30:1.732)++(2.732,0) node (v7) {7};
		
\draw [->] (v1) to (v2);
\draw [->] (v2) to (v5);
\draw [->] (v5) to (v1);
\draw [->] (v2) to (v6);
\draw [->] (v6) to (v3);
\draw [->] (v3) to (v2);
\draw [->] (v3) to (v4);
\draw [->] (v6) to (v7);
\end{tikzpicture}
\end{center}
	
As observed in Remark \ref{rmk 4.7}, the set of non-initial cluster variables is in one-to-one correspondence with the set of \textbf{d}-vectors ${\bf a}=(a_1,\dots,a_n)$, where $a_i\in\{0,1\}$ and $\{i\, |\, a_i=1\}$ is the vertex set of a linear full subquiver of $\Qcal$. 
So we can compute all cluster variables using Theorem \ref{MainThm}. 

For example, we consider the \textbf{d}-vector $\bfa=(1,1,1,0,0,0,0)$. Then the subset of vertices $\{i|a_i=1\}$ is equal to the set of vertices $Q_0$ of the full linear subquiver $Q=1\to2\leftarrow3$. It is a subquiver of an extended linear subquiver $P$ and after completing $P$, we get a completely extended linear quiver $Q'$ as shown in Figure \ref{QQ'}.
	
\begin{figure}[h!]
\begin{tabular}{lll}
\begin{tikzpicture} [scale=0.8]
\node (v1) at (0:1) [red] {1};
\path (-60:1.732)++(1,0) node (v5) {5};
\path (0:1.732)++(1,0) node (v2) [red] {2};
\path (0:1.732)++(2.732,0) node (v3) [red] {3};
\path (60:1.732)++(2.732,0) node (v6) {6};
\path (-30:1.732)++(4.464,0) node (v4) {4};
			
\draw [->,red] (v1) to (v2);
\draw [->] (v2) to (v5);
\draw [->] (v5) to (v1);
\draw [->,red] (v3) to (v2);
\draw [->] (v2) to (v6);
\draw [->] (v6) to (v3);
\draw [->] (v3) to (v4);
\end{tikzpicture}
&
\begin{tikzpicture}[scale=0.8]
\node (v1) at (0:0) {};
\node (v2) at (0:2) {};
\node (v3) at (-60:1.732) {};
		
\draw [->] (v1) to (v2);
\end{tikzpicture}
&
\begin{tikzpicture} [scale=0.8]
\node (v1) at (0:1) [red] {1};
\node (v8) at (120:1) [blue] {8};
\node (v9) at (240:1) [blue] {9};
\path (-60:1.732)++(1,0) node (v5) {5};
\path (0:1.732)++(1,0) node (v2) [red] {2};
\path (0:1.732)++(2.732,0) node (v3) [red] {3};
\path (60:1.732)++(2.732,0) node (v6) {6};
\path (-30:1.732)++(4.464,0) node (v4) {4};
\path (30:1.732)++(4.464,0) node (v10) [blue] {10};
		
\draw [->,blue] (v1) to (v8);
\draw [->,blue] (v8) to (v9);
\draw [->,blue] (v9) to (v1);
\draw [->,red] (v1) to (v2);
\draw [->] (v2) to (v5);
\draw [->] (v5) to (v1);
\draw [->,red] (v3) to (v2);
\draw [->] (v2) to (v6);
\draw [->] (v6) to (v3);
\draw [->] (v3) to (v4);
\draw [blue,->] (v4) to (v10);
\draw [blue,->] (v10) to (v3);
\end{tikzpicture}
\end{tabular}
\caption{$P$ and its completed version $Q'$}
\label{QQ'}
\end{figure}
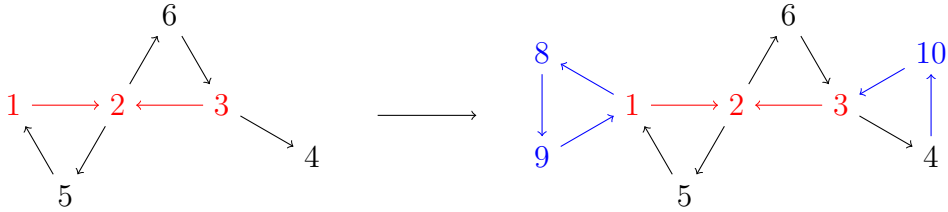
	
\noindent In $P$, we have $v_{1,2}=5$, $v_{2,3}=6$, $v_{3,0}=4$. Two 3-cycles are added and the new vertices are $v_{1,0}=8$, $v_{1,1}=9$ and $v_{3,1}=10$. All GCCs are described as follows.
	
\begin{center}
\begin{tabular}{lllll}
$\beta_1$
&
=
&
\begin{tabular}{l}
\begin{tikzpicture}[MyPic]
\tsomepoints
			
\draw (P00) -- (P10) node[midway,below] {$x_2$};
\draw (P10) -- (P11) node[midway,left] {$x_1$};
			
\draw (P20) -- (P21) node[midway,right] {$x_3$};
\draw (P20) -- (P30) node[midway,below] {$x_2$};
\end{tikzpicture}
\end{tabular}
&
,
&
$x(\beta_1)=(x_5x_6x_9x_{10})/(x_1x_2x_3)$
\end{tabular}\\
	
\begin{tabular}{lllll}
$\beta_2$
&
=
&
\begin{tabular}{l}
\begin{tikzpicture}[MyPic]
\tsomepoints
				
\draw (P00) -- (P10) node[midway,below] {$x_2$};
\draw (P10) -- (P11) node[midway,left] {$x_1$};
			
\draw (P20) -- (P21) node[midway,right] {$x_3$};
\draw [SoThick] (P20) -- (P30) node[midway,below] {$x_2$};
\end{tikzpicture}
\end{tabular}
&
,
&
$x(\beta_2)=(x_2x_4x_5x_9)/(x_1x_2x_3)$
\end{tabular}\\
	
\begin{tabular}{lllll}
$\beta_3$
&
=
&
\begin{tabular}{l}
\begin{tikzpicture}[MyPic]
\tsomepoints
			
\draw [SoThick] (P00) -- (P10) node[midway,below] {$x_2$};
\draw (P10) -- (P11) node[midway,left] {$x_1$};
			
\draw (P20) -- (P21) node[midway,right] {$x_3$};
\draw (P20) -- (P30) node[midway,below] {$x_2$};
\end{tikzpicture}
\end{tabular}
&
,
&
$x(\beta_3)=(x_2x_6x_8x_{10})/(x_1x_2x_3)$
\end{tabular}\\
	
\begin{tabular}{lllll}
$\beta_4$
&
=
&
\begin{tabular}{l}
\begin{tikzpicture}[MyPic]
\tsomepoints
			
\draw [SoThick] (P00) -- (P10) node[midway,below] {$x_2$};
\draw (P10) -- (P11) node[midway,left] {$x_1$};
			
\draw (P20) -- (P21) node[midway,right] {$x_3$};
\draw [SoThick] (P20) -- (P30) node[midway,below] {$x_2$};
\end{tikzpicture}
\end{tabular}
&
,
&
$x(\beta_4)=(x_2^2x_4x_8)/(x_1x_2x_3)$
\end{tabular}\\
	
\begin{tabular}{lllll}
$\beta_5$
&
=
&
\begin{tabular}{l}
\begin{tikzpicture}[MyPic]
\tsomepoints
			
\draw (P00) -- (P10) node[midway,below] {$x_2$};
\draw [SoThick] (P10) -- (P11) node[midway,left] {$x_1$};
			
\draw [SoThick] (P20) -- (P21) node[midway,right] {$x_3$};
\draw (P20) -- (P30) node[midway,below] {$x_2$};
\end{tikzpicture}
\end{tabular}
&
,
&
$x(\beta_5)=(x_1x_3x_9x_{10})/(x_1x_2x_3)$
\end{tabular}
\end{center}
	
\noindent The cluster variable with $\bfd$-vector $(1,1,1,0,0,0,0)$ is
$$x[1,1,1,0,0,0,0] = \sum_{i=1}^5 x(\beta_i) = \dfrac{x_5x_6x_9x_{10}+x_2x_4x_5x_9+x_2x_6x_8x_{10}+x_2^2x_4x_8+x_1x_3x_9x_{10}}{x_1x_2x_3}.$$
Setting $x_8=x_9=x_{10}=1$, we get the following cluster variable of $\A(\Qcal)$:
$$x[1,1,1,0,0,0,0]=\dfrac{x_5x_6+x_2x_4x_5+x_2x_6+x_2^2x_4+x_1x_3}{x_1x_2x_3}.$$
	
The table below shows some $\bfd$-vectors and their corresponding cluster variables of $\A(\Qcal)$.
	
\begin{center}
\begin{tabular}{clccl}
(1,0,0,0,0,0,0) & \qquad $\dfrac{x_2+x_5}{x_1}$ & \hspace{0.7cm} & (1,1,0,0,0,0,0) & \qquad $\dfrac{x_1x_3+x_2x_6+x_5x_6}{x_1x_2}$ \\[18pt]
			
(0,1,0,0,0,0,0) & \qquad $\dfrac{x_1x_3+x_5x_6}{x_2}$ & & (0,1,1,0,0,0,0) & \qquad $\dfrac{x_1x_3+x_2x_4x_5+x_5x_6}{x_2x_3}$ \\[18pt]
			
(0,0,1,0,0,0,0) & \qquad $\dfrac{x_2x_4+x_6}{x_3}$ & & (0,0,1,1,0,0,0) & \qquad $\dfrac{x_2x_4+x_3x_6+x_6}{x_3x_4}$ \\[18pt]
			
(0,0,0,1,0,0,0) & \qquad $\dfrac{1+x_3}{x_4}$ & & (1,0,0,0,1,0,0) & \qquad $\dfrac{x_1+x_2+x_5}{x_1x_5}$ \\[18pt]
			
& & & (0,0,1,0,0,1,0) & \qquad $\dfrac{x_2x_4+x_6+x_3x_4x_7}{x_3x_6}$ \\[18pt]
\hline\\
\end{tabular}
		
\begin{tabular}{cl}
(1,1,1,0,0,0,0) & \qquad $\dfrac{x_5x_6+x_2x_4x_5+x_2x_6+x_2^2x_4+x_1x_3}{x_1x_2x_3}$ \\[18pt]
		
(0,1,1,0,1,0,0) & \qquad $\dfrac{x_1x_3+x_2x_3+x_2x_4x_5+x_5x_6}{x_2x_3x_5}$ \\[18pt]
			
(0,1,1,1,0,0,0) & \qquad $\dfrac{x_1x_3+x_1x_3^2+x_2x_4x_5+x_5x_6+x_3x_5x_6}{x_2x_3x_4}$ \\[18pt]
			
(0,0,1,1,0,1,0) & \qquad $\dfrac{x_2x_4+x_6+x_3x_6+x_3x_4x_7}{x_3x_4x_6}$ \\[18pt]
			
(1,1,1,1,0,0,0) & \qquad $\dfrac{x_1x_3+x_1x_3^2+x_2^2x_4+x_2x_4x_5+x_2x_6+x_2x_3x_6+x_5	x_6+x_3x_5x_6}{x_1x_2x_3x_4}$
\end{tabular}
\end{center}
\end{example}

\medskip

\begin{example} Here we give an example to illustrate statements in \S\ref{S7}. 
Let $Q'$ be the following type A quiver:
\begin{center}
	\begin{tikzpicture} [scale=0.8]
	\node (v1) at (0:1) {1};
	\path (60:1.732)++(1,0) node (v4) {4};
	\path (0:1.732)++(1,0) node (v2) {2};
	\path (0:1.732)++(2.732,0) node (v3) {3};
		
	\draw [->] (v2) to (v1);
	\draw [->] (v1) to (v4);
	\draw [->] (v4) to (v2);
	\draw [->] (v2) to (v3);
	\end{tikzpicture}
\end{center}
Let $Q$ be the full linear subquiver $1 \leftarrow 2 \to 3$, and correspondingly $\bfa_Q=(1,1,1,0)$. 

There are 5 GCS ${\bf s}=(s_1,s_2,s_3)$:
$$(0,0,0), (1,0,0),(0,0,1),(1,0,1),(1,1,1)$$

By Lemma  \ref{lem:g}, $m_0=\begin{bmatrix} 0\\-1\\0\\0\end{bmatrix}$.

Consider ${\bf s}=(0,0,0)$. By Definition \ref{df:wisi}, $\ell=3$, 
$${\bf s}^{(3)}=(0,0,0),\; {\bf s}^{(2)}=(1,0,0), \;{\bf s}^{(1)}=(1,0,1), \;{\bf s}^{(0)}=(1,1,1), \;\;w_3=1, \;w_2=3,\; w_1=2.$$ 

By Definition \ref{df:g}, $m_0$ is as above,
$$m_1=\begin{bmatrix} -1\\-1\\-1\\1\end{bmatrix},\quad m_2=\begin{bmatrix} -1\\0\\-1\\1\end{bmatrix},\quad m_3=\begin{bmatrix} -1\\1\\-1\\0\end{bmatrix}.$$

The broken line $\gamma=\varphi({\bf s})$ described in Theorem \ref{MainThm2} has 4 domains of linearity $L_0,\dots,L_3$, bends on walls $\mathfrak{d}_2, \mathfrak{d}_3, \mathfrak{d}_1$ in that order. The direction vector of $\gamma$ on $L_i$ is $-m_i$ for $i=0,\dots,3$. The monomials attached to $L_0,\dots,L_3$ are $x_2^{-1}$, $x_1^{-1}x_2^{-1}x_3^{-1}x_4$,  $x_1^{-1}x_3^{-1}x_4$,  $x_1^{-1}x_2x_3^{-1}$, respectively. Thus ${\rm Mono }(\gamma)=x_1^{-1}x_2x_3^{-1}$. 

Follow the proof in Step 2 of \S\ref{section:proof}, we compute the coordinates of $\QQ_i$ as follows:
$$
\QQ_4=\QQ=\begin{bmatrix}q_1\\q_2\\q_3\\q_4\end{bmatrix}, \quad
\QQ_3=\QQ_4+q_1m_3=\begin{bmatrix} 0\\q_2+q_1\\q_3-q_1\\q_4\end{bmatrix}, \quad
\QQ_2=\QQ_3+(q_3-q_1)m_2=\begin{bmatrix}-q_3+q_1\\q_2+q_1\\ 0\\q_4+q_3-q_1\end{bmatrix}, \quad
$$
$$
\QQ_1=\QQ_2+(q_2+q_1)m_1=\begin{bmatrix}-q_3-q_2\\0\\q_3-q_2-q_1\\q_4+q_3+q_2\end{bmatrix}. $$
Since $q_4\gg q_3\gg q_2\gg q_1$, it is clear in this example that $\QQ_i-\QQ_{i+1}\in\mathbb{R}^+ m_i$ for all $i=1,2,3$. 
\end{example}

\section{Appendix: a bijection between $T$-paths and GCCs} \label{TPathsGCCs}
In this section, we first recall the construction of $T$-paths and the formula of cluster variables using $T$-paths as in \cite{S1}, then give a bijective proof of Theorem \ref{MainThm} via $T$-paths.

Let $P$ be a convex polygon with $\lqv+3$ vertices. 
Our initial triangulation of $P$ will consist of the set $T=\{T_1,\ldots,T_{\lqv}\} \cup \{T_{1,0},T_{1,1},T_{\lqv,0},T_{\lqv,1}\} \cup \{T_{i,i+1}:i\in[1,\lqv-1]\}$, where the first set is the set of diagonals and the last two sets are the set of boundary edges.

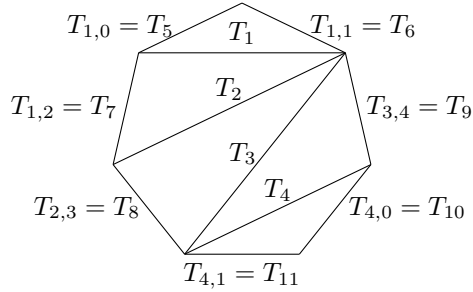
\begin{figure}[h!]
\begin{tikzpicture}[inner sep=3pt]
\tikzstyle{every node}=[font=\footnotesize]
\node[regular polygon, regular polygon sides=7, minimum size=3.5cm] at (0,0) (A) {};
\draw (A.corner 1) -- (A.corner 2) node[midway,left=3pt] {$T_{1,0}=T_5$};
\draw (A.corner 2) -- (A.corner 3) node[midway,left] {$T_{1,2}=T_7$};
\draw (A.corner 3) -- (A.corner 4) node[midway,left] {$T_{2,3}=T_8$};
\draw (A.corner 4) -- (A.corner 5) node[midway,below] {$T_{4,1}=T_{11}$};
\draw (A.corner 5) -- (A.corner 6) node[midway,right=2pt] {$T_{4,0}=T_{10}$};
\draw (A.corner 6) -- (A.corner 7) node[midway,right] {$T_{3,4}=T_9$};
\draw (A.corner 7) -- (A.corner 1) node[midway,right=3pt] {$T_{1,1}=T_6$};
\draw (A.corner 7) -- (A.corner 2) node[midway,above] {$T_1$};
\draw (A.corner 7) -- (A.corner 3) node[midway,above] {$T_2$};
\draw (A.corner 7) -- (A.corner 4) node[midway,left] {$T_3$};
\draw (A.corner 4) -- (A.corner 6) node[midway,above] {$T_4$};
\end{tikzpicture}
\caption{The initial triangulation of the quiver $(Q,Q')$ in Example \ref{QuiverW3Cycles}.}
\label{fig:triangulation example}
\end{figure}

The process of constructing the initial triangulation starts with choosing any vertex $\vf$ of $P$, labeling its two incident boundary edges as $T_{1,0}$ and $T_{1,1}$ and letting $T_1$ be the diagonal such that $T_{1,0}$, $T_{1,1}$ and $T_1$ form a triangle in the orientation as shown in Figure \ref{fig:initial triangulation} (the upper-left triangle).

Suppose that the diagonal $T_i$ ($i\in[1,\lqv-1]$) is drawn. The diagonal $T_{i+1}$ is obtained by rotating $T_i$ in the counterclockwise direction if $\delta_i=0$, in the clockwise direction if $\delta_i=1$. The boundary edge between $T_i$ and $T_{i+1}$ is labeled $T_{i,i+1}$.

When $i=\lqv$ then $T_{\lqv,1}$ is the boundary edge clockwise from $T_{\lqv}$ and $T_{\lqv,0}$ is the boundary edge counterclockwise from $T_{\lqv}$. Denote the common vertex of $T_{\lqv,0}$ and $T_{\lqv,1}$ by $\wf$.

\begin{figure}[h!]
\begin{tikzpicture}[inner sep=1pt,scale=1.3]
\coordinate (A) at (3,1);
\coordinate (B) at (0,1);
\coordinate (C) at (1.5,2);

\node [above] at (C) {$\vf$};
\draw (B) -- (C) node[midway,sloped,above] {$T_{1,0}$};
\draw (C) -- (A) node[midway,sloped,above] {$T_{1,1}$};
\draw (B) -- (A) node[midway,below=3pt] {$T_1$};

\begin{scope}[shift={(4,0)}]
\node [below] at (1.5,1) {$\wf$};
\draw (0,2) -- (3,2) node[midway,above] {$T_{\lqv}$};
\draw (0,2) -- (1.5,1) node[midway,sloped,above] {$T_{\lqv,1}$};
\draw (1.5,1) -- (3,2) node[midway,sloped,above] {$T_{\lqv,0}$};
\end{scope}

\begin{scope}[shift={(0,-2.5)}]
\coordinate (A) at (3,2);
\coordinate (B) at (1,1);
\coordinate (C) at (0,2);

\draw (C) -- (A) node[midway,above] {$T_{i}$};
\draw (C) -- (B) node[midway,left=2pt] {$T_{i,i+1}$};
\draw (B) -- (A) node[midway,below=6pt] {$T_{i+1}$};

\node at (1.5,0.25) {$\delta_i=0$};
\end{scope}

\begin{scope}[shift={(4,-2.5)}]
\coordinate (A) at (3,2);
\coordinate (B) at (0,2);
\coordinate (C) at (2,1);

\draw (B) -- (A) node[midway,above] {$T_{i}$};
\draw (B) -- (C) node[midway,below] {$T_{i+1}$};
\draw (C) -- (A) node[midway,right=6pt] {$T_{i,i+1}$};

\node at (1.5,0.25) {$\delta_i=1$};
\end{scope}
\end{tikzpicture}
\caption{Boundary edges and diagonals of an initial triangulation}
\label{fig:initial triangulation}
\end{figure}
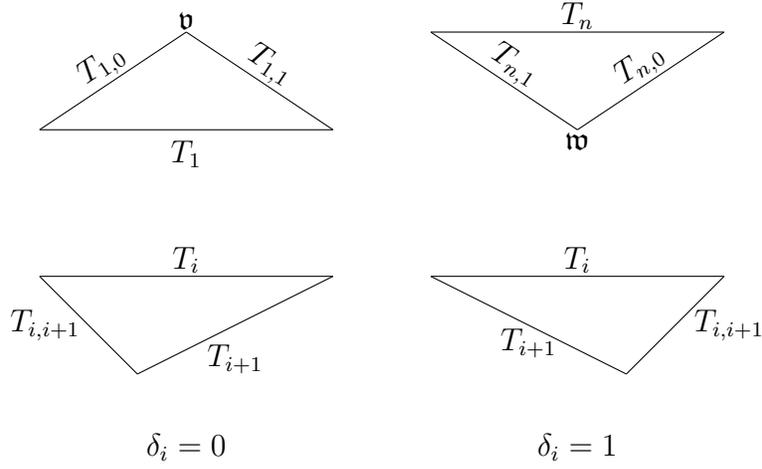

We can view both the snake diagram and the triangulation $T$ as graphs. Then there is a natural graph homomorphism $p$ between them satisfying
\begin{equation}\label{graph hom p}
p(T^{(i)}_j)=T_j,\quad p(T_{i,j})=T_{i,j}
\end{equation}
(for any $i,j$ that the equalities make sense). 
It is easy to check that the image of each vertex is uniquely determined using the requirement that a triangle in the snake diagram (with all the main diagonals added) must send to a triangle in $T$; so if the images of the three sides of a triangle are determined, the images of the three vertices are also determined (see Figures \ref{labels} and \ref{FirstLastTiles}).

In \cite{FZ2}, Fomin and Zelevinsky showed that the cluster variables of $\A(Q)$ are in bijection with the diagonals of the polygon $P$ where the set of initial cluster variables $\{x_1,\ldots,x_{\lqv}\}$ corresponds to $\{T_1,\ldots,T_{\lqv}\}$.

Let $M_{\vf,\wf}$ be the diagonal connecting $\vf$ and $\wf$, thus crossing the diagonals $T_1,\ldots,T_\lqv$. For $i\in[1,\lqv]$, let $p_i$ be the intersection of $M_{\vf,\wf}$ and $T_i$.

\begin{definition} \label{DefTPath} \cite{S1}
A $T$-path $\alpha$ from $\vf$ to $\wf$ is the sequence
$$\alpha=w_0\stackrel{T_{i_1}}{\longrightarrow}w_1\stackrel{T_{i_2}}{\longrightarrow}\cdots \stackrel{T_{i_{l(\alpha)}}}{\longrightarrow}w_{l(\alpha)}$$
such that

1) $\vf=w_0,w_1,\ldots,w_{l(\alpha)}=\wf$ are vertices of $P$.

2) $i_k\in\{0,1,\ldots,2\lqv+2\}$ such that $T_{i_k}$ connects the vertices $w_{k-1}$ and $w_k$ for each $k=1,2,\ldots,l(\alpha)$.

3) $i_j\ne i_k$ if $j\ne k$.

4) $l(\alpha)$ is odd.

5) $T_{i_k}$ crosses $M_{\vf,\wf}$ if $k$ is even.

6) If $j<k$ and both $T_{i_j}$ and $T_{i_k}$ cross $M$ then $p_{i_j}$ is closer to $\vf$ than $p_{i_k}$ is to $\vf$.
\end{definition}

Let $\P$ be the set of all $T$-path from $\vf$ to $\wf$. For any $\alpha\in\P$, let
\begin{equation}\label{xalpha}
x(\alpha)=\dis\prod_{k \text{ odd}}x_{i_k}\prod_{k \text{ even}}x_{i_k}^{-1}.
\end{equation}
Let ${\bf a}=(a_1,\dots,a_{n'})\in\{0,1\}^{n'}$ such that $a_i=1$ if and only if $i\in Q$. The following formula of the cluster variable $x[{\bf a}]$ is proved in \cite{S1}:
\begin{equation}\label{eq:T-path}
x[{\bf a}]=\sum_{\alpha\in\P} x(\alpha)
\end{equation}

\begin{definition}
We define a map $\psi_{\G,\P}:\G\to\P$ by sending  $\{S_{i,r}\}\in\G$ to the $T$-path $\alpha$ obtained by first constructing a path $\alpha'_1\alpha'_2\dots \alpha'_{2n+1}$ from $\vf$ to $\wf$ where $\alpha'_{2i}=T_{i}$ for $i\in[1,n]$,
$$\alpha'_{2i+1}=\left\{\begin{array}{lll}
T_{i}, & \text{if} & (|S_{i,1}|, |S_{i,2}|)=(\delta_i,1-\delta_i) \xh
T_{i+1}, & \text{if} & (|S_{i,1}|, |S_{i,2}|)=(1-\delta_i,\delta_i) \xh
T_{i,i+1}, & \text{if} & (|S_{i,1}|, |S_{i,2}|)=(0,0) \xh
\end{array}\right.$$
for $i\in[1,\lqv-1]$,  and

$\alpha'_1=\left\{\begin{array}{lll}
T_{1,0} & \text{if  } |S_{1,1+\delta_1}| = 1-\delta_1,\\
T_{1,1} & \text{otherwise},
\end{array}\right.$ \quad
$\alpha'_{2n+1}=\left\{\begin{array}{lll}
T_{\lqv,0} & \text{ if }  |S_{n-1,2-\delta_{n-1}}|=\delta_{n-1},\\
T_{\lqv,1} & \text{ otherwise},
\end{array}\right.$

\noindent then define $\alpha$ to be the path obtained from $\alpha'$ by canceling duplicate pairs.

We define $\psi_{\P,\G}:=\psi_{\G,\P}^{-1}: \P\to \G$. (As shown in the theorem below, $\psi_{\G,\P}$ is a bijection.) 
\end{definition}

\begin{theorem}\label{TpathsGCCs}
The maps $\psi_{\G,\P}$ is a well-defined bijection. Moreover, for $\alpha=\psi_{\G,\P}(\{S_{i,r}\})$,
$$\prod_{i=1}^n x_i^{-1}\prod_{i=0}^n y_i=x(\alpha),$$ thus $\psi_{\G,\P}$ induces a bijective proof of Theorem \ref{MainThm} using \eqref{eq:T-path}.
\end{theorem}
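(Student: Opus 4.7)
The plan is to factor $\psi_{\G,\P}$ through the bijection $\psi_{\G,\M}$ established in Theorem~\ref{PMsGCCs}, using an intermediate bijection $\Phi\colon\M\to\P$ induced by the graph homomorphism $p$ of \eqref{graph hom p}. Given a perfect matching $\gamma=\{\gamma_0,\dots,\gamma_n\}$ with $\gamma_i\in Pl^{(i)}$, define $\Phi(\gamma)$ by forming the alternating sequence $\alpha'=\alpha'_1\alpha'_2\cdots\alpha'_{2n+1}$ with $\alpha'_{2i+1}:=p(\gamma_i)$ for $0\le i\le n$ and $\alpha'_{2i}:=T_i$ for $1\le i\le n$, then canceling consecutive duplicate pairs. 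Comparing the explicit description of $\psi_{\G,\M}$ in Theorem~\ref{PMsGCCs} with the formula for $\psi_{\G,\P}$, the odd-indexed edges $p(\gamma_i)$ coincide with the $\alpha'_{2i+1}$ used to define $\psi_{\G,\P}$, so $\psi_{\G,\P}=\Phi\circ\psi_{\G,\M}$; it therefore suffices to show that $\Phi$ is a weight-preserving bijection.

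The first step is to verify that $\alpha'$ is a walk from $\vf$ to $\wf$. Because each tile $Pl^{(i)}$ maps under $p$ to a single triangle of the triangulation $T$, consecutive edges $\alpha'_{2i}=T_i$, $\alpha'_{2i+1}=p(\gamma_i)$, $\alpha'_{2i+2}=T_{i+1}$ lie in two adjacent triangles that share the diagonal $T_{i,i+1}$, so their endpoints match up (the boundary cases $i=0$ and $i=n$ are similar, using the vertices $\vf$ and $\wf$). Once $\alpha'$ is known to be a walk, passing to $\alpha$ by canceling consecutive duplicate pairs produces a reduced walk; conditions (1), (2), (4), (5) of Definition~\ref{DefTPath} are then immediate from the construction, condition (6) holds because the surviving $T_i$'s preserve their left-to-right order along $M_{\vf,\wf}$, and condition (3) holds because duplicates have been removed and the $\alpha'_{2i}$'s are pairwise distinct.

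For bijectivity I would construct the inverse $\Phi^{-1}\colon\P\to\M$ directly. Given $\alpha\in\P$, identify the missing diagonals in $\{T_1,\dots,T_n\}$ and re-insert each together with a duplicate copy, padding $\alpha$ back to length $2n+1$; the side of insertion is determined uniquely by requiring $\alpha'_{2i+1}\in p(Pl^{(i)})$, since of the three possible edges in $p(Pl^{(i)})$, only one pair of adjacent positions in $\alpha$ fits the triangle containing $T_i$ and its neighbors. Once $\alpha'$ is reconstructed, set $\gamma_i:=(p|_{Pl^{(i)}})^{-1}(\alpha'_{2i+1})$, which is unambiguous since $p|_{Pl^{(i)}}$ is injective. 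A parallel check confirms $\gamma$ is a perfect matching and that $\Phi^{-1}\circ\Phi=\id$ and $\Phi\circ\Phi^{-1}=\id$.

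Finally, weight preservation is nearly automatic. Before cancellation, using $y_i=w(\gamma_i)=x_{p(\gamma_i)}$ (combining \eqref{weight w}, \eqref{graph hom p}, and the identification of $y_i$ with $w(\gamma_i)$ in Theorem~\ref{PMsGCCs}), we have $\prod_{i=1}^n x_i^{-1}\prod_{i=0}^n y_i=\prod_{k\text{ even in }\alpha'}x_{i_k}^{-1}\cdot\prod_{k\text{ odd in }\alpha'}x_{i_k}$; each canceled duplicate pair contributes a factor $x_{i_k}/x_{i_k}=1$, so the identity persists after cancellation and equals $x(\alpha)$ by \eqref{xalpha}. Combined with Schiffler's formula \eqref{eq:T-path}, this yields the desired bijective proof of Theorem~\ref{MainThm}. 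The main technical obstacle I anticipate is carefully verifying that the re-insertion step in $\Phi^{-1}$ is unambiguous, especially at the boundary tiles $Pl^{(0)}$ and $Pl^{(n)}$ whose triangles involve $T_{1,0},T_{1,1}$ and $T_{n,0},T_{n,1}$ rather than interior diagonals; this requires a case-by-case check on how the triangles of $T$ are laid out along the path $M_{\vf,\wf}$.
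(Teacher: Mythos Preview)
Your approach is essentially the same as the paper's: the paper also factors $\psi_{\G,\P}$ through $\psi_{\G,\M}$, and your map $\Phi$ is exactly the composition of the paper's ``folding map'' $\L:\M\to\{\text{complete $T$-paths}\}$ (your $\alpha'$ is precisely what the paper calls a complete $T$-path, citing \cite{MS}) with the cancellation map $\pi$ to $\P$. The only organizational difference is that the paper treats complete $T$-paths as a named intermediate category with its own axioms and gives $\pi^{-1}$ via the total edge order \eqref{edge order} rather than via the condition $\alpha'_{2i+1}\in p(Pl^{(i)})$, but the content and the weight-preservation argument are identical.
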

\begin{proof}
In order to prove Theorem \ref{TpathsGCCs}, we shall show that all maps below are bijective, and that  their composition is $\psi_{\G,\P}$:
$$\G \stackrel{\psi_{\G,\M}}{\longrightarrow} \M\stackrel{\L}{\longrightarrow} \{\textrm{complete $T$-paths from $\vf$ to $\wf$}\}\stackrel{\pi}{\longrightarrow} \P.$$

(i) We first define $\L$, which is exactly the folding map in \cite[\S4.3]{MS}. As defined in \cite{MS, S2}, a complete $T$-path $\alpha$ from $\vf$ to $\wf$ is similar to a $T$-path from $\vf$ to $\wf$ defined in Definition \ref{DefTPath}, in the sense that we require 1), 2), and
\begin{enumerate} \itemsep=5pt
\item[5')] the $2j$-th edge $T_{i_{2j}}=T_{j}$ (i.e., $i_{2j}=j$),
\item[6')] $T_{i_1}\le T_{i_2}\le \cdots$,
\end{enumerate}
where we use the order
\begin{equation}\label{edge order}
T_{1,0}<T_{1,1}<T_1<T_{1,2}<T_2<T_{2,3}<\cdots<T_n<T_{n,0}<T_{n,1}.
\end{equation}
Note that we do not require edges in $\alpha$ to be distinct. It is easy to see that a complete $T$-path has length $2n+1$.
For simplicity, we denote $\alpha$ using its edge sequence.
For $\gamma\in \M$, we define (recall that $p$ is defined in \eqref{graph hom p}):
\begin{equation}\label{Lgamma}
\L(\gamma)=L_1L_2\cdots L_{2\lqv+1},\quad  \textrm{  where
$L_{2j}=T_j$ for $j\in [1,n]$,\;  $L_{2j+1}=p(\gamma_j)$ for $j\in [0,n]$.}
\end{equation}
Note that the starting point of each $L_i$ is determined by $L_1\cdots L_{i-1}$.
The union of a perfect matching $\gamma$ with all the diagonals of tiles form a path $\alpha'_\gamma$ in the snake diagram.  If we consider the quotient map from the snake diagram to the triangulation of $P$, by identifying the diagonal edge $i$ with $T^{(i)}$ and identifying diagonal edge $i+1$ with $T^{(i)}_{i+1}$, then the image of $\alpha'_\gamma$ is the complete $T$-path $\L(\gamma)$.

(ii) We show that $\L$ has a well-defined inverse map $\L^{-1}$ (which is the unfolding map in \cite[\S4.5]{MS}), thus $\L$ is bijective. Indeed, $\L^{-1}$ sends a complete $T$-path $\theta=L_1\cdots L_{2n+1}$ to $\gamma=\{\gamma_1, \gamma_2, \ldots, \gamma_{\lqv}\}$, where
$\gamma_j$ is the unique edge in $Pl^{(j)}\cap p^{-1}(L_{2j+1})$, that is,
$\gamma_1=L_1$, $\gamma_\lqv=L_{2\lqv+1}$, and
$\gamma_j=T_{j}^{(j)}$ (resp. $T_{j+1}^{(j)}$, $T_{j,j+1}$) if  $L_{2j+1}$ is $T_j$ (resp. $T_{j+1}$, $T_{j,j+1}$) for $j\in[1,\lqv-1]$.

Next we show that $\gamma$ is indeed a perfect matching, it suffices to prove that the edges in $\gamma$ are disjoint, because it has the correct number ($=n+1$) of edges.
	
For $j, j'\in[0,n-2]$ with $j<j'$, $\gamma_j$ and $\gamma_{j'}$ are disjoint if $j'>j+1$ because $Pl^{(j)}$ and $PL^{(j')}$ are disjoint.  So we assume $j'=j+1$. We shall only discuss the case $\delta_j=\delta_{j+1}=0$ since other cases can be proved similarly. 	
\begin{figure}[h!]
\begin{tikzpicture}[MyPic]
\tikzstyle{every node}=[font=\small]
\draw [SoThick] (1,3) -- (4,1.5) node[midway,above=2pt] {$T_j$};
\draw [SoThick] (0,1.5) -- (4,1.5) node[midway,above] {$T_{j+1}$};
\draw [SoThick] (1,0) -- (4,1.5) node[midway,below=5pt] {$T_{j+2}$};
		
\draw (0,1.5) -- (1,3) node[midway,left] {$T_{j,j+1}$};
\draw (0,1.5) -- (1,0) node[midway,left] {$T_{j+1,j+2}$};

\begin{scope}[scale=1.6,shift={(4,0)}]
\tikzstyle{every node}=[font=\tiny]
\tsomepoints
		
\draw (P00) -- (P10);
\draw (P11) -- (P10) node[midway,above,sloped] {$T_{j,j+1}$};
\draw (P11) -- (P01) node[midway,above] {$T_{j+1}^{(j)}$};
\draw (P01) -- (P00);
\node at (0.5,0.5) [fill=lightgray] {$j$};
	
\draw (P10) -- (P20) node[midway,below] {$T_j^{(j)}$};
\draw (P20) -- (P21) node[midway,right] {$T_{j+2}^{(j+1)}$};
\draw (P21) -- (P11) node[midway,above] {$T_{j+1,j+2}$};
\node at (1.5,0.5) [fill=lightgray] {$j+1$};
		
\draw (P21) -- (P22);
\draw (P22) -- (P12);
\draw (P12) -- (P11) node[midway,left,near start] {$T_{j+1}^{(j+1)}$};
\node at (1.5,1.5) [fill=lightgray] {$j+2$};
\end{scope}
\end{tikzpicture}
\caption{Parts of the polygon and the snake diagram corresponding to the subquiver $j\to j+1\to j+2$}
\label{subpaths}
\end{figure}
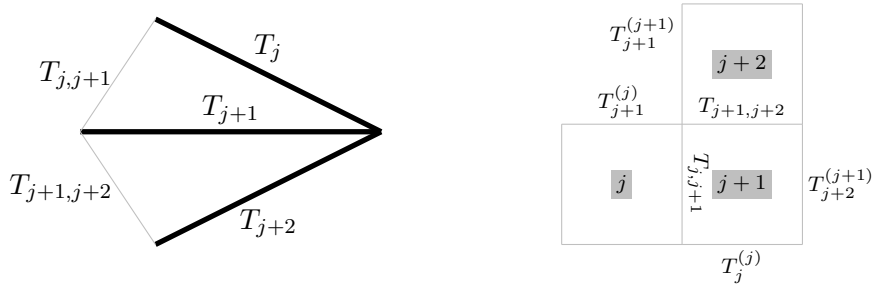
The subpath $L_{2j+1}L_{2j+2}L_{2j+3}L_{2j+4}$ of $\L(\alpha)$ is one of the following:
$$T_jT_{j+1}T_{j+1}T_{j+2}, \;
T_jT_{j+1}T_{j+1,j+2}T_{j+2},\;
T_{j,j+1}T_{j+1}T_{j+2}T_{j+2},\;
T_{j+1}T_{j+1}T_{j+2}T_{j+2}.$$
By looking at Figure \ref{subpaths}, we see that $\gamma_j$ and $\gamma_{j+1}$ are disjoint in each case.
	
(iii) We show that $\pi$ is bijective by giving its inverse $\pi^{-1}$. Suppose that $\alpha=T_{i_1}T_{i_2}\cdots T_{i_{l(\alpha)}}$ is a $T$-path from $\vf$ to $\wf$. If $n=1$, then $\alpha$ is already a complete $T$-path, so we define $\pi^{-1}(\alpha)=\alpha$. Now assume $n>1$. The sequence $\pi^{-1}(\alpha)=L=L_1L_2\cdots L_{2n+1}$ is obtained as a result of the following algorithm.
\vspace{-5pt}
\begin{enumerate} \itemsep=5pt
\item Initialize $L:=\alpha$.

\item Let $j$ run from $1$ to $n$: if $L_{2j}\neq T_j$, then insert $T_jT_j$ to $L$ so that the resulting $L$ is nondecreasing with the order given in \eqref{edge order}.

\item Define $\pi^{-1}(\alpha):=L$.
\end{enumerate}
We claim that $L$ is a complete $T$-path. Conditions 1) 2) 6') are obviously satisfied, and  5') can be proved by induction.

Combining (i)(ii)(iii) and Theorem \ref{PMsGCCs}, we have proved that $\psi_{\G,\P}$ is bijective.

Finally, we show that $\prod_{i=1}^n x_i^{-1}\prod_{i=0}^n y_i=x(\alpha)$. By the construction of $\pi^{-1}(\alpha)$ in (iii), $x(\alpha)$ remains unchanged if we replace $\alpha$ by the complete $T$-path $\pi^{-1}(\alpha)=T_{i_1}T_{i_2}\cdots T_{i_{2n+1}}$; this is because each time we insert the pair $T_jT_j$, the extra contribution to the product \eqref{xalpha} is $x_jx_j^{-1}=1$. So it suffices to show
$$\prod_{i=1}^n x_i^{-1}\prod_{i=0}^n y_i=\prod_{k \text{ even}}x_{i_k}^{-1}\prod_{k \text{ odd}}x_{i_k}.$$
By 5'), $\prod_{k \text{ even}}x_{i_k}^{-1}=\prod_{i=1}^n x_i^{-1}$, so it suffices to show
$\prod_{i=0}^n y_i=\prod_{k \text{ odd}}x_{i_k}$, or to show that $y_j=x_{i_{2j+1}}$ for $j\in [0,n]$.
Indeed, $T_{i_{2j+1}}=L_{2j+1}=p(\gamma_j)$ by \eqref{Lgamma}, thus $x_{i_{2j+1}}=w(\gamma_j)$ by the definition of the weight $w$ in \eqref{weight w}. Moreover, Theorem \ref{PMsGCCs} asserts that $w(\gamma_j)=y_j$. Thus $y_j=x_{i_{2j+1}}$.
\end{proof}

\begin{example} With the $T$-path $\alpha=T_5T_1T_9T_4T_{11}$, the complete $T$-path is $\pi^{-1}(\alpha)=T_5T_1T_2T_2T_3T_3T_9T_4T_{11}$. Then $\psi_{\P,\G}(\alpha)=((1,0),(1,0),(0,0))$ as you can see in Figure \ref{mapPhiPG}.
	
\begin{figure}[h!]
\begin{tikzpicture}[MyPic][scale=1.2]
\tikzstyle{every node}=[font=\scriptsize]

\node[regular polygon, regular polygon sides=7, minimum size=3cm] at (0,0) (A) {};
\draw (A.corner 1) -- (A.corner 2) node[midway,above,inner sep=3pt] {$T_5$};
\draw (A.corner 2) -- (A.corner 3) node[midway,left] {$T_7$};
\draw (A.corner 3) -- (A.corner 4) node[midway,left] {$T_8$};
\draw (A.corner 4) -- (A.corner 5) node[midway,below] {$T_{11}$};
\draw (A.corner 5) -- (A.corner 6) node[midway,right=2pt] {$T_{10}$};
\draw (A.corner 6) -- (A.corner 7) node[midway,right] {$T_9$};
\draw (A.corner 7) -- (A.corner 1) node[midway,above] {$T_6$};
\draw (A.corner 7) -- (A.corner 2) node[midway,above=0pt] {$T_1$};
\draw (A.corner 7) -- (A.corner 3) node[midway,above] {$T_2$};
\draw (A.corner 7) -- (A.corner 4) node[midway,left] {$T_3$};
\draw (A.corner 4) -- (A.corner 6) node[midway,above,inner sep=3pt] {$T_4$};

\draw [SoThick] (A.corner 1) -- (A.corner 2);
\draw [SoThick] (A.corner 2) -- (A.corner 7);
\draw [SoThick] (A.corner 7) -- (A.corner 6);
\draw [SoThick] (A.corner 6) -- (A.corner 4);
\draw [SoThick] (A.corner 4) -- (A.corner 5);

\begin{scope}[shift={(3,0)}]
\node (v1) at (0:0) {};
\node (v2) at (0:2) {};

\draw [|->,red] (v1) -- (v2) node[midway,above,font=\normalsize] {$\psi_{\P,\G}$};
\end{scope}

\begin{scope}[shift={(5.7,-0.5)}]
\tsomepoints
\tikzstyle{every node}=[font=\normalsize]

\draw [SoThick] (P00) -- (P10) node[midway,below] {$x_2$};
\draw (P10) -- (P11) node[midway,right] {$x_1$};

\draw [SoThick] (P20) -- (P30) node[midway,below] {$x_3$};
\draw (P30) -- (P31) node[midway,right] {$x_2$};

\draw (P40) -- (P41) node[midway,right] {$x_4$};
\draw (P40) -- (P50) node[midway,below] {$x_3$};	
\end{scope}
\end{tikzpicture}
\caption{An example of the map $\psi_{\P,\G}$}
\label{mapPhiPG}
\end{figure}
\end{example}

\vspace{2cm}

\end{document}